\newcommand{\R}{\mathbb{R}}
\newcommand{\N}{\mathbb{N}}
\newcommand{\C}{\mathbb{C}}
\newcommand{\Z}{\mathbb{Z}}
\newcommand{\Tor}{\mathbb{T}}
\newcommand{\T}{\mathcal{T}}
\newcommand{\X}{\mathcal{X}}
\newcommand{\A}{\mathcal{A}}
\newcommand{\bPhi}{\bm{\varPhi}}
\newcommand{\calH}{\mathcal{H}}
\newcommand{\calF}{\mathcal{F}}
\newcommand{\calD}{\mathcal{D}}
\newcommand{\whG}{\widehat{\Gamma}}
\newcommand{\G}{\Gamma}
\newcommand{\W}{\Omega}
\newcommand{\w}{\omega}
\newcommand{\g}{\gamma}
\newcommand{\al}{\alpha}
\newcommand{\supp}{\textrm{supp}}
\newtheorem{theorem}{Theorem}[section]
\newtheorem{lemma}[theorem]{Lemma}
\newtheorem{corollary}[theorem]{Corollary}
\newtheorem{proposition}[theorem]{Proposition}
\def\clspan{{\overline{\mathrm{span}}}}
\newcommand{\wh}[1]{\widehat{#1}}
\newcommand{\esssup}{\mathop{\rm ess\,sup}}
\theoremstyle{remark}
\newtheorem{remark}[theorem]{Remark}
\newtheorem{example}[theorem]{Example}
\theoremstyle{definition}
\newtheorem{definition}[theorem]{Definition}
\subjclass[2010]{Primary 43A65, 43A70; Secondary 42C15, 42C40}
\keywords{shift-invariant spaces,  groups actions, range functions,  Riesz bases, frames, Zak transform, LCA groups}
\title[The Zak transform and the structure of $(\G,\sigma)$-spaces]{The Zak transform and the structure of spaces invariant by the action of an LCA group}
\author{D. Barbieri, E. Hern\'andez, V. Paternostro}
\begin{document}
\address{\textrm{(D. Barbieri)}
Universidad Aut\'onoma
de Madrid, 28049 Madrid, Spain}
\email{davide.barbieri@uam.es}

\address{\textrm{(E. Hern\'andez)}
Universidad Aut\'onoma
de Madrid, 28049 Madrid, Spain}
\email{eugenio.hernandez@uam.es}

\address{\textrm{(V. Paternostro)}
Departamento de Matem\'atica, Facultad de Ciencias Exactas y Naturales, Universidad  de Buenos Aires, Argentina and IMAS-CONICET, Consejo  Nacional de Investigaciones Cient\'ificas y T\'ecnicas, Argentina}
\email{vpater@dm.uba.ar}

\begin{abstract}
We study closed subspaces of $L^2(\X)$, where $(\X, \mu)$ is a $\sigma$-finite measure space, that are invariant under the unitary representation associated to a measurable  action of a discrete countable LCA group $\G$ on $\X$.
We provide a complete description  for these spaces in terms of range functions and a suitable generalized Zak transform. As an application of our main result, we prove a characterization of frames and Riesz sequences in $L^2(\X)$ generated by the action of the unitary representation under consideration  on a countable set of functions in $L^2(\X)$.
Finally, closed subspaces of $L^2(G)$, for $G$ being an LCA group, that are invariant under translations by elements on a closed subgroup $\G$ of $G$ are studied and characterized. The results we obtain for this case are applicable to  cases where  those already proven in \cite{BR14, CP10} are not.
\end{abstract}

\maketitle

\section{Introduction}

Closed spaces that are invariant under a certain kind of unitary
operators are of  particular
importance in several areas of Harmonic Analysis such as  wavelets,
spline systems, Gabor systems and approximation theory
\cite{AG01,Gro01,HW96, Jan98, Mal89}. As a particular case of these
spaces we have the so-called {\it shift-invariant spaces} which are
closed subspaces of $L^2(\R^n)$ invariant under all integer
translations. The structure of shift-invariant spaces has been
intensively studied in \cite{Bow00, dBDVR94a, dBDVR94b, RS95, RS97}.
Recently, shift-invariant spaces have been considered in the context
of locally compact abelian (LCA) groups. That is,  when $G$ is an
LCA group, a shift-invariant space is a closed subspace of  $L^2(G)$
that is invariant under translations by elements of a closed
subgroup $\Gamma$ of $G$. The case when  $\Gamma$ is a discrete
countable subgroup of $G$ with $G/\Gamma$ compact has been treated
in \cite{CP10, KR10}, and extended to the case of $\Gamma$ closed
and $G/\Gamma$ compact in \cite{BR14}.

The aim of this paper is twofold. First of all, we will give a
comprehensive description of the structure of closed spaces that are
invariant by a class of operators that generalize translations, namely unitary representations of
discrete groups arising from  measurable group actions on measure
spaces. Secondly, we will see how a similar  approach allows
us to treat translations by elements of a closed
subgroup without any other assumption on it.  Both cases
are treated with a similar approach by using a generalized Zak
transform, properly adapted to each case.

More precisely, in the first part of the paper we will consider, when $(\X, \mu)$ is a  $\sigma$-finite
measure space, closed subspaces of $L^2(\X)$  invariant under the unitary representation associated to a measurable  action of a discrete countable LCA group
$\Gamma$ on $\X$. A measurable  action of $\Gamma$ on $\X$ is a measurable map $\sigma : \Gamma\times \X \rightarrow \X$ that satisfies
$\sigma_\gamma (\sigma_{\gamma'} (x)) = \sigma_{\gamma + \gamma' }(x)$ for all $\gamma, \gamma' \in \Gamma$, all $x \in \X$ and
$\sigma_e (x) = x$ for all $x\in \X,$ where $\sigma_\gamma : \X \rightarrow \X$ is given by $\sigma_\gamma(x) = \sigma (\gamma, x).$
The action is said to be quasi-$\Gamma$-invariant if there exists a measurable function $J_\sigma : \Gamma \times \X \rightarrow \R^+$
such that $d\mu(\sigma_\gamma(x))= J_\sigma(\gamma, x) d\mu(x).$ To each quasi-$\Gamma$-invariant action $\sigma$ we can
associate a unitary representation $\Pi_{\sigma}$ of $\Gamma$ on $L^2(\X)$ given by
\begin{equation} \label{1.1}
\Pi_{\sigma}(\g)f(x)=J_{\sigma}(-\g,x)^{\frac1{2}}f(\sigma_{-\g}(x)).
\end{equation}
It is with respect to this unitary representation that we consider invariant subspaces of $L^2(\X).$ We say that a closed subspace
$V$ of $L^2(\X)$ is $(\Gamma, \sigma)$-invariant if whenever $f\in V$ we have that $\Pi_\sigma(\gamma) f \in V,\, \forall\,\g\in\G.$

This setting includes the case of shift-invariant subspaces of $L^2(\R^n)$ treated in \cite{Bow00, dBDVR94a, dBDVR94b, RS95, RS97} as well as those considered  in the context of LCA groups \cite{CP10, KR10}.
But moreover, it includes other classical operators
from Analysis such us the dilation operator used in wavelet theory, which  arises  from the dilation action $\sigma : \Z \times \R^n \rightarrow \R^n$
given by $\sigma(j,x) = 2^jx, j\in \Z, x\in \R^n.$ More examples are given in Section 2 (see Example 2.1).

The main issue that arises when trying to address this problem can
be understood by noting that the previous papers where the structure
of shift-invariant spaces is established, such as \cite{Bow00,
CP10}, use heavily the Fourier transform either in $\R^n$ or in the
LCA group $G$, as well as in the subgroup of translations $\Gamma$.
But in our situation, $L^2(\X)$ with $(\X, \mu)$ a measure space,
such a tool is not available. The novelty of our approach is that we
work with a version of the Zak transform adapted to our setting.

 The classical Zak transform is an isometric isomorphism
from $L^2(\R^n)$ onto $L^2(\mathbb T^{2n})$ given, for $f\in
L^2(\R^n)$, by
$$ Zf(x,\xi) = \sum_{k\in \Z^n} f(x+k) e^{-2\pi i k \xi}\,, \qquad
(x,\xi)\in \mathbb T^{2n}\,.$$ The history of the Zak transform is
well described in Chapter 8 of \cite{Gro01}: \textit{The Zak
transform was first introduced and used by Gelfand \cite{Gel50} for
a problem in differential equations. Weil \cite{Wei64} defined this
transform on arbitrary locally compact abelian groups with respect
to arbitrary closed subgroups. \dots Subsequently, the Zak transform
was rediscovered several times, notably by Zak \cite{Zak67, Zak68}
for a problem in solid state physics and Brezin \cite{Bre70} for
differential equations. In representation theory and in abstract
harmonic analysis $Z$ is often called the Weil-Brezin map, but in
applied mathematics and signal analysis it has become customary to
refer to $Z$ as the Zak transform. \dots The popularity of the Zak
transform in engineering seems largely due to Janssen's influential
survey article \cite{Jan88}.} 

 Properties of the Zak transform, as well as
applications in pure and applied mathematics can be found in
\cite{Jan82, Jan88}, \cite[Chapter 8]{Gro01}, \cite{Gla01, Gla04,
HSWW10, HSWW11, Wil11}.

In our setting, the generalized Zak transform is defined  in the following way: for $\psi \in L^2(\X)$,
$\al\in\widehat{\G}$ and $x\in\X$,
$$Z_{\sigma}[\psi](\alpha)(x):=\sum_{\g\in\Gamma}
 \left[(\Pi_{\sigma}(\g)\psi)(x)\right]X_{-\g}(\al),$$
 where  $X_{\g}$ denotes the character on $\widehat \Gamma$ associated to $\gamma \in \Gamma.$

The importance of such generalized Zak transform in the study of invariant subspaces of $L^2(\X)$ has been pointed out in
\cite{HSWW10} where it is used to prove results concerning properties of sequences generated by a single
function $\psi \in L^2(\X)$ under the representation  $\Pi_\sigma$ given by (\ref{1.1}).

Our main result characterizes  $(\Gamma,\sigma)$-invariant subspaces of $L^2(\X)$, for  $\sigma$-finite measure space $\X$, in terms
of range functions and the generalized Zak transform. As a consequence, we are able to characterize frames and Riesz sequences in $L^2(\X)$ generated by the action
of the unitary representation given by (\ref{1.1}) on a countable (meaning countable or finite) family of elements in $L^2(\X)$. As easy corollaries of these characterizations we deduce the results obtained  in
\cite{HSWW10} concerning characterizations of cyclic sequences generated by
a single function in terms of the so-called bracket map. In addition, we also show that every $(\Gamma,\sigma)$-invariant subspace of $L^2(\X)$ can be decomposed into an orthogonal sum of $(\Gamma,\sigma)$-invariant subspaces, each of them  generated by a single function whose $\Pi_\sigma$-orbit is a Parseval frame for the space it spans.

In the second part of the paper, we will then focus on   subspaces
of $L^2(G)$, with  $G$ an LCA group, that are invariant under
translations by elements of a closed
subgroup $\G$ of $G$, and still address this problem in terms of a
suitable Zak transform. The subgroup $\G$ is allowed to be
non-discrete and hence the situation does not fit directly in the
setting previously considered in this paper. For this reason, we need
to work with a non-discrete version of the Zak transform and deal
with some technical issues. As a result, also in this case we can
characterize the structure of these spaces in terms of range
functions but now using the non-discrete version of the Zak
transform. Since we  ask $\G$ to be just closed -- not discrete nor
co-compact -- our results include more cases than those covered by
\cite{BR14, CP10, KR10}.

The paper is organized as follows. Section \ref{sec:preliminaries}
is devoted to give precise definitions of the basic objects involved
in the study, while Section \ref{sec-Zak} contains the main
properties of the Zak transform for discrete LCA groups. The main
result, where we  characterize  $(\Gamma, \sigma)$-invariant
subspaces of $L^2(\X)$ in terms of range functions and the
generalized Zak transform is given in Section \ref{sec:main}.  The
proof of it uses the characterization of multiplicative invariant
subspaces developed in \cite{BR14},  which is summarized, for the
readers convenience, in Subsection \ref{sec-MI}. Applications of
this result are given in Section \ref{sec:frame-riesz} where we
characterize frames and Riesz sequences in $L^2(\X)$ generated by
the action of $\Pi_\sigma$ on a countable set of $L^2(\X)$. Finally,
in Section \ref{sec:TI-spaces} we consider the case of an LCA second
countable group $G$ and provide a characterization of subspaces of
$L^2(G)$ that are invariant under translations by elements
of a closed subgroup $\G$ of $G$.

{\bf Note}: Once this manuscript was submitted for publication we learned of the paper by J. Iverson \cite{Ive14} where similar results are proved independently. In \cite{Ive14}, the results we  prove in Section 6 for closed subspaces of $L^2(G)$ invariant under translations of by a closed subgroup $\G$ of an LCA group $G$, are extended using similar techniques, to the case  of closed abelian  subgroups $\G$ of a locally compact group $G$. On the other hand, in Sections 4 and 5 we obtain results for closed subspaces of a general $L^2(\X)$ space invariant under the action of a discrete LCA group $\G$, which are not included in \cite{Ive14}.  
Also we learned of the paper by S. Saliani \cite{Sal14} on linear independence in spaces invariant under the action of LCA groups, which appeared almost in the same days and contains  ideas on similar topics. 

\

{\bf Acknowledgement}: D. Barbieri was supported by a Marie Curie Intra European Fellowship (Prop. N. 626055) within the 7th European Community Framework Programme.
 D. Barbieri and E. Hern\'andez were supported by Grant MTM2013-40945-P (Ministerio de Econom\'ia y Competitividad, Spain).
 E. Hern\'andez was supported by Grant MTM2010-16518 (Ministerio de Econom\'ia y Competitividad, Spain) 
 and
 V. Paternostro  was supported by a fellowship for postdoctoral researchers
from the Alexander von Humboldt Foundation.\\
The authors want to thank the referee for her/his comments which helped us to improve the manuscript. 

\

\

\section{Preliminaries and notation}\label{sec:preliminaries}

\subsection{Basic on LCA groups}\label{sec:LCA-basics}
Let $\Gamma$   be a locally compact abelian (LCA) group  with operation written additively.
By $\widehat{\G}$ we denote  the dual group of $\G$, that is, the set of continuous characters on $\G$. For $\g\in \G$ and $\al\in\widehat{\G}$ we use the notation $(\g,\al)$ for the complex value that $\al$ takes at  $\gamma$.
For each $\g\in\G$, the corresponding character on $\widehat{\G}$ is denoted by $X_\g$, where  $X_\g:\widehat{\G}\to\C$ and  $X_\g(\al)=(\g,\al)$.

We denote  the Haar measures on $\G$ and $\widehat{\G}$ by $m_\G$ and $m_{\widehat{\G}}$  respectively.
The Fourier transform of a Haar integrable function $f$ on $\G$, is the function
$\widehat{f}$  on $\widehat{\G}$ defined by $\widehat{f}(\al)=\int_\G f(\g)(-\g,\al)\,dm_\G(\g)$.
We fix the Haar measures $m_\G$ and $m_{\widehat{\G}}$  such that the inversion formula holds (for details see \cite[Section 1.5]{Rud62}). Therefore, the Fourier transform on $L^1(\G)\cap L^2(\G)$ extends to a unitary operator from $L^2(\G)$ to $L^2(\widehat{\G})$ that we  denote equally either  by $\wedge$ or by $\mathcal{F}_\G$.

The Fourier transform satisfies the following well known property: for any $\g\in\G$,
\begin{equation}\label{eq:translates-fourier}
\widehat{f(\cdot-\g)}=(-\g, \cdot)\widehat{f(\cdot)}.
\end{equation}
As usual, when $\G$ is discrete, we choose $m_\G$ to be counting measure. Then, the inversion formula holds with $m_{\widehat{\G}}$ normalized such that $m_{\widehat{\G}}(\widehat{\G} )=1$. With this normalization of the Haar measure and supposing that $\G$ is countable, $\{X_\g\}_{\g\in\G}$ turns out to be an orthonormal basis for $L^2(\whG)$.

\subsection{LCA groups acting on $\sigma$-finite  measure spaces}\label{sec-action}
We now state the precise definition of the actions we shall work with. This type of actions, called {\it quasi-$\G$-invariant} actions,  were previously considered in \cite{HSWW10} within the abelian setting and
in \cite{BHP14} for countable discrete groups that can be non-abelian.
Fix $\G$ a countable discrete LCA group.
Let $(\X,\mu)$ be  a $\sigma$-finite measure space and $\sigma:\Gamma\times\X\to\X$ an  action satisfying the following conditions:
\begin{enumerate}
 \item [(i)] for each $\g\in\Gamma$ the map $\sigma_{\g}:\X\to\X$ given by $\sigma_{\g}(x):=\sigma(\g,x)$ is measurable;
\item [(ii)] $\sigma_{\g}(\sigma_{\g'}(x))=\sigma_{\g+\g'}(x)$, for all $\g,\g'\in\Gamma$ and for all $x\in\X$;
\item [(iii)] $\sigma_{e}(x)=x$ for all $x\in\X$, where $e$ is the identity of $\Gamma$.
\end{enumerate}
As $\G$ is discrete and countable,  $\sigma:\Gamma\times\X\to\X$ turns out to be a measurable action,
since if $E\subseteq \X$ is measurable set, then $\sigma^{-1}(E)=\bigcup_{\g\in\G}[\{\g\}\times\sigma_\g^{-1}(E)]$
is measurable in $\Gamma \times \X$.

The action $\sigma$ is said to be {\it quasi-$\G$-invariant} if there exists  a measurable positive function $J_{\sigma}:\Gamma\times\X\to\R^{+}$, called {\it Jacobian of $\sigma$}, such that
$d\mu(\sigma_{\g}(x))=J_{\sigma}(\g,x)d\mu(x).$ As a consequence of  the properties of $\sigma$  and the definition of the Jacobian  we have:
\begin{equation}\label{jacobian}
 J_{\sigma}(\g_1+\g_2,x)=J_{\sigma}(\g_1,\sigma_{\g_2}(x))J_{\sigma}(\g_2,x), \quad\forall \,\g_1,\g_2\in\G, \,\, x\in\X.
\end{equation}

To each quasi-$\G$-invariant action $\sigma$ we can associate a unitary representation $\Pi_{\sigma}$ of $\Gamma$ on $L^2(\X)$ given by
$$\Pi_{\sigma}(\g)\psi(x)=J_{\sigma}(-\g,x)^{\frac1{2}}\psi(\sigma_{-\g}(x)).$$
From \eqref{jacobian} it follows that $\Pi_\sigma$ is a unitary operator. To prove that $\Pi_\sigma$ is a representation see \cite[Section 4.2]{BHP14}.

The action $\sigma$ has the {\it tiling property } if there exists a measurable set $C\subseteq \X$ such that
$\mu(\sigma_\g(C)\cap\sigma_{\g'}(C))=0$ if $\g\neq\g'$ and $\mu(\X\setminus\bigcup_{\g\in\G}\sigma_\g(C))=0$.

We provide here relevant examples of concrete  quasi-$\G$-invariant actions satisfying the tiling property. In particular, they include the usual operators used in Analysis such us translations, dilations and those arising from the action of the shear matrix. Note that, the tiling set $C$ necessarily has positive measure but,  as the examples show, it  can  have either finite or infinite measure.
\begin{example}\noindent
 \begin{itemize}
  \item Let us consider the group $\Z$ acting on $\R^2$ by
  $$\sigma_1(k, (x,y))=\begin{pmatrix}
                        1&0\\
                        k&1
                    \end{pmatrix}\begin{pmatrix}
                      x\\
                      y
                    \end{pmatrix}=(x, kx+y).$$
The action $\sigma_1$ satisfies conditions i-iii and $J_1(k, (x,y))=1$. Moreover, $\sigma_1$ has the tiling property with  tiling set $C=\{(x,y)\in\R^2: 0\leq y\leq x\}\cup\{(x,y)\in\R^2: x\leq y\leq 0\}$.
The associated unitary representation on $L^2(\R^2)$ is $\Pi_{\sigma_1}(k)\psi(x,y)=\psi(x, -kx+y)$.

\item A different action of $\Z$ on $\R^2$ is given by
$$\sigma_2(j, (x,y))=\begin{pmatrix}
                        2^j&0\\
                        0&2^{j/2}
                    \end{pmatrix}\begin{pmatrix}
                      x\\
                      y
                    \end{pmatrix}=(2^jx, 2^{j/2}y).$$
Also in this case $\sigma_2$ satisfies conditions i-iii but we have that $J_2(j, (x,y))=2^{3j/2}$. A tiling set for $\sigma_2$ is $C=C_1\setminus C_2$ where $C_1=[-1, 1]^2$ and $C_2=[-1/2, 1/2]\times [-1/\sqrt{2}, 1/\sqrt{2}]$. The associated unitary representation on $L^2(\R^2)$ is $\Pi_{\sigma_2}(j)\psi(x,y)=2^{-3j/4}\psi(2^{-j}x, 2^{-j/2}y)$.

\item The group $\Z^n$ acts on $\R^n$ by translations $\sigma_3(m, x)=m+x$. For this action $J_3(m,x)=1$, a tiling set is $C=[-1/2,1/2]^n$ and the unitary representation associated on $L^2(\R^n)$ to $\sigma_3$ is $\Pi_{\sigma_3}(m)\psi(x)=\psi(x-m)$.

\end{itemize}
\end{example}

\subsection{$(\G, \sigma)$-invariant spaces on $L^2(\X)$}
Given  $\sigma$ a quasi-$\G$-invariant action of $\G$ on $\X$ we can define $(\G, \sigma)$-invariant spaces in $L^2(\X)$ as spaces that are invariant by the operators $\Pi_\sigma(\g)$, $\g\in\G$. More precisely, we have:
\begin{definition}
A closed subspace $V$ of $L^2(\X)$ is said to be {\it $(\G, \sigma)$-invariant} if
$$f\in V\Longrightarrow \Pi_{\sigma}(\g)f\in V, \,\, \textrm{ for any }\,\,\g\in\Gamma.$$

For any subset $\A\subseteq  L^2(\X)$ we define
\[
S_{\sigma}^{\G}(\A)= \overline{\mbox{span}}\{\Pi_{\sigma}(\g)\phi\colon \phi\in\A, \g\in\Gamma\}\,\,\textrm{ and }\,\,
E_{\sigma}^{\G}(\A)= \{\Pi_{\sigma}(\g)\phi\colon \phi\in\A, \g\in\Gamma\}.
\]

We call $S_{\sigma}^{\G}(\A)$ the $(\G, \sigma)$-invariant space  generated by $\A$.
If $V=S_{\sigma}^{\G}(\A)$ for some finite set $\A$  we say that $V$ is a {\it finitely generated} $(\G, \sigma)$-invariant space, and a
{\it principal or cyclic } $(\G, \sigma)$-invariant space if $\A$ has only a single function.
\end{definition}

When $L^2(\X)$ is separable
each $(\G, \sigma)$-invariant space $V$ is of the form $V=S_{\sigma}^{\G}(\A)$ for some countable (meaning finite or countable) set $\A\subseteq  L^2(\X)$.

\begin{remark}
 By \cite[Proposition 3.4.5]{Coh93}, the space $L^2(\X)$ is separable if and only if $\mu$ is a $\sigma$-finite measure and the $\sigma$-algebra on $\X$ is countably generated, meaning that it is generated by countable family of subsets of $\X$.
 From now on, we will always assume that $L^2(\X)$ is separable, even  without mentioning it.
\end{remark}

\subsection{ Vector valued spaces}
Let $(\Omega, \nu)$ be a $\sigma$-finite measure space and let $\mathcal{H}$ be a separable Hilbert space.
The vector valued space $L^2(\Omega, \mathcal{H})$ is the space of measurable functions $\Phi:\Omega\to\mathcal{H}$ such that $\|\Phi\|^2=\int_{\Omega}\|\Phi(\omega)\|^2_{\mathcal{H}}\,d\nu(\omega)<+\infty$. The inner product in $L^2(\Omega, \mathcal{H})$ is given by $\langle \Phi, \Psi\rangle=\int_{\Omega}\langle \Phi(\omega), \Psi(\omega)\rangle_{\mathcal{H}}\,d\nu(\omega)$.

A function $\Phi:\Omega\to\mathcal{H}$ is called {\it simple} if there exist $x_1, \ldots, x_n\in\calH$ and measurable subsets $E_1, \ldots, E_n$ of $\Omega$ such that $\Phi=\sum_{j=1}^nx_j\chi_{E_j}$, where $\chi_{E_j}$ denotes the characteristic function of $E_j$.
When $\Omega$ is of finite measure, the set of simple functions is dense in $L^2(\Omega, \mathcal{H})$. Since we could not find a reference for this fact, we provide a proof  in the Appendix. More details about the subject can be found in
\cite{DU77}.

\subsection{Frames and Riesz bases}
We briefly  recall the definitions of frame and Riesz basis. For a detailed exposition on this subject we refer to \cite{Chr03}.

Let $\mathcal{H}$ be a separable Hilbert space, $I$ be  a finite or countable index set and   $\{f_i\}_{i\in I}$ be a sequence in $\mathcal{H}.$
The sequence $\{f_i\}_{i\in I}$ is said to be  a {\it frame} for $\mathcal{H}$ if there exist $0<A\leq B<+\infty$ such that
\begin{equation*}
A\|f\|^2\leq \sum_{i\in I} |\langle f,f_i\rangle|^2\leq B\|f\|^2
\end{equation*}
for all $f\in\mathcal{H}$. The constants $A$ and $B$ are called {\it frame bounds}. When $A=B=1$,  $\{f_i\}_{i\in I}$ is called {\it  Parseval frame}.

The sequence $\{f_i\}_{i\in I}$ is said to be  a {\it Riesz basis} for $\mathcal{H}$ if it is a complete system in $\calH$ and if there exist $0<A\leq B<+\infty$ such that
\begin{equation*}
A\sum_{i\in I}|a_i|^2\leq \|\sum_{i\in I} a_if_i\|^2\leq B\sum_{i\in I}|a_i|^2
\end{equation*}
for all sequences $\{a_i\}_{i\in I}$ of finite support.

The sequence $\{f_i\}_{i\in I}$ is a {\it frame (or Riesz)
sequence}, if it is a frame (or Riesz basis) for the Hilbert space
it spans, namely $\clspan\{f_i:\, i\in I\}$.

\section{A Zak transform associated to quasi-$\G$-invariant actions}\label{sec-Zak}

In \cite{HSWW10} the authors introduced a definition of the Zak transform associated to quasi-$\G$-invariant action $\sigma$  of a discrete and countable  LCA group $\G$. Recently, in \cite{BHP14}, an analogous  noncommutative Zak transform was  introduced for non-abelian groups. In this section we will make use of the definition given in \cite{HSWW10} and see how the Zak transform allows us to think of $L^2(\X)$ as a vector valued space. The Zak transform  combined with range functions  will allow us to prove a characterization of  $(\G, \sigma)$-invariant spaces similar to those previously given in \cite{Bow00,  BR14, CP10, CMO14}.

Assume $\sigma $ is a quasi-$\G$-invariant action of $\G$ on $\X$ satisfying the tiling property with tiling set $C$.
Let $\Pi_\sigma$ be its associated unitary representation of $\G$ on $L^2(\X)$.
Given $\psi\in L^2(\X)$ we can define the function $\psi_\sigma:\G\times\X\to\C$ as
\begin{equation}\label{def-T-sigma}
 \psi_\sigma(\g,x):=\Pi_\sigma(\g)\psi(x).
\end{equation}
For a fixed $x\in\X$ the function $\psi_\sigma(\cdot,x)$ can be understood as the evolution of $\psi$ along the orbit of $x$, $\{\sigma_\g(x):\g\in\G\}$.

\begin{lemma}\label{lemma-L-2-function}
 For every $\psi\in L^2(\X)$, the sequence
 $\psi_\sigma(\cdot,x)=\{\psi_\sigma(\g, x)\}_{\g\in\G}$ belongs to $\ell^2(\Gamma)$ for $\mu$-a.e. $x\in \X$.
\end{lemma}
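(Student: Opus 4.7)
My strategy is to first establish the finiteness of $\sum_{\g\in\G}|\psi_\sigma(\g,x)|^2$ on the tiling set $C$ by relating this series to $\|\psi\|_{L^2(\X)}^2$ via the change of variable formula, and then propagate the result to all of $\X$ using the tiling property and the cocycle identity.

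\emph{Step 1: Reduction to an integral over $C$.} By the tiling property, $\X=\bigsqcup_{\g\in\G}\sigma_\g(C)$ modulo a $\mu$-null set, so
\[
\|\psi\|_{L^2(\X)}^2=\sum_{\g\in\G}\int_{\sigma_\g(C)}|\psi(x)|^2\,d\mu(x).
\]
Applying the change of variable $x=\sigma_\g(y)$, for which $d\mu(\sigma_\g(y))=J_\sigma(\g,y)d\mu(y)$, each summand becomes $\int_C |J_\sigma(\g,y)^{1/2}\psi(\sigma_\g(y))|^2\,d\mu(y)=\int_C |\Pi_\sigma(-\g)\psi(y)|^2\,d\mu(y)$. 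Relabeling $\g\mapsto -\g$ and using Tonelli (all terms are nonnegative) yields
\[
\|\psi\|_{L^2(\X)}^2=\int_C\sum_{\g\in\G}|\psi_\sigma(\g,y)|^2\,d\mu(y).
\]

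\emph{Step 2: Finiteness on the tiling set.} Since $\psi\in L^2(\X)$, the left-hand side is finite, hence $\sum_{\g}|\psi_\sigma(\g,y)|^2<\infty$ for $\mu$-a.e.\ $y\in C$.

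\emph{Step 3: Extension to all of $\X$.} For a generic $x\in\sigma_{\g_0}(C)$ write $x=\sigma_{\g_0}(y)$ with $y\in C$. From the cocycle identity \eqref{jacobian} (with $\g_1=-\g$, $\g_2=\g_0$ and $\g_1=\g_0$, $\g_2=-\g_0$, using $J_\sigma(e,\cdot)=1$) one derives
\[
J_\sigma(-\g,\sigma_{\g_0}(y))=\frac{J_\sigma(\g_0-\g,y)}{J_\sigma(\g_0,y)},
\]
which substituted into the definition of $\Pi_\sigma$ gives
\[
\Pi_\sigma(\g)\psi(x)=J_\sigma(\g_0,y)^{-1/2}\,\Pi_\sigma(\g-\g_0)\psi(y).
\]
Squaring, summing over $\g$ and reindexing $\g'=\g-\g_0$ produces
\[
\sum_{\g\in\G}|\psi_\sigma(\g,x)|^2=\frac{1}{J_\sigma(\g_0,y)}\sum_{\g'\in\G}|\psi_\sigma(\g',y)|^2,
\]
which is finite whenever the sum in Step 2 is finite at $y$. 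Since $\{\sigma_{\g_0}(C)\}_{\g_0\in\G}$ tiles $\X$ and $\G$ is countable, the exceptional null sets on $C$ propagate to a null set in $\X$, concluding the proof.

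\emph{Main obstacle.} The step that requires care is Step 3, namely keeping track of the Jacobian factor while transporting the almost-everywhere statement from $C$ to $\sigma_{\g_0}(C)$; the algebra of the cocycle identity must be applied carefully to identify $\Pi_\sigma(\g)\psi(x)$ with a rescaled evaluation of $\Pi_\sigma(\g-\g_0)\psi$ at the preimage $y\in C$. Step 1 is essentially a bookkeeping exercise once Tonelli and the tiling are invoked, so the only genuine algebraic content lies in verifying these cocycle manipulations.
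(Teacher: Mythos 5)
Your proposal is correct and follows essentially the same route as the paper: the tiling decomposition plus the change of variable $x=\sigma_\g(y)$ gives $\|\psi\|_{L^2(\X)}^2=\int_C\sum_{\g}|\psi_\sigma(\g,y)|^2\,d\mu(y)$, hence finiteness $\mu$-a.e.\ on $C$, and the cocycle identity yields exactly the paper's relation $\psi_\sigma(\g,\sigma_{\g_0}(y))=J_\sigma(\g_0,y)^{-1/2}\psi_\sigma(\g-\g_0,y)$ to transfer the conclusion to all of $\X$. No gaps; the Jacobian bookkeeping in your Step 3 matches the paper's argument.
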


\begin{proof}
Using the tiling property of the action $\sigma$, we obtain
% \begin{align*}
%  \|\psi\|^2_{L^2(\X)}&=\int_{\X}|\psi(x)|^2\,d\mu(x)\\
% &=\sum_{\g\in\Gamma}\int_{\sigma_{\g}(C)}|\psi(x)|^2\,d\mu(x)\\
% &=\sum_{\g\in\Gamma}\int_{C}|\psi(\sigma_{\g}(y))|^2J_{\sigma}(\g,y)\,d\mu(y)\\
% &=\int_{C}\sum_{\g\in\Gamma}|\psi_\sigma(-\g, y)|^2\,d\mu(y)
% =\int_{C}\|\psi_\sigma(\cdot, y)\|^2_{\ell^2(\G)}\,d\mu(y).
% \end{align*}
\begin{align*}
 \|\psi\|^2_{L^2(\X)}&=\int_{\X}|\psi(x)|^2\,d\mu(x) = \sum_{\g\in\Gamma}\int_{\sigma_{\g}(C)}|\psi(x)|^2\,d\mu(x)\\
&=\sum_{\g\in\Gamma}\int_{C}|\psi(\sigma_{\g}(y))|^2J_{\sigma}(\g,y)\,d\mu(y)\\
&=\int_{C}\sum_{\g\in\Gamma}|\psi_\sigma(-\g, y)|^2\,d\mu(y)
=\int_{C}\|\psi_\sigma(\cdot, y)\|^2_{\ell^2(\G)}\,d\mu(y).
\end{align*}
Since $\psi\in L^2(\X)$, it follows that
$\|\psi_\sigma(\cdot, y)\|^2_{\ell^2(\G)}<+\infty$ for $\mu$-a.e. $y\in C$.

Finally, by \eqref{jacobian} it follows that for  $x\in\X$ written as $x=\sigma_{\g_0}(y)$ with $\g_0\in\G$ and $y\in C$
$$\psi_\sigma(\g,x)=\psi_\sigma(\g,\sigma_{\g_0}(y))=J_\sigma(\g_0,y)^{-\frac{1}{2}}\psi_\sigma(\g-\g_0, y).$$
Thus,
$\|\psi_\sigma(\cdot, x)\|^2_{\ell^2(\G)}=J_\sigma(\g_0,y)^{-1}\|\psi_\sigma(\cdot, y)\|^2_{\ell^2(\G)}$ and the lemma follows.
\end{proof}

\begin{definition}[\cite{HSWW10}]\label{def-tau-sigma}
 Given any  $\psi\in L^2(\X)$ define for $\al\in\widehat{\G}$ and  $x\in\X$ such that $\psi_\sigma(\cdot,x)\in\ell^2(\Gamma)$
$$Z_{\sigma}[\psi](\alpha)(x):=\sum_{\g\in\Gamma}
 \left[(\Pi_{\sigma}(\g)\psi)(x)\right](-\g,\al).$$
 We call $Z_\sigma[\psi]$ the {\it (generalized) Zak  transform of $\psi$}.
\end{definition}
Note that $Z_{\sigma}[\psi](\alpha)(x)$ is the Fourier transform of the sequence $\psi_\sigma(\cdot, x)$ evaluated in $\al$; i.e. $Z_{\sigma}[\psi](\alpha)(x):=\widehat{\psi_\sigma(\cdot, x)}(\al).$ By Lemma \ref{lemma-L-2-function}, $Z_{\sigma}[\psi](\alpha)(x)$ is defined for $\mu$-a.e. $x\in\X$ and every $\al\in\whG$.

In the next proposition we show that $L^2(\X)$ is isometrically isomorphic to the vector valued Hilbert space
$L^2(\widehat{\Gamma}, L^2(C))$.
For this vector valued space, the norm and inner product are
$$\|\Phi\|^2=\int_{\widehat{\G}}\|\Phi(\al)\|^2_{L^2(C)}\,dm_{\widehat{\G}}(\al)\quad\textrm{ and} \quad \langle \Phi, \Psi\rangle=\int_{\widehat{\G}}\langle \Phi(\al), \Psi(\al)\rangle_{L^2(C)}\,dm_{\widehat{\G}}(\al),$$
respectively.

\begin{proposition}\label{abelian-tau-isometry}
 The mapping $Z_{\sigma}: L^2(\X) \longrightarrow  L^2(\widehat{\Gamma}, L^2(C))$ defined by
 $$Z_{\sigma}[\psi](\alpha)(x)=\sum_{\g\in\Gamma}
 \left[(\Pi_{\sigma}(\g)\psi)(x)\right](-\g,\al)$$
 is an isometric isomorphism and satisfies
\begin{equation}\label{quasi-periodic}
Z_{\sigma}[\Pi_{\sigma}(\g)\psi]=X_\g Z_{\sigma}[\psi], \quad \forall \,\,\g\in\G,\,\,\psi\in L^2(\X).
\end{equation}
\end{proposition}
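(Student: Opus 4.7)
The plan has three ingredients: isometry, the intertwining relation \eqref{quasi-periodic}, and surjectivity. I would first observe that for $\mu$-a.e.\ $x\in C$ the series $Z_\sigma[\psi](\al)(x) = \sum_{\g\in\G}\psi_\sigma(\g,x)(-\g,\al)$ is exactly the Fourier transform on the discrete group $\G$ of the sequence $\psi_\sigma(\cdot,x)$, which lies in $\ell^2(\G)$ by Lemma \ref{lemma-L-2-function}. Since $\{X_\g\}_{\g\in\G}$ is an orthonormal basis of $L^2(\whG)$ under the normalization $m_{\whG}(\whG)=1$, Plancherel on $\G$ gives
\[
\int_{\whG}|Z_\sigma[\psi](\al)(x)|^2\,dm_{\whG}(\al) = \|\psi_\sigma(\cdot,x)\|_{\ell^2(\G)}^2
\]
for a.e.\ $x\in C$. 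Interchanging the resulting nonnegative double integral by Tonelli and then invoking the identity $\|\psi\|_{L^2(\X)}^2 = \int_C \|\psi_\sigma(\cdot,y)\|_{\ell^2(\G)}^2\,d\mu(y)$ established inside the proof of Lemma \ref{lemma-L-2-function} would show simultaneously that $Z_\sigma[\psi]\in L^2(\whG,L^2(C))$ and that $Z_\sigma$ is norm-preserving.

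The intertwining relation is a direct change of summation variable: substituting $\g' = \g+\g_0$ in the defining sum for $Z_\sigma[\Pi_\sigma(\g_0)\psi](\al)(x)$, using $\Pi_\sigma(\g)\Pi_\sigma(\g_0) = \Pi_\sigma(\g+\g_0)$, and factoring out $(\g_0,\al) = X_{\g_0}(\al)$ produces \eqref{quasi-periodic}.

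Since an isometry automatically has closed range, surjectivity reduces to density. I would fix an orthonormal basis $\{e_k\}_k$ of the separable Hilbert space $L^2(C)$ and extend each $e_k$ by zero to $L^2(\X)$. By the tiling property, $L^2(\X) = \bigoplus_{\g\in\G} L^2(\sigma_\g(C))$ as an orthogonal sum, and since $\Pi_\sigma(\g)$ is unitary and carries $L^2(C)$ onto $L^2(\sigma_\g(C))$, the family $\{\Pi_\sigma(\g)e_k\}_{\g,k}$ is an orthonormal basis of $L^2(\X)$. A short tiling argument shows that $\Pi_\sigma(\g)e_k$ vanishes $\mu$-a.e.\ on $C$ whenever $\g\neq e$, hence $Z_\sigma[e_k](\al)(x) = e_k(x)$ for $x\in C$; combined with the intertwining relation this yields $Z_\sigma[\Pi_\sigma(\g)e_k](\al)(x) = X_\g(\al)\,e_k(x)$, and these functions form an orthonormal basis of $L^2(\whG,L^2(C))$ because $\{X_\g\}$ is an ONB of $L^2(\whG)$. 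Thus $Z_\sigma$ sends an orthonormal basis to an orthonormal basis, which gives surjectivity. The only delicate points are the Tonelli interchange, immediate from nonnegativity, and the tiling-based vanishing of $\Pi_\sigma(\g)e_k$ on $C$ for $\g\neq e$; neither constitutes a genuine obstacle, so I expect the main work to be the bookkeeping just described rather than any real difficulty.
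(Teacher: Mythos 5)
Your isometry and intertwining arguments coincide with the paper's (Plancherel on $\G$ applied to $\psi_\sigma(\cdot,x)\in\ell^2(\G)$, Tonelli/Fubini, and a change of summation variable), but your surjectivity argument is genuinely different and it is correct. The paper proves ontoness by taking an arbitrary simple function $\Phi=\sum_j f_j\chi_{E_j}\in L^2(\whG,L^2(C))$, expanding each $\chi_{E_j}$ in the characters $X_\g$, explicitly building a preimage $\psi$ piecewise on the tiles $\sigma_\g(C)$ (with the Jacobian correction $J_\sigma(\g,x)^{-1/2}$), and then invoking the density of simple functions (Proposition \ref{prop:simples-are-dense} in the Appendix, which uses that $m_{\whG}(\whG)<\infty$) together with closedness of the range. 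You instead transfer an orthonormal basis: $\{\Pi_\sigma(\g)e_k\}_{\g,k}$ is an ONB of $L^2(\X)=\bigoplus_{\g}L^2(\sigma_\g(C))$ because $\Pi_\sigma(\g)$ maps $L^2(C)$ unitarily onto $L^2(\sigma_\g(C))$ (its formula shows the support moves to $\sigma_\g(C)$), the tiling property kills all terms with $\g\neq e$ when restricting to $C$, so $Z_\sigma[e_k]\equiv e_k$ and, by \eqref{quasi-periodic}, $Z_\sigma[\Pi_\sigma(\g)e_k]=X_\g e_k$, which is the standard tensor ONB of $L^2(\whG,L^2(C))$. Your route avoids both the explicit piecewise construction and the appendix density result (you only need the elementary completeness argument for $\{X_\g e_k\}$, via the scalar functions $\al\mapsto\langle\Phi(\al),e_k\rangle_{L^2(C)}$), and it exhibits $Z_\sigma$ as unitary in one stroke; the paper's route is basis-free and meshes with the vector-valued simple-function machinery it develops anyway for the MI-space theory. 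The only point you gloss over, as the paper also does briefly, is the measurability of $\al\mapsto Z_\sigma[\psi](\al)$ as an $L^2(C)$-valued map, which is settled by the same Fubini observation used in the isometry step; this is bookkeeping, not a gap.
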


\begin{proof}
For each $\psi\in L^2(\X)$, $Z_\sigma[\psi]$ is measurable as a
function defined on $\whG\times C$. Now, proceeding as in    Lemma
\ref{lemma-L-2-function} and by  Plancherel and
Fubini's Theorems we have
\begin{align}\label{eq:zak-well-defined}
\|\psi\|^2_{L^2(\X)}&=\int_{C}\sum_{\g\in\Gamma}|(\Pi_{\sigma}(\g)\psi)(x)|^2\,d\mu(x)\nonumber\\
&=\int_{C}\|\psi_\sigma(\cdot, x)\|_{\ell^2(\G)}^2\,d\mu(x)=\int_{C}\|\widehat{\psi_\sigma(\cdot, x)}\|_{L^2(\widehat{\G})}^2\,d\mu(x)\nonumber\\
&=\int_{C}\int_{\widehat{\G}}|\sum_{\g\in\Gamma}
 \left[(\Pi_{\sigma}(\g)\psi)(x)\right](-\g,\al)|^2\,dm_{\widehat{\G}}(\al)\,d\mu(x)\nonumber\\
 &=\int_{\widehat{\Gamma}}\int_{C}|Z_{\sigma}[\psi](\alpha)(x)|^2\,d\mu(x)\,dm_{\widehat{\G}}(\al).
\end{align}
Therefore, $Z_{\sigma}[\psi](\alpha)\in L^2(C)$ for $m_{\widehat{\G}}$-a.e $\al\in\whG$. Moreover, using again  Fubini's
Theorem, one can see that $Z_\sigma[\psi]$ is measurable as a vector valued function from $\whG$ to $L^2(C)$. Continuing with \eqref{eq:zak-well-defined}, we obtain
$$\|\psi\|^2_{L^2(\X)}=\int_{\widehat{\Gamma}}\|Z_{\sigma}[\psi](\alpha)\|^2_{L^2(C)}\,dm_{\widehat{\G}}(\al)=\|Z_{\sigma}[\psi]\|^2.$$
Thus, $Z_{\sigma}[\psi]\in L^2(\widehat{\Gamma}, L^2(C))$ and $Z_\sigma$ is an isometry.

Let us see now   that $Z_{\sigma}$ is onto. Let $\Phi\in  L^2(\widehat{\Gamma}, L^2(C))$ be a simple function. Then, $\Phi=\sum_{j=1}^nf_j\chi_{E_j}$, where $f_j\in L^2(C)$ and $E_j\subseteq \whG$ is $m_{\whG}$-measurable, for $j=1,\ldots,n$. Since $\{X_\g\}_{\g\in\G}$ is an orthonormal basis for $L^2(\whG)$, we can write
$\chi_{E_j}=\sum_{\g\in\G}a_\g^jX_\g$ where $a^j:=\{a_\g^j\}_{\g\in\G}\in \ell^2(\G)$ for every $j=1,\ldots,n$, and then
$\Phi=\sum_{\g\in\G}\sum_{j=1}^nf_ja_\g^jX_\g$.

For $\g\in\G$ we define  $\psi$ on $\sigma_\g(C)$ as $\psi(\sigma_\g(x)):=J(\g,x)^{-1/2}\sum_{j=1}^nf_j(x)a_\g^j$ for $\mu$-a.e. $x\in C$. Since $C$ is a tiling set, we then have $\psi$ defined on $\X$. Furthermore,
\begin{align*}
 \|\psi\|^2_{L^2(\X)}&=\sum_{\g\in\Gamma}\int_{C}|\psi(\sigma_{\g}(x))|^2J_{\sigma}(\g,x)\,d\mu(x)\\
&=\sum_{\g\in\Gamma}\int_{C}|\sum_{j=1}^nf_j(x)a_\g^j|^2\,d\mu(x)\leq \sum_{\g\in\Gamma}\int_{C}\sum_{j=1}^n|f_j(x)|^2\sum_{l=1}^n|a_\g^l|^2\,d\mu(x)\\
&=\sum_{l=1}^n\|a^l\|_{\ell^2(\G)}^2\sum_{j=1}^n\|f_j\|_{L^2(C)}^2
<+\infty,
\end{align*}
and thus $\psi\in L^2(\X)$. Now for $\al\in\whG$ and $x\in C$ we have
\begin{align*}
 Z_\sigma[\psi](\al)(x)&=\sum_{\g\in\Gamma}\psi(\sigma_{-\g}(x))J(\g, x)^{-1/2}(-\g, \al)\\
 &=\sum_{\g\in\Gamma}\sum_{j=1}^nf_j(x)a_{-\g}^j(-\g, \al)
 =\Phi(\al)(x).
\end{align*}
Therefore, the set of simple functions is contained in the range of
$Z_\sigma$. Since $\G$ is discrete and then $\whG$ compact,
$m_{\whG}(\whG)$ is finite and the set of simple functions is dense
in $L^2(\widehat{\Gamma}, L^2(C))$ (See Proposition
\ref{prop:simples-are-dense} in the Appendix). Hence, due to the
fact that the range of $Z_\sigma$ is closed, we conclude that
$Z_\sigma$ is onto.

To verify that $Z_\sigma$ satisfies \eqref{quasi-periodic} observe that, since $\Pi_\sigma$ is a unitary representation,
and each $\al \in \widehat \Gamma$ is a homomorphism, we have
\begin{align*}
 Z_\sigma[\Pi(\g_0)\psi](\al)(x)&=\sum_{\g\in \Gamma}[\Pi_\sigma(\g)\Pi_\sigma(\g_0)\psi(x) ](-\g, \al)
 =\sum_{\g\in \Gamma}[\Pi_\sigma(\g+\g_0)\psi(x)] (-\g, \al)\\
 &=\sum_{\g\in \Gamma}[\Pi_\sigma(\g)\psi(x) ](-\g+\g_0, \al)=(\g_0,\al)Z_\sigma[\psi](\al)(x). \qedhere
\end{align*}
\end{proof}

\section{Characterization of $(\G, \sigma)$-invariant spaces of $L^2(\X)$}\label{sec:main}

In this section we want to characterize $(\G, \sigma)$-invariant spaces in terms of the generalized Zak transform and range functions.
 Towards this end, we will evoke the general machinery developed by Bownik and Ross in \cite{BR14} where they defined and characterized multiplicatively invariant spaces on $L^2(\Omega, \mathcal{H})$.

\subsection{Multiplicatively invariant spaces in $L^2(\Omega, \mathcal{H})$}\label{sec-MI}
In order to make our exposition self contained, we hereby summarize the material  we need concerning multiplicatively  invariant spaces. See \cite[Section 2]{BR14} for details and proofs.

Fix $(\Omega, \nu)$ a $\sigma$-finite measure space and $\mathcal{H}$ a separable Hilbert space.

A set $D\subseteq L^{\infty}(\Omega)$ is said to be a {\it determining set} for $L^1(\Omega)$ if
for every $f\in L^1(\Omega)$ such that $\int_{\Omega}f(\omega)g(\omega)\,d\nu(\omega)=0\,\,\forall g\in D$, one has $f=0$.
In the setting of Helson \cite{Hel64, Hel86}, a determining set is the set of exponentials with integer parameter, $D=\{e^{2\pi i k\cdot}\}_{k\in\Z}\subseteq L^{\infty}(\mathbb{T})$.

\begin{definition}\label{def-MI}
A closed subspace $M\subseteq  L^2(\Omega, \mathcal{H})$ is {\it multiplicatively  invariant} with respect to the determining set $D$ for $L^1(\Omega)$ (MI space for short) if
$$\Phi\in M\Longrightarrow g\Phi\in M, \,\, \textrm{ for any }\,\,g\in D.$$
For an at most countable   subset $\bm{\varPhi}\subseteq L^2(\Omega, \mathcal{H})$  define
$
M_D(\bm{\varPhi})= \overline{\mbox{span}}\{g\Phi\colon \Phi\in\bm{\varPhi}, g\in D\}.
$
The subspace $M_D(\bm{\varPhi})$ is called the multiplicatively  invariant space  generated by $\bm{\varPhi}$, and we say that $\bPhi$ is a set of generators for $M_D(\bPhi)$.
\end{definition}

MI spaces have been recently characterized in terms of measurable range functions by Bownik and Ross in \cite{BR14}.
We recall that a  {\it range function} is a mapping $J:\Omega\to \{\textrm{closed subspaces of }\mathcal{H}\}$ equipped
with the orthogonal projections $P_J(\omega)$ of $\mathcal{H}$ onto $J(\omega)$. A range function is said to be {\it measurable} if for every $a\in\mathcal{H}$, $\omega\mapsto P_J(\omega)a$ is measurable as a vector valued  function.

\begin{theorem}\cite[Theorem 2.4]{BR14}\label{thm-MI-rango}
Suppose that $L^2(\Omega)$ is separable, so that $L^2(\Omega, \mathcal{H})$ is also separable.
Let $M$ be a closed subspace of $L^2(\Omega, \mathcal{H})$ and $D$ a determining set for $L^1(\Omega)$.
Then, $M$ is an MI space with respect to $D$ if and only if there exists a measurable range function $J$ such that
$$M=\{\Phi\in  L^2(\Omega, \mathcal{H})\,:\, \Phi(\omega)\in J(\omega) \textrm{ a.e. } \omega\in\Omega\}.$$
Identifying range functions that are equal almost everywhere, the correspondence between MI spaces and measurable range functions is one-to-one and onto.

Moreover, when $M=M_D(\bm{\varPhi})$ for some at most countable set $\bm{\varPhi}\subseteq  L^2(\Omega, \mathcal{H})$ the range function associated to $M$ is
$$J(\w)=\clspan\{\Phi(\w):\Phi\in\bm{\varPhi}\}, \quad \textrm{a.e. }\w\in\W.$$
\end{theorem}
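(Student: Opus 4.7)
The plan is to prove the two directions separately, and the key tool will be the determining-set property.

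For the easy direction, suppose $J$ is a measurable range function and $M_J := \{\Phi \in L^2(\W,\calH) : \Phi(\w)\in J(\w)\text{ a.e.}\}$. Since $J(\w)$ is a linear subspace of $\calH$, for any $g\in L^\infty(\W)$ and $\Phi\in M_J$ we have $g(\w)\Phi(\w)\in J(\w)$ a.e., so $g\Phi\in M_J$; in particular $M_J$ is MI with respect to $D$. Closedness of $M_J$ follows because a.e. convergence along a subsequence of any $L^2(\W,\calH)$-convergent sequence preserves membership in the pointwise subspace $J(\w)$.

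For the converse, assume $M$ is MI and use separability of $L^2(\W,\calH)$ to pick a countable dense subset $\{\Phi_n\}_{n\in\N}\subseteq M$. Define
\[
J(\w) := \clspan\{\Phi_n(\w) : n\in\N\} \subseteq \calH.
\]
Measurability of $J$ (meaning $\w\mapsto P_J(\w)a$ measurable for every $a\in\calH$) is obtained by running a pointwise Gram--Schmidt procedure on $\{\Phi_n(\w)\}$: discard, measurably, vectors that are linearly dependent on earlier ones (using measurability of the partial Gram determinants) and normalize, producing a measurable field of orthonormal bases of $J(\w)$. The projection $P_J(\w)a$ is then a countable sum of measurable scalar inner products times measurable vectors, hence measurable. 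I expect this measurable-selection/Gram--Schmidt bookkeeping to be the main technical obstacle, since it requires careful handling of the set where dimension drops.

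Now I claim $M = M_J$. The inclusion $M\subseteq M_J$ is immediate: each $\Phi_n$ satisfies $\Phi_n(\w)\in J(\w)$ by definition, and any $L^2$-limit of elements of $M$ has an a.e.-convergent subsequence, preserving the pointwise containment. For $M_J\subseteq M$, let $\Psi\in M^\perp$; for every $g\in D$ and every $n$, $g\Phi_n\in M$ by the MI property, so
\[
0 = \langle g\Phi_n,\Psi\rangle = \int_\W g(\w)\,\langle \Phi_n(\w),\Psi(\w)\rangle_\calH\, d\nu(\w).
\]
By Cauchy--Schwarz the function $\w\mapsto\langle\Phi_n(\w),\Psi(\w)\rangle_\calH$ lies in $L^1(\W)$, and since $D$ is a determining set for $L^1(\W)$ it vanishes a.e. Taking the union of the null sets over $n$, one obtains $\Psi(\w)\perp\Phi_n(\w)$ for all $n$ a.e., hence $\Psi(\w)\in J(\w)^\perp$ a.e. Thus any $\Phi\in M_J$ is orthogonal to every $\Psi\in M^\perp$ pointwise, hence in $L^2(\W,\calH)$, so $\Phi\in M$.

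Uniqueness of $J$ modulo null sets follows by applying the previous paragraph to $M$ realized from two candidate range functions $J_1,J_2$: if $\nu(\{\w: J_1(\w)\neq J_2(\w)\})>0$, one can select, using measurability, a nonzero $\Phi$ in $L^2(\W,\calH)$ with $\Phi(\w)\in J_1(\w)\setminus J_2(\w)$ on a set of positive measure (say), contradicting $M_{J_1}=M_{J_2}$. Finally, when $M=M_D(\bPhi)$, the construction gives $J(\w)=\clspan\{\Phi(\w):\Phi\in\bPhi\}$ a.e., since the set $\{g\Phi:g\in D,\Phi\in\bPhi\}$ is already dense in $M$ and for every $\w$ the vectors $g(\w)\Phi(\w)$ span the same closed subspace of $\calH$ as $\{\Phi(\w):\Phi\in\bPhi\}$.
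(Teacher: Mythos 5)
This statement is quoted by the paper from \cite[Theorem 2.4]{BR14} without proof, so there is no in-paper argument to compare against; your proposal essentially reconstructs the standard Helson--Bownik--Ross proof: countable dense subset of $M$, pointwise definition of $J$, measurability via a measurable Gram--Schmidt selection, and the determining-set property to show that $\Psi\in M^{\perp}$ implies $\Psi(\w)\perp J(\w)$ a.e., whence $M_J\subseteq (M^{\perp})^{\perp}=M$. The core of the argument is correct and matches the cited source's approach.

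One small point to tighten: in the ``moreover'' part, the claim that \emph{for every} $\w$ the vectors $g(\w)\Phi(\w)$, $g\in D$, span the same closed subspace as $\{\Phi(\w):\Phi\in\bPhi\}$ is not literally true (all $g\in D$ could vanish at a given point), and it is cleaner to argue as follows: finite combinations of the $g\Phi$ are dense in $M$, and a.e.-convergent subsequences give $J(\w)\subseteq\clspan\{\Phi(\w):\Phi\in\bPhi\}$ a.e.; conversely each $\Phi\in\bPhi$ lies in $M_D(\bPhi)$ (by the same determining-set computation you used for $M^{\perp}$, applied to $\langle g\Phi,\Psi\rangle$), so $\Phi(\w)\in J(\w)$ a.e. and the reverse inclusion holds. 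Similarly, the uniqueness step is best made concrete by testing with $\chi_E P_{J_1}(\cdot)a_k$ for $a_k$ in a countable dense subset of $\mathcal{H}$ and $E$ of finite measure, rather than an unspecified measurable selection; with these routine repairs the proof is complete.
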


\subsection{Characterization of $(\G, \sigma)$-invariant spaces}
To our purpose of characterizing $(\G, \sigma)$-invariant spaces we will connect them with MI spaces. We start by choosing $\Omega=\widehat{\G}$ and $\mathcal{H}=L^2(C)$. We point out that since we assume that $L^2(\X)$ is  separable, so is $L^2(C)$. Furthermore, the space $L^2(\whG)$ is also separable and then $L^2(\whG, L^2(C))$ is separable as well.
In this setting a {\it range function} is  a mapping
\begin{equation}\label{range-function}
 J:\widehat{\Gamma}\longrightarrow \{\textrm{closed subspaces of } L^2(C)\}
\end{equation}
equipped with the orthogonal projections $P_{J}(\alpha):L^2(C)\to J(\alpha)$.

Our main result is:
\begin{theorem}\label{thm-main}
Let $V\subseteq L^2(\X)$ be a closed subspace and let $Z_{\sigma}$ be the Zak transform of Proposition \ref{abelian-tau-isometry}.
Then, the following conditions are equivalent:
\begin{enumerate}
 \item $V$ is a $(\G, \sigma)$-invariant  space;
 \item there exists a measurable  range function $J$ such that
$$V=\big\{f\in L^2(\X):\,\, Z_{\sigma}[f](\alpha)\in J(\alpha)\,\,\textrm{a.e.}\,\,\alpha\in \widehat{\Gamma}\big\}.$$
\end{enumerate}

Identifying  range functions which are equal almost everywhere, the correspondence between
$(\G, \sigma)$-invariant spaces and  measurable range functions is one to one and onto.

Moreover, if $V=S_{\sigma}^{\G}(\A)$ for some countable subset $\A$ of $L^2(\X)$, the measurable  range function $J$
associated to $V$ is given by
$$J(\alpha)=\clspan\{Z_{\sigma} [\phi](\alpha)\,:\,\phi\in\A\}, \qquad \textrm {a.e. }\alpha\in \widehat{\Gamma}.$$
%a.e. $\alpha\in \widehat{\Gamma}$.
\end{theorem}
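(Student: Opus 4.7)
The plan is to transport the problem via the Zak transform $Z_\sigma$ to the setting of multiplicatively invariant spaces and then apply Theorem \ref{thm-MI-rango}. The setup is tailor-made for this: by Proposition \ref{abelian-tau-isometry}, $Z_\sigma$ is an isometric isomorphism from $L^2(\X)$ onto $L^2(\widehat{\G},L^2(C))$ that intertwines the action of $\Pi_\sigma(\g)$ on $L^2(\X)$ with multiplication by the character $X_\g$ on $L^2(\widehat{\G},L^2(C))$. Thus, taking $\Omega=\widehat{\G}$, $\mathcal{H}=L^2(C)$, and the candidate determining set
\[
D=\{X_\g : \g\in\G\}\subseteq L^\infty(\widehat{\G}),
\]
it is natural to expect that $V\subseteq L^2(\X)$ is $(\G,\sigma)$-invariant if and only if $Z_\sigma(V)$ is multiplicatively invariant with respect to $D$.

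First I would verify that $D$ is indeed a determining set for $L^1(\widehat{\G})$. Since $\G$ is discrete and countable, $\widehat{\G}$ is compact with $m_{\widehat{\G}}(\widehat{\G})=1$, and $\{X_\g\}_{\g\in\G}$ is an orthonormal basis of $L^2(\widehat{\G})$. If $f\in L^1(\widehat{\G})$ satisfies $\int_{\widehat{\G}} f\,\overline{X_\g}\,dm_{\widehat{\G}}=0$ for every $\g$ (equivalently $\int f\,X_{-\g}=0$), these are the Fourier coefficients of $f\in L^1\subseteq L^2$ with respect to $\{X_\g\}$, forcing $f=0$. Next I would check the equivalence: if $V$ is $(\G,\sigma)$-invariant, then $\Pi_\sigma(\g)V\subseteq V$ for all $\g$, and applying $Z_\sigma$ together with (\ref{quasi-periodic}) yields $X_\g\,Z_\sigma(V)\subseteq Z_\sigma(V)$, i.e.\ $Z_\sigma(V)$ is $D$-MI; being the image of a closed subspace under an isometric isomorphism, $Z_\sigma(V)$ is closed. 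The converse runs identically in the opposite direction.

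With the correspondence $V\mapsto Z_\sigma(V)$ between $(\G,\sigma)$-invariant subspaces and $D$-MI subspaces established, Theorem \ref{thm-MI-rango} applied to $M:=Z_\sigma(V)$ produces a measurable range function $J:\widehat{\G}\to\{\text{closed subspaces of }L^2(C)\}$, unique up to a.e.\ equality, such that $M=\{\Phi\in L^2(\widehat{\G},L^2(C)) : \Phi(\alpha)\in J(\alpha)\text{ a.e.}\}$. Pulling this description back through $Z_\sigma$ gives precisely the characterization in (2), and the one-to-one correspondence between $(\G,\sigma)$-invariant spaces and measurable range functions (mod a.e.) follows from the corresponding statement for MI spaces. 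For the final assertion, note that $Z_\sigma\big(S_\sigma^\G(\A)\big)=\overline{\mathrm{span}}\{X_\g Z_\sigma[\phi] : \phi\in\A,\g\in\G\}=M_D(Z_\sigma(\A))$ by (\ref{quasi-periodic}) and linearity-plus-continuity of $Z_\sigma$, so the "moreover" part of Theorem \ref{thm-MI-rango} gives $J(\alpha)=\overline{\mathrm{span}}\{Z_\sigma[\phi](\alpha) : \phi\in\A\}$ a.e.

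I do not expect any serious obstacle. The only point requiring care is the first step: checking that $D=\{X_\g\}$ truly is a determining set (which hinges on discreteness of $\G$, hence compactness of $\widehat{\G}$ and finiteness of $m_{\widehat{\G}}$, so that $L^1$-Fourier uniqueness applies) and confirming that the MI-invariance of $Z_\sigma(V)$ under all of $D$, rather than under arbitrary $L^\infty(\widehat{\G})$ multipliers, is exactly what $(\G,\sigma)$-invariance translates to. Once this is in place, everything else is a direct translation through the isometric isomorphism $Z_\sigma$.
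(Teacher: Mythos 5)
Your proposal is correct and follows essentially the same route as the paper: the paper first proves exactly your intermediate claim as Lemma \ref{lemma-connection-MI-spaces} ($V$ is $(\G,\sigma)$-invariant iff $Z_\sigma[V]$ is a $D$-MI space for $D=\{X_\g\}_{\g\in\G}$, with $D$ determining by Fourier uniqueness, and generators corresponding to generators), and then applies Theorem \ref{thm-MI-rango} and pulls the range-function description back through the isometric isomorphism $Z_\sigma$, including the ``moreover'' statement, just as you do.
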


The proof of Theorem \ref{thm-main} is based in the next lemma.

\begin{lemma}\label{lemma-connection-MI-spaces}
 Let $V\subseteq L^2(\X)$ be a closed subspace. Then,
 $V$ is a $(\G, \sigma)$-invariant space if and only if $Z_\sigma[V]\subseteq L^2(\widehat{\Gamma}, L^2(C))$  is a MI space with respect to the determining set $D=\{X_\g\}_{\g\in\Gamma}$.
 In particular, $\A\subseteq L^2(\X)$ is a set of generators for the $(\G, \sigma)$-invariant space $V$ if and only if the set $Z_\sigma[\A]$ generates $Z_\sigma[V]$ as a $D$-MI space.
\end{lemma}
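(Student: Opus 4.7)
The plan is to exploit the intertwining identity \eqref{quasi-periodic}, which via $Z_\sigma$ converts the action of $\Pi_\sigma(\g)$ on $L^2(\X)$ into multiplication by the character $X_\g$ on $L^2(\whG, L^2(C))$. Since $Z_\sigma$ is an isometric isomorphism by Proposition \ref{abelian-tau-isometry}, both implications should follow almost formally once the correct determining set has been identified.

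First I would verify that $D = \{X_\g\}_{\g\in\G}$ is a determining set for $L^1(\whG)$. Because $\G$ is discrete and countable, $\whG$ is compact with finite Haar measure, so this reduces to the standard $L^1$-uniqueness theorem for the Fourier transform on an LCA group: if $f \in L^1(\whG)$ satisfies $\int_{\whG} f(\al) X_\g(\al)\, dm_{\whG}(\al) = 0$ for all $\g \in \G$, then all Fourier coefficients of $f$ vanish and hence $f = 0$ almost everywhere. Equivalently, one can invoke Stone--Weierstrass to get norm density of trigonometric polynomials in $C(\whG)$ and conclude by duality.

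For the equivalence itself, I would argue as follows. If $V$ is $(\G,\sigma)$-invariant, fix $\Phi = Z_\sigma[f]$ with $f \in V$. By \eqref{quasi-periodic}, $X_\g \Phi = Z_\sigma[\Pi_\sigma(\g)f]$, which lies in $Z_\sigma[V]$ since $\Pi_\sigma(\g)f \in V$. The subspace $Z_\sigma[V]$ is closed because $Z_\sigma$ is an isometric isomorphism, hence $Z_\sigma[V]$ is a $D$-MI space. Conversely, if $Z_\sigma[V]$ is $D$-MI and $f \in V$, then $Z_\sigma[\Pi_\sigma(\g)f] = X_\g Z_\sigma[f] \in Z_\sigma[V]$, and injectivity of $Z_\sigma$ together with closedness of $V$ give $\Pi_\sigma(\g)f \in V$.

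The ``in particular'' statement reduces to the fact that unitary operators preserve closed linear spans. Applying $Z_\sigma$ inside the defining closed span of $S_\sigma^\G(\A)$ and using \eqref{quasi-periodic} yields
\[
Z_\sigma[S_\sigma^\G(\A)] = \clspan\{Z_\sigma[\Pi_\sigma(\g)\phi] : \phi\in\A,\ \g\in\G\} = \clspan\{X_\g Z_\sigma[\phi] : \phi\in\A,\ \g\in\G\} = M_D(Z_\sigma[\A]),
\]
so $\A$ generates $V$ as a $(\G,\sigma)$-invariant space precisely when $Z_\sigma[\A]$ generates $Z_\sigma[V]$ as a $D$-MI space. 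The only mildly delicate point in the whole argument is the verification that $D$ is determining; everything else is bookkeeping built on the intertwining identity and the unitarity of $Z_\sigma$.
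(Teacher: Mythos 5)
Your proof is correct and follows essentially the same route as the paper: the paper's own (very brief) argument likewise notes that $D=\{X_\g\}_{\g\in\G}$ is determining by uniqueness of the Fourier transform and then deduces the equivalence from the fact that $Z_\sigma$ is an isometric isomorphism together with the intertwining identity \eqref{quasi-periodic}. Your write-up simply spells out the details (closedness of $Z_\sigma[V]$, the two implications, and the preservation of closed spans for the ``in particular'' part) that the paper leaves implicit.
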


\begin{proof}
 First note that,  as a consequence of the uniqueness of the Fourier transform,  the set $D=\{X_\g\}_{\g\in\Gamma}$ is a determining set for $L^1(\widehat{\G})$. Then, the result follows from the fact that the Zak transform is an isomorphism and  by property \eqref{quasi-periodic}.
\end{proof}

\begin{proof}[Proof of Theorem \ref{thm-main}]
 Let us first show ${\it (1)\Rightarrow(2)}$. By Lemma \ref{lemma-connection-MI-spaces}, $Z_\sigma[V]$ is a $D$-MI space in $L^2(\widehat{\Gamma}, L^2(C))$. Then by Theorem \ref{thm-MI-rango} there exists a measurable
 range function  $J$  such that
 $Z_\sigma[V]=\big\{\Phi\in L^2(\widehat{\Gamma}, L^2(C)):\,\, \Phi(\alpha)\in J(\alpha)\,\,\textrm{a.e.}\,\,\alpha\in \widehat{\Gamma}\big\}.$
 This, together with Proposition \ref{abelian-tau-isometry} implies that $V=\big\{f\in L^2(\X):\,\, Z_{\sigma}[f](\alpha)\in J(\alpha)\,\,\textrm{a.e.}\,\,\alpha\in \widehat{\Gamma}\big\},$ proving ${\it (2)}$.

 To prove ${\it (2)\Rightarrow(1)}$ observe that if $V=\big\{f\in L^2(\X):\,\, Z_{\sigma}[f](\alpha)\in J(\alpha)\,\,\textrm{a.e.}\,\,\alpha\in \widehat{\Gamma}\big\},$ for some measurable range function $J$, then $Z_\sigma[V]=\big\{\Phi\in L^2(\widehat{\Gamma}, L^2(C)):\,\, \Phi(\alpha)\in J(\alpha)\,\,\textrm{a.e.}\,\,\alpha\in \widehat{\Gamma}\big\}$.  This  implies that $Z_\sigma[V]$ is a $D$-MI space. Indeed, if $\Phi\in Z_\sigma[V]$, $\Phi(\alpha)\in J(\alpha)$ a.e. $\al\in\widehat{\G}$ and thus, since $J(\al)$ is a subspace, $X_\g(\al) \Phi(\alpha)\in J(\alpha)$ a.e. $\al\in\widehat{\G}$. Therefore, $X_\g\Phi\in Z_\sigma[V]$. Then, Lemma \ref{lemma-connection-MI-spaces} guarantees that $V$ is a $(\G, \sigma)$-invariant space.

 The bijective correspondence between measurable range functions and $(\G, \sigma)$-invariant spaces is due to Theorem \ref{thm-MI-rango}.

 Finally, when $V=S_\sigma^\G(\A)$ for some $\A\subseteq  L^2(\X)$, we know by Lemma \ref{lemma-connection-MI-spaces} that $Z_\sigma[V]$ is generated by $Z_\sigma[\A]$ as a $D$-MI space. Thus, Theorem \ref{thm-MI-rango} guarantees that the range function $J$ is $J(\alpha)=\clspan\{Z_{\sigma} [\phi](\alpha)\,:\,\phi\in\A\},$
a.e. $\alpha\in \widehat{\Gamma}$.
\end{proof}

When $\Z^n$ acts by translations on $\R^n$, it is known that there is a relationship  between the minimum number of functions that one needs to generate a shift-invariant space and the dimension of the spaces defined by the range function associated to it (see \cite[Proposition 4.1]{TW14}). This connection is also true for $(\G, \sigma)$-invariant spaces. In fact, we shall show that it is true at the general level of MI spaces.

Keeping the notation of Section \ref{sec-MI}, suppose that $M\subseteq L^2(\W, \calH)$ is a MI space with respect to $D$ generated by a finite number of functions in $L^2(\W, \calH)$. Define  the {\it length} of $M$ as
$$\ell(M):= \min\{n\in\N\colon \exists \,\,\Phi_1, \cdots,\Phi_n \in
M \textrm{ with } M=M_D(\Phi_1, \ldots,\Phi_n)\}.$$
The length of $M$ can be expressed in terms of the range function associated to $M$ as follows:

\begin{proposition}
 Let $M\subseteq L^2(\W, \calH)$ be a MI space  finitely generated and let $J$ be the range function associated to $M$ through Theorem \ref{thm-MI-rango}. Then,
 $$\ell(M)=\esssup_{\w\in\W} \dim(J(\w)).$$
\end{proposition}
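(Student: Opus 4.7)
The plan is to establish the two inequalities $\esssup_{\w}\dim J(\w) \le \ell(M)$ and $\ell(M) \le \esssup_{\w}\dim J(\w)$ separately. The first is immediate from the machinery already available: if $M = M_D(\Phi_1,\ldots,\Phi_n)$ realizes the minimum $n = \ell(M)$, then the last assertion of Theorem \ref{thm-MI-rango} gives $J(\w) = \clspan\{\Phi_i(\w) : i=1,\ldots,n\}$ for a.e.\ $\w$, whence $\dim J(\w) \le n$ a.e.\ and $\esssup_{\w}\dim J(\w) \le \ell(M)$. In particular, $k := \esssup_{\w}\dim J(\w) \le \ell(M) < \infty$ is a well-defined finite integer.

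For the reverse inequality the goal is to exhibit $k$ elements $\widetilde\Phi_1,\ldots,\widetilde\Phi_k \in L^2(\W,\calH)$ whose pointwise closed linear spans equal $J(\w)$ for a.e.\ $\w$; once such generators are produced, the uniqueness in Theorem \ref{thm-MI-rango} forces $M_D(\widetilde\Phi_1,\ldots,\widetilde\Phi_k) = M$, giving $\ell(M) \le k$. To build them, I start from any finite generating set $\Psi_1,\ldots,\Psi_n$ of $M$ (which exists by hypothesis) and apply a pointwise Gram--Schmidt orthonormalization to produce measurable $e_1,\ldots,e_n : \W \to \calH$ such that, for a.e.\ $\w$, the non-zero entries among $\{e_i(\w)\}_{i=1}^n$ form an orthonormal basis of $J(\w)$; in particular $N(\w) := \#\{i : e_i(\w) \ne 0\} = \dim J(\w) \le k$. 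For each $\w$ I enumerate the indices where $e_i(\w) \ne 0$ as $\sigma_1(\w) < \cdots < \sigma_{N(\w)}(\w)$, and set $\widetilde\Phi_j(\w) := e_{\sigma_j(\w)}(\w)$ for $j \le N(\w)$ and $\widetilde\Phi_j(\w) := 0$ otherwise, for $j = 1,\ldots,k$. By construction $\clspan\{\widetilde\Phi_j(\w) : j = 1,\ldots,k\} = J(\w)$ a.e.

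Since the $e_i$ have pointwise norm in $\{0,1\}$, the $\widetilde\Phi_j$ may fail to lie in $L^2(\W,\calH)$ when $\nu(\W) = \infty$. I fix this by multiplying by a strictly positive weight $f \in L^2(\W)$, which exists by $\sigma$-finiteness of $\nu$: the new functions $f\widetilde\Phi_j$ are square-integrable and, since $f(\w) > 0$ a.e., have the same pointwise spans, so Theorem \ref{thm-MI-rango} identifies the MI space they generate with $M$.

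The main technical obstacle is the measurable Gram--Schmidt step: the recursion $e_i = v_i/\|v_i\|$ with $v_i = \Psi_i - \sum_{l<i}\langle\Psi_i,e_l\rangle e_l$ involves division by norms that may vanish on measurable sets, so one has to set $e_i = 0$ precisely there and then verify, on a case-by-case basis according to which prefix of $\Psi_1,\ldots,\Psi_n$ is linearly independent at $\w$, that the resulting section is measurable as a function into $\calH$. A related but routine point is the measurability of $\w \mapsto \dim J(\w)$, which can be read off from the rank of the measurable Gram matrix $[\langle\Psi_i(\w),\Psi_l(\w)\rangle]_{i,l=1}^n$ via vanishing of minors; this also yields measurability of the indices $\sigma_j(\w)$ used above.
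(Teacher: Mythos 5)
Your proof is correct, but for the inequality $\ell(M)\le \esssup_{\w\in\W}\dim(J(\w))$ it takes a genuinely different route from the paper. The paper argues by contradiction: writing $\ell=\ell(M)$ and assuming $\dim(J(\w))\le \ell-1$ a.e., it takes generators $\Phi_1,\ldots,\Phi_\ell$, uses the elementary measurable selection $f(\w)=\min\{i: J(\w)=\mathrm{span}\{\Phi_j(\w): j\neq i\}\}$ to partition $\W$ into sets $\W_i$, and on each $\W_i$ simply deletes the $i$-th generator, producing $\ell-1$ functions whose fibers still span $J(\w)$ a.e.; Theorem \ref{thm-MI-rango} then exhibits $\ell-1$ generators for $M$, contradicting minimality. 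You instead prove the inequality directly, by constructing $k=\esssup_{\w}\dim(J(\w))$ generators: a pointwise measurable Gram--Schmidt applied to any finite generating set, a measurable compression of the nonzero orthonormal fibers into $k$ sections, and multiplication by a strictly positive weight in $L^2(\W)$ to ensure square-integrability; the bijectivity statement in Theorem \ref{thm-MI-rango} then identifies the MI space generated by these $k$ functions with $M$ (and their fibers lie in $J(\w)$, so they do belong to $M$, as the definition of $\ell(M)$ requires). The technical points you flag -- measurability of the Gram--Schmidt sections, of $\w\mapsto\dim J(\w)$, and of the index functions $\sigma_j$ -- are indeed routine (define $e_i=v_i/\|v_i\|$ on the measurable set $\{\|v_i\|>0\}$ and $0$ elsewhere, inductively), so the argument is complete. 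What the paper's route buys is brevity: the only selection needed is a minimum over finitely many indices, with no orthonormalization and no weighting. What your route buys is a constructive and slightly stronger output: it produces a generating set of minimal cardinality whose fibers are orthogonal (after the weight, essentially of the type appearing in the decomposition of Theorem \ref{thm-dec} via \cite[Theorem 2.6]{BR14}), avoids the contradiction argument, and adapts with little change to countably generated MI spaces.
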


\begin{proof}
 First put $\ell(M)=\ell$. Since $M$ can be generated  by a set of $\ell$ functions, Theorem \ref{thm-MI-rango} gives that
 $\dim(J(\w))\leq \ell$ for a.e. $\w\in\W$ and then $\esssup_{\w\in\W} \dim(J(\w))\leq \ell$. To see that indeed the equality must hold, it is enough to prove that $\{\w\in\W:\, \dim(J(\w))=\ell\}$ has positive  $\nu$-measure.

 Suppose towards a contradiction that  $\nu(\{\w\in\W:\, \dim(J(\w))=\ell\})=0$. Then,  $\dim(J(\w))\leq \ell-1$ for a.e. $\w\in\W$. Following the ideas of the proof of \cite[Proposition 4.1]{TW14}, we will construct a set of $\ell-1$ generators for $M$.

 Let $\{\Phi_1, \ldots, \Phi_\ell\}\in L^2(\W, \calH)$ be a set of generators for $M$. Then,  $J(\w)=\mathrm{span}\{\Phi_i(\w):\, 1\leq i\leq\ell\}$ for a.e. $\w\in\W$. Since  $\dim(J(\w))\leq \ell-1$ for a.e. $\w\in\W$, there must be  a vector of the set $\{\Phi_i(\w):\, 1\leq i\leq\ell\}$ belonging  to the space spanned by the others $\ell-1$ vectors. With this in mind, define the ($\nu$-measurable) function $f:\W\to\{1, \ldots, \ell\}$ as $f(\w)=\min\{1\leq i\leq\ell:\, J(\w)=\mathrm{span}\{\Phi_j(\w):\, 1\leq j\leq \ell, j\neq i\}\}$. Then, if $\W_i:=\{\w\in\W:\, f(\w)=i\}$, $\nu(\W\setminus\bigcup_{i=1}^\ell\W_i)=0$ and the union $\bigcup_{i=1}^\ell\W_i$ is disjoint. Note that if $\w\in\W_i$, then $J(\w)=\mathrm{span}\{\Phi_j(\w):\, 1\leq j\leq \ell, j\neq i\}$.

We define functions $\Psi_1, \ldots, \Psi_{\ell-1}\in L^2(\W, \calH)$ such that:\\
 on $\W_1$
 $$\Psi_j=\Phi_{j+1}, \textrm{ for } 1\leq j\leq\ell-1 $$
 and on $\W_i, \, 2\leq i\leq\ell$,
 $$\Psi_j=\begin{cases}
           \Phi_j &\textrm{ if } 1\leq j\leq i-1\\
           \Phi_{j+1} &\textrm{ if } i\leq j\leq \ell-1.
          \end{cases}
$$
Therefore, $J(\w)=\mathrm{span}\{\Psi_j(\w):\, 1\leq j\leq \ell-1\}$ for a.e. $\w\in\W$ and by Theorem \ref{thm-MI-rango}, $M=M_D(\Psi_1, \ldots, \Psi_{\ell-1})$ which is a contraction and the proposition is proven.
\end{proof}

For a $(\G, \sigma)$-invariant space $V$ which is finitely generated, we can define the {\it length of $V$} as
$\ell(V):=\min\{n\in\N\colon \exists \,\,\phi_1, \cdots,\phi_n \in V
 \textrm{ with } V=S_{\sigma}^\G(\phi_1, \ldots,\phi_n)\}$.
As an immediate consequence of the above result and Lemma \ref{lemma-connection-MI-spaces} we get:
\begin{corollary}\label{cor-lenght}
 Let $V\subseteq L^2(\X)$ be $(\G, \sigma)$-invariant space finitely generated and let $J$ be be the range function associated to $V$
  through Theorem \ref{thm-main}. Then,
 $$\ell(V)=\esssup_{\al\in\whG} \dim(J(\al)).$$
\end{corollary}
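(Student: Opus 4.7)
The plan is to reduce the statement to the proposition about the length of MI spaces which was just proved, using Lemma \ref{lemma-connection-MI-spaces} as the bridge.

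First I would verify that the Zak transform preserves the length invariant. Concretely, by Lemma \ref{lemma-connection-MI-spaces}, $Z_{\sigma}[V]$ is a $D$-MI subspace of $L^2(\widehat{\G}, L^2(C))$ with respect to the determining set $D=\{X_{\g}\}_{\g\in\G}$, and moreover a finite collection $\{\phi_1,\dots,\phi_n\}\subseteq L^2(\X)$ generates $V$ as a $(\G,\sigma)$-invariant space if and only if $\{Z_{\sigma}[\phi_1],\dots,Z_{\sigma}[\phi_n]\}$ generates $Z_{\sigma}[V]$ as a $D$-MI space. Since $Z_{\sigma}$ is a bijection on the relevant families of generators, this immediately yields the identity $\ell(V)=\ell(Z_{\sigma}[V])$, where the right-hand side is the length in the MI sense.

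Next I would match up the range functions. The proof of Theorem \ref{thm-main} shows that the measurable range function $J$ associated to $V$ is precisely the range function associated to the MI space $Z_{\sigma}[V]$ via Theorem \ref{thm-MI-rango} (both spaces are described as $\{\Phi : \Phi(\al)\in J(\al)\text{ a.e.}\}$ after passing through $Z_{\sigma}$). Hence the $J$ appearing in the statement of Corollary \ref{cor-lenght} is exactly the one to which the previous proposition applies.

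Finally I would invoke the previous proposition with $\W=\widehat{\G}$, $\nu=m_{\widehat{\G}}$, and $\calH=L^2(C)$, which gives
\[
\ell(Z_{\sigma}[V])=\esssup_{\al\in\widehat{\G}}\dim(J(\al)).
\]
Combining this with $\ell(V)=\ell(Z_{\sigma}[V])$ finishes the proof. There is essentially no obstacle here: the only point that requires a brief justification is that the Zak transform transfers generating sets faithfully (so that the minima defining $\ell(V)$ and $\ell(Z_{\sigma}[V])$ are attained by collections of the same cardinality), but this is exactly the content of the second half of Lemma \ref{lemma-connection-MI-spaces}. The corollary is therefore a direct translation of the previous proposition across the isometric isomorphism $Z_{\sigma}$.
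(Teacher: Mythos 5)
Your argument is correct and is exactly the route the paper takes: the corollary is obtained by transferring the length of $V$ to that of the $D$-MI space $Z_{\sigma}[V]$ via Lemma \ref{lemma-connection-MI-spaces}, identifying the range functions through Theorem \ref{thm-main}, and applying the preceding proposition on the length of MI spaces with $\W=\whG$ and $\calH=L^2(C)$. Nothing is missing.
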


\section{Frame and Riesz systems  arising from quasi-$\G$-invariant actions}\label{sec:frame-riesz}
In this section we investigate necessary and sufficient conditions for  the system  $E_{\sigma}^{\G}(\A)$ to be a frame or a Riesz basis for $S_{\sigma}^{\G}(\A)$.

\begin{theorem}\label{thm-frames}
 Let $\A\subseteq L^2(\X)$ be a countable set and let $J$ be the  measurable range function associated to $V=S_{\sigma}^{\G}(\A)$. Then, the following two conditions are equivalent:
 \begin{itemize}
  \item [$(i)$] $E_{\sigma}^{\G}(\A)$ is a frame for $V$ with frame bound $0<A\leq B<+\infty$.
  \item [$(ii)$] For almost every $\alpha\in\widehat{\Gamma}$, the set $\{Z_{\sigma} [\phi](\alpha):\,\phi\in\A\}$ is a frame for $J(\alpha)$ with uniform frame bounds $A, B$.
 \end{itemize}

\end{theorem}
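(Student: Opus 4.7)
The plan is to reduce the frame condition for $E_{\sigma}^{\G}(\A)$ in $V$ to a fiberwise frame condition via the Zak transform.  The key identity to establish is that, for $f\in L^2(\X)$ and $\phi\in\A$, the numbers $\langle f,\Pi_\sigma(\g)\phi\rangle$ are precisely the Fourier coefficients (with respect to the orthonormal basis $\{X_\g\}_{\g\in\G}$ of $L^2(\whG)$) of the scalar function
\[
F_{f,\phi}(\al):=\langle Z_\sigma[f](\al),Z_\sigma[\phi](\al)\rangle_{L^2(C)}\,.
\]
Indeed, since $Z_\sigma$ is an isometry by Proposition \ref{abelian-tau-isometry} and $Z_\sigma[\Pi_\sigma(\g)\phi]=X_\g Z_\sigma[\phi]$, one has $\langle f,\Pi_\sigma(\g)\phi\rangle=\int_{\whG}F_{f,\phi}(\al)\overline{X_\g(\al)}\,dm_{\whG}(\al)$, so that Parseval in $L^2(\whG)$ yields
\[
\sum_{\g\in\G}|\langle f,\Pi_\sigma(\g)\phi\rangle|^2=\|F_{f,\phi}\|_{L^2(\whG)}^2=\int_{\whG}|\langle Z_\sigma[f](\al),Z_\sigma[\phi](\al)\rangle_{L^2(C)}|^2\,dm_{\whG}(\al)\,.
\]

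Summing over $\phi\in\A$ and applying Tonelli's theorem, together with Proposition \ref{abelian-tau-isometry} to rewrite $\|f\|^2_{L^2(\X)}=\int_{\whG}\|Z_\sigma[f](\al)\|^2_{L^2(C)}\,dm_{\whG}(\al)$, the frame inequality $A\|f\|^2\le\sum_{\phi,\g}|\langle f,\Pi_\sigma(\g)\phi\rangle|^2\le B\|f\|^2$ for all $f\in V$ becomes
\[
A\!\int_{\whG}\!\|Z_\sigma[f](\al)\|_{L^2(C)}^2\,dm_{\whG}(\al)\le\int_{\whG}\!\sum_{\phi\in\A}|\langle Z_\sigma[f](\al),Z_\sigma[\phi](\al)\rangle|^2\,dm_{\whG}(\al)\le B\!\int_{\whG}\!\|Z_\sigma[f](\al)\|_{L^2(C)}^2\,dm_{\whG}(\al)\,.
\]
Recall from Theorem \ref{thm-main} that $f\in V$ iff $Z_\sigma[f](\al)\in J(\al)$ a.e.  The implication $(ii)\Rightarrow(i)$ is then immediate: given the fiberwise bounds, integrate the pointwise inequality (valid for $Z_\sigma[f](\al)\in J(\al)$) to obtain the integrated inequality above, which transfers back to $V$ by the isometry.

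For the converse $(i)\Rightarrow(ii)$, the main obstacle is localization: one must pass from the integrated inequality valid for every $f\in V$ to a pointwise inequality on $J(\al)$ for a.e.\ $\al$, with the uniform bounds $A,B$.  The strategy is to plug in suitably localized test functions.  Given any measurable $E\subseteq\whG$ and any $h\in L^2(C)$, applying Theorem \ref{thm-main} and the fact that $Z_\sigma$ is onto $L^2(\whG,L^2(C))$, we take $f\in V$ whose Zak transform is $\al\mapsto\chi_E(\al)P_J(\al)h$; substituting into the integrated inequality, dividing through by $m_{\whG}(E)$ and invoking Lebesgue differentiation (valid because $\whG$ is compact and second countable, so a differentiation basis is available), we obtain at a.e.\ $\al$
\[
A\|P_J(\al)h\|^2\le\sum_{\phi\in\A}|\langle P_J(\al)h,Z_\sigma[\phi](\al)\rangle|^2\le B\|P_J(\al)h\|^2\,.
\]
Choosing a countable dense set of $h$'s in $L^2(C)$ and taking a common full-measure $\al$-set, we conclude by continuity that the inequality holds for every $g=P_J(\al)h\in J(\al)$; since $Z_\sigma[\phi](\al)\in J(\al)$ (so the inner products are unaffected by $P_J(\al)$), this exactly says that $\{Z_\sigma[\phi](\al)\}_{\phi\in\A}$ is a frame for $J(\al)$ with bounds $A,B$, completing the proof.
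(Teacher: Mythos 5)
Your proposal is correct and follows essentially the same route as the paper: the identity expressing $\langle f,\Pi_\sigma(\g)\phi\rangle$ as the Fourier coefficients of $F_{f,\phi}(\al)=\langle Z_\sigma[f](\al),Z_\sigma[\phi](\al)\rangle_{L^2(C)}$ followed by Plancherel is exactly the paper's computation \eqref{eq-1}, and your $(ii)\Rightarrow(i)$ argument (integrate the fiberwise inequality and transfer back through the isometry) is the paper's as well. The only difference is that for $(i)\Rightarrow(ii)$ the paper defers the localization step to \cite{Bow00} and \cite[Theorem 4.1]{CP10}, whereas you spell it out with the test functions $Z_\sigma[f]=\chi_E\,P_J(\cdot)h$; this is precisely the standard argument those references contain. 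Two points should be tightened. First, Parseval for $F_{f,\phi}$ presupposes $F_{f,\phi}\in L^2(\whG)$, while a priori it only lies in $L^1(\whG)$; as in the paper, one first observes that under the hypothesis in force (the frame upper bound in one direction, the fiberwise upper bound in the other) the relevant sum or integral is finite, hence $\widehat{F_{f,\phi}}\in\ell^2(\G)$ and $F_{f,\phi}\in L^2(\whG)$, and only then applies Plancherel (equivalently, one notes the identity holds with values in $[0,+\infty]$). Second, the appeal to Lebesgue differentiation on $\whG$ is both unnecessary and not obviously justified, since the existence of a differentiation basis for Haar measure on a general compact metrizable group is delicate; because your integrated inequality holds for \emph{every} measurable $E\subseteq\whG$, the a.e.\ pointwise inequality follows from the elementary argument of integrating over the set where it fails, so no differentiation theorem is needed. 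With these routine repairs your argument is complete and coincides in substance with the paper's proof.
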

\begin{proof}
 By the isometry property of $Z_{\sigma}$ and by \eqref{quasi-periodic}   we have
 \begin{align}\label{eq-1}
  \sum_{\phi\in\A}\sum_{\g\in\Gamma}|\langle f, \Pi_{\sigma}(\g)\phi\rangle_{L^2(\X)}|^2&=
  \sum_{\phi\in\A}\sum_{\g\in\Gamma}|\langle Z_{\sigma}[f], Z_{\sigma}[\Pi_{\sigma}(\g)\phi]\rangle_{L^2(\widehat{\Gamma}, L^2(C))}|^2\nonumber\\
  &=\sum_{\phi\in\A}\sum_{\g\in\Gamma}|\int_{\widehat{\Gamma}}\langle Z_{\sigma}[f](\alpha), Z_{\sigma}[\phi](\alpha)\rangle_{L^2(C)}\overline{(\g, \alpha)}\,dm_{\widehat{\G}}(\alpha)|^2.
 \end{align}
Assume $(i)$ holds. For a fixed $\phi\in\A$, call $F(\alpha):=\langle Z_{\sigma}[f](\alpha), Z_{\sigma}[\phi](\alpha)\rangle_{L^2(C)}$ and note  that
$F\in L^1(\widehat{\Gamma})$.
Then,   by the Pontrjagin Duality
\cite[Theorem 1.7.2]{Rud62},
 $$\widehat{F}(\g)=\int_{\widehat{\Gamma}}\langle Z_{\sigma}[f](\alpha), Z_{\sigma}[\phi](\alpha)\rangle_{L^2(C)}\overline{(\g, \alpha)}\,dm_{\widehat{\G}}(\alpha)$$ and thus,  \eqref{eq-1} implies
$$\sum_{\g\in\Gamma}|\widehat{F}(\g)|^2<+\infty.$$
As a consequence we have that $\widehat{F}\in \ell^2(\G)$ and then $F\in L^2(\widehat{\G})$. By Plancherel's Theorem we then get,
\begin{align*}
\sum_{\g\in\Gamma}|\int_{\widehat{\Gamma}}\langle Z_{\sigma}[f](\alpha), &Z_{\sigma}[\phi](\alpha)\rangle_{L^2(C)}\overline{(\g, \alpha)}\,dm_{\widehat{\G}}(\alpha)|^2\,dm_\G(\g)\\&=\|\widehat{F}\|^2_{\ell^2(\Gamma)}=\|F\|^2_{L^2(\widehat{\Gamma})}\\
&=\int_{\widehat{\Gamma}}|\langle Z_{\sigma}[f](\alpha), Z_{\sigma}[\phi](\alpha)\rangle_{L^2(C)}|^2\,dm_{\widehat{\G}}(\alpha).
\end{align*}
From here the arguments leading to $(ii)$ are an effortless adaptation of those in \cite{Bow00} and \cite[Theorem 4.1]{CP10}, hence we leave them for the reader.

Conversely, from $(ii)$ and  since $Z_{\sigma}$ is an  isometry, it follows that for every $f\in S_{\sigma}^{\G}(\A)$
and for a.e. $\alpha\in\widehat{\Gamma}$,
\begin{equation}\label{eq-2}
 A\|Z_{\sigma}[f](\alpha)\|^2_{L^2(C)}\leq \sum_{\phi\in\A}|\langle Z_{\sigma}[f](\alpha), Z_{\sigma}[\phi](\alpha)\rangle_{L^2(C)}|^2
\leq B\|Z_{\sigma}[f](\alpha)\|^2_{L^2(C)}.
\end{equation}
Therefore, integrating over $\widehat{\Gamma}$, \eqref{eq-2} becomes
$$A\|Z_{\sigma}[f]\|^2_{L^2(\widehat{\Gamma}, L^2(C))}\leq \sum_{\phi\in\A}\int_{\widehat{\Gamma}}|\langle Z_{\sigma}[f](\alpha), Z_{\sigma}[\phi](\alpha)\rangle_{L^2(C)}|^2
\leq B\|Z_{\sigma}[f]\|^2_{L^2(\widehat{\Gamma}, L^2(C))}$$
and in particular, we have that $\alpha\mapsto\langle Z_{\sigma}[f](\alpha), Z_{\sigma}[\phi](\alpha)\rangle$ belongs to $L^2(\widehat{\Gamma})$ for each $\phi\in\A$. Thus, with similar arguments to those we used in the other implication, we conclude
$$\sum_{\phi\in\A}\sum_{\g\in\Gamma}|\langle f, \Pi_{\sigma}(\g)\phi\rangle_{L^2(\X)}|^2=
\sum_{\phi\in\A}\int_{\widehat{\Gamma}}|\langle Z_{\sigma}[f](\alpha), Z_{\sigma}[\phi](\alpha)\rangle_{L^2(C)}|^2,$$
and hence, the frame property of $E_{\sigma}^{\G}(\A)$ follows immediately.
\end{proof}

In \cite{HSWW10} it was proven that the unitary representation $\Pi_{\sigma}$ is dual integrable with bracket map given by $[\psi, \phi](\alpha)= \langle Z_{\sigma}[\psi](\alpha), Z_{\sigma}[\phi](\alpha)\rangle_{L^2(C)}$.
Then, if the set of generators $\A$ has only one single function, Theorem \ref{thm-frames}
reduces to \cite[Theorem 5.7]{HSWW10} when the unitary representation involved is $\Pi_{\sigma}$:

\begin{corollary}\label{cor-frames}
 Let $\psi\in\ L^2(\X)$ and define $\Omega_{\psi}=\{\alpha\in\widehat{\Gamma}:\, \|Z_{\sigma}[\psi](\alpha)\|^2_{L^2(C)}>0\}$.
 Then,  the following are equivalent:
 \begin{itemize}
  \item [$(i)$] $E_{\sigma}^{\G}(\psi)$ is a frame for $S_{\sigma}^{\G}(\psi)$ with frame bound $0<A\leq B<+\infty$.
  \item [$(ii)$] For almost every $\alpha\in\Omega_{\psi}$,
  $A\leq \|Z_{\sigma}[\psi](\alpha)\|^2_{L^2(C)}\leq B.$
 \end{itemize}
\end{corollary}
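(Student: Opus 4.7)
The plan is to obtain the corollary as a direct specialization of Theorem \ref{thm-frames} to the case $\A=\{\psi\}$, exploiting the fact that in this situation the range function takes values that are at most one-dimensional, in which case the frame inequality collapses to a scalar inequality on the norm.

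First I would invoke the final assertion of Theorem \ref{thm-main}: the range function associated with $V=S_\sigma^\G(\psi)$ is
\[
J(\alpha)=\clspan\{Z_\sigma[\psi](\alpha)\}\quad\text{for a.e. }\alpha\in\whG.
\]
Hence $\dim J(\alpha)=1$ precisely when $\alpha\in\Omega_\psi$, and $J(\alpha)=\{0\}$ for $\alpha\notin\Omega_\psi$. Next I would apply Theorem \ref{thm-frames} with $\A=\{\psi\}$: the system $E_\sigma^\G(\psi)$ is a frame for $V$ with bounds $A,B$ if and only if for a.e.\ $\alpha\in\whG$ the single-element collection $\{Z_\sigma[\psi](\alpha)\}$ is a frame for $J(\alpha)$ with the same bounds $A,B$.

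The remaining step is the pointwise reduction. For $\alpha\notin\Omega_\psi$ the space $J(\alpha)$ is trivial and the frame inequality is vacuous, so only the behaviour on $\Omega_\psi$ matters. For $\alpha\in\Omega_\psi$ every $w\in J(\alpha)$ has the form $w=c\,Z_\sigma[\psi](\alpha)$ for some $c\in\C$, and a direct computation gives
\[
|\langle w,Z_\sigma[\psi](\alpha)\rangle_{L^2(C)}|^2=|c|^2\|Z_\sigma[\psi](\alpha)\|_{L^2(C)}^4,\qquad \|w\|_{L^2(C)}^2=|c|^2\|Z_\sigma[\psi](\alpha)\|_{L^2(C)}^2.
\]
Therefore the frame inequality $A\|w\|^2\le |\langle w,Z_\sigma[\psi](\alpha)\rangle|^2\le B\|w\|^2$ for all such $w$ is equivalent to $A\le \|Z_\sigma[\psi](\alpha)\|_{L^2(C)}^2\le B$, which is exactly the condition in $(ii)$.

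No real obstacle is expected; the only point requiring a line of care is handling the null set where $J(\alpha)=\{0\}$, which is exactly why the statement restricts the pointwise inequality to $\Omega_\psi$ rather than all of $\whG$. Pasting these three observations together yields the equivalence.
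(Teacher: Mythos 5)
Your proposal is correct and follows exactly the route the paper intends: Corollary \ref{cor-frames} is obtained by specializing Theorem \ref{thm-frames} to $\A=\{\psi\}$, using the range function $J(\alpha)=\clspan\{Z_\sigma[\psi](\alpha)\}$ from Theorem \ref{thm-main} and the elementary scalar reduction of the one-vector frame condition on each fiber. The pointwise computation you supply (and the vacuity of the condition off $\Omega_\psi$) is precisely the detail the paper leaves to the reader.
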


In the next theorem we state the analogous result to Theorem \ref{thm-frames} for Riesz bases.

\begin{theorem}\label{thm-riesz}
 Let $\A\subseteq L^2(\X)$ be a countable set and let $J$ be the  measurable range function associated to $V=S_{\sigma}^{\G}(\A)$. Then, the following two conditions are equivalent:
 \begin{itemize}
  \item [$(i)$] $E_{\sigma}^{\G}(\A)$ is a Riesz basis  for $V$ with  bound $0<A\leq B<+\infty$.
  \item [$(ii)$] For almost every $\alpha\in\widehat{\Gamma}$, the set $\{Z_{\sigma} [\phi](\alpha):\,\phi\in\A\}$ is a Riesz basis for $J(\alpha)$ with uniform bounds $A, B$.
 \end{itemize}
\end{theorem}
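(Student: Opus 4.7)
The plan is to mirror the argument of Theorem \ref{thm-frames}, using the isometry $Z_\sigma$ from Proposition \ref{abelian-tau-isometry} and the intertwining identity $Z_\sigma[\Pi_\sigma(\g)\phi]=X_\g Z_\sigma[\phi]$. The completeness half of each Riesz-basis assertion is automatic: $E_\sigma^\G(\A)$ is complete in $V=S_\sigma^\G(\A)$ by definition, while $\{Z_\sigma[\phi](\alpha):\phi\in\A\}$ spans $J(\alpha)$ for a.e.\ $\alpha\in\whG$ by the last statement of Theorem \ref{thm-main}. So what must be established is the equivalence of the two-sided norm inequalities.

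The key identity to set up first is the following. For a finitely supported scalar family $\{a_{\phi,\g}\}_{\phi\in\A,\g\in\G}$, define the trigonometric polynomials $p_\phi(\alpha)=\sum_\g a_{\phi,\g}X_\g(\alpha)$ on $\whG$. Since $\{X_\g\}_{\g\in\G}$ is an orthonormal basis of $L^2(\whG)$, Parseval's identity gives $\sum_{\phi,\g}|a_{\phi,\g}|^2 = \int_{\whG}\sum_\phi|p_\phi(\alpha)|^2\,dm_{\whG}(\alpha)$; on the other hand, Proposition \ref{abelian-tau-isometry} together with \eqref{quasi-periodic} yields
\begin{equation*}
\Big\|\sum_{\phi,\g}a_{\phi,\g}\Pi_\sigma(\g)\phi\Big\|^2_{L^2(\X)}
=\int_{\whG}\Big\|\sum_\phi p_\phi(\alpha)Z_\sigma[\phi](\alpha)\Big\|^2_{L^2(C)}\,dm_{\whG}(\alpha).
\end{equation*}
The implication $(ii)\Rightarrow(i)$ is then immediate: integrating the uniform fiberwise Riesz bound over $\whG$ against $\sum_\phi|p_\phi(\alpha)|^2$ produces exactly the two-sided inequality for $E_\sigma^\G(\A)$.

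For $(i)\Rightarrow(ii)$, the first step is a density argument: since trigonometric polynomials are dense in $L^2(\whG)$, the integrated inequality above extends from polynomial coefficients $p_\phi$ to arbitrary measurable coefficients in $L^\infty(\whG)$ (or $L^2(\whG)$) with only finitely many nonzero. Then, specializing to $p_\phi(\alpha)=c_\phi \chi_E(\alpha)$ for fixed scalars $\{c_\phi\}$ and an arbitrary measurable $E\subseteq\whG$ localizes the inequality to
\begin{equation*}
A\,m_{\whG}(E)\sum_\phi|c_\phi|^2\le\int_E\Big\|\sum_\phi c_\phi Z_\sigma[\phi](\alpha)\Big\|^2_{L^2(C)}\,dm_{\whG}(\alpha)\le B\,m_{\whG}(E)\sum_\phi|c_\phi|^2.
\end{equation*}
Since $E$ is arbitrary, the integrand satisfies the pointwise Riesz inequality for a.e.\ $\alpha\in\whG$ (the set of such $\alpha$ depending, a priori, on $\{c_\phi\}$).

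The main obstacle, as in the frame case, is this order-of-quantifiers issue: one must turn ``for every finite $\{c_\phi\}$, a.e.\ $\alpha$'' into ``a.e.\ $\alpha$, for every finite $\{c_\phi\}$''. I would resolve it by restricting first to finite families of scalars with coefficients in $\mathbb{Q}+i\mathbb{Q}$, which form a countable dense set; the countable union of the associated null sets is still null. Outside this single null set, both sides of the inequality are continuous functions of $\{c_\phi\}$, so the bound extends to arbitrary finite complex coefficients. This yields the uniform fiberwise Riesz-basis property of $\{Z_\sigma[\phi](\alpha):\phi\in\A\}$ in $J(\alpha)$, completing the proof.
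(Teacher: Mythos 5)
Your proposal is correct and follows essentially the same route as the paper: the Zak-transform identity $\|\sum b_{(\g,\phi)}\Pi_\sigma(\g)\phi\|^2=\int_{\whG}\|\sum_\phi P_\phi(\al)Z_\sigma[\phi](\al)\|^2\,dm_{\whG}(\al)$ plus Parseval for the coefficients, with $(ii)\Rightarrow(i)$ by integration and $(i)\Rightarrow(ii)$ by passing from trigonometric-polynomial coefficients to characteristic-function coefficients and then fixing a single null set via countably many (rational) coefficient vectors; the paper merely organizes the converse as a proof by contradiction, outsourcing the quantifier step to \cite[Theorem 4.1]{CP10}. One small caution: your ``density in $L^2(\whG)$'' extension step is not literally enough, since $L^2$-convergence of the coefficient functions does not control $\int_{\whG}\|\sum_\phi m_\phi(\al)Z_\sigma[\phi](\al)\|^2\,dm_{\whG}(\al)$; you need approximating polynomials that converge a.e.\ with a uniform $L^\infty$ bound (so dominated convergence applies), which is exactly \cite[Lemma 4.4]{CP10} invoked in the paper, and it suffices for your purposes since you only ever use coefficients of the form $c_\phi\chi_E$.
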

\begin{proof}
We proceed as in \cite[Theorem 4.3]{CP10} and \cite{Bow00}. Let $\{b_{(\g,\phi)}\}_{\g\in\Gamma, \phi\in\A}$ be a scalar sequence with finite support.  Then we  have,
 \begin{align}\label{eq:riesz}
  \|\sum_{\g\in\Gamma, \phi\in\A}&b_{(\g,\phi)}\Pi_\sigma(\g)\phi\|_{L^2(\X)}^2
  =\|\sum_{\g\in\Gamma, \phi\in\A}b_{(\g,\phi)}Z_\sigma[\Pi_\sigma(\g) \phi]\|^2\nonumber\\
  &=\int_{\widehat{\G}}\|\sum_{\phi\in\A}[\sum_{\g\in\Gamma}b_{(\g,\phi)}X_\g(\alpha)] Z_{\sigma}[\phi](\alpha)\|^2_{L^2(C)}\,dm_{\widehat{\G}}(\alpha).
 \end{align}
 For each $\phi\in\A$, let $P_\phi$ be the trigonometric polynomial $P_\phi=\sum_{\g\in\Gamma}b_{(\g,\phi)}X_\g$. Note that since $\{b_{(\g,\phi)}\}_{\g\in\Gamma, \phi\in\A}$ has finite support, only finitely many polynomials $P_\phi$ are non-zero. In particular,  for every $\alpha\in\widehat{\Gamma}$, the scalar sequence $\{P_\phi(\alpha)\}_{\phi\in\A}$ has finite support and moreover
 \begin{equation}\label{eq:norm-P-phi}
\int_{\widehat{\G}}\|P_\phi(\alpha)\|^2_{\ell^2(\A)}\,dm_{\widehat{\G}}(\alpha)=\sum_{\phi\in\A}\|P_\phi\|^2_{L^2(\widehat{\G})}
=\sum_{\phi\in\A}\sum_{\g\in\Gamma}|b_{(\g,\phi)}|^2,
\end{equation}
for a.e. $\alpha\in\widehat{\G}$.
\newpage
Suppose $(ii)$ holds.  Then, for $\{a_\phi\}_{\phi\in\A}=\{P_\phi(\alpha)\}_{\phi\in\A}$ we have
\begin{equation}\label{eq:b}
A \sum_{\phi\in\A}|P_\phi(\alpha)|^2 \leq \| \sum_{\phi \in \A} P_\phi(\alpha) Z_\sigma[\phi](\alpha)\|^2_{L^2(C)} \leq B \sum_{\phi\in\A}|P_\phi(\alpha)|^2,
\end{equation}
for a.e. $\al\in\widehat{\G}$.
Integrating over $\widehat{\G}$ in \eqref{eq:b} and using  \eqref{eq:riesz} and \eqref{eq:norm-P-phi} we get
$$A\sum_{\phi\in\A}\sum_{\g\in\Gamma}|b_{(\g,\phi)}|^2\leq \|\sum_{\g\in\Gamma, \phi\in\A}b_{(\g,\phi)}\Pi(\g)\phi\|^2
\leq B\sum_{\phi\in\A}\sum_{\g\in\Gamma}|b_{(\g,\phi)}|^2.$$
Hence, $(i)$ holds.

Conversely, suppose, towards a contradiction, that $(ii)$ fails. Then, by a similar argument as the one given in the proof of \cite[Theorem 4.1]{CP10}, there must exist  a measurable set $W\subseteq \widehat{\G}$ with $m_{\widehat{\G}}(W)>0$, a sequence with finite support
$a=\{a_\phi\}_{\phi\in\A}$ and $\varepsilon>0$  such that
\begin{equation}\label{eq:fail-1}
\| \sum_{\phi \in \A} a_\phi Z_\sigma[\phi](\alpha)\|^2_{L^2(C)} >(B+\varepsilon)\|a\|^2_{\ell^2(\A)}, \quad\forall\,\alpha\in W
\end{equation}
or
\begin{equation}\label{eq:fail-2}
\| \sum_{\phi \in \A} a_\phi Z_\sigma[\phi](\alpha)\|^2_{L^2(C)} <(A-\varepsilon)\|a\|^2_{\ell^2(\A)}, \quad\forall\,\alpha\in W.
\end{equation}

For each $\phi\in\A$ such that $a_\phi\neq0$, let $m_\phi:=a_\phi\chi_W\in L^{\infty}(\widehat{\G})$. By \cite[Lemma 4.4]{CP10} there exists a sequence  of trigonometric polynomials  $\{P_n^{\phi}\}_{n\in\N}$ such that $ P_n^{\phi}(\alpha)\to m_\phi(\alpha)$ when $n\to+\infty$ for a.e. $\alpha\in\widehat{\G}$ and $\|P^{\phi}_n\|_{L^{\infty}(\widehat{\G})}\leq C$ for all $n\in\N$  and some  $C>0$.
For every $n\in\N$, let $\{b_{(\g,\phi)}^n\}_{\g\in\Gamma, \phi\in\A}$ be the sequence of coefficient of $P^{\phi}_n$.
Since,  $E_\sigma(\A)$ is a Riesz sequence we have
\begin{equation}\label{eq:riesz-2}
A\sum_{\g\in\Gamma, \phi\in\A}|b_{(\g,\phi)}^n|^2\leq
\|\sum_{\g\in\Gamma, \phi\in\A}b_{(\g,\phi)}^n\Pi(\g)\phi\|_{L^2(\X)}^2\leq B\sum_{\g\in\Gamma,\phi\in\A}|b_{(\g,\phi)}^n|^2.
\end{equation}
Using  \eqref{eq:norm-P-phi} and \eqref{eq:riesz} we can rewrite \eqref{eq:riesz-2} as
\begin{equation}\label{eq:p_n}
A\|P_n^{\phi}\|^2_{L^2(\widehat{\G})}\leq\int_{\widehat{\G}}\|\sum_{\phi\in\A}P_n^{\phi}(\alpha) Z_\sigma[\phi](\alpha)\|^2_{L^2(C)}\,dm_{\widehat{\G}}(\alpha)\leq B\|P_n^{\phi}\|^2_{L^2(\widehat{\G})}.
\end{equation}
Note that \eqref{eq:p_n} extends to $m_\phi$ leading to
$$A\|m_\phi\|^2_{L^2(\widehat{\G})}\leq\int_{\widehat{\G}}\|\sum_{\phi\in\A}m_\phi(\alpha) Z_\sigma[\phi](\alpha)\|^2_{L^2(C)}\,dm_{\widehat{\G}}(\alpha)\leq B\|m_\phi\|^2_{L^2(\widehat{\G})}.$$
This last inequality contradicts \eqref{eq:fail-1}, \eqref{eq:fail-2} because by integrating over $\widehat{\G}$ in \eqref{eq:fail-1} and  \eqref{eq:fail-2} we get
$$\int_{\widehat{\G}}\|\sum_{\phi\in\A}m_\phi(\alpha) Z_\sigma[\phi](\alpha)\|^2_{L^2(C)}\,dm_{\widehat{\G}}(\alpha)
\geq(B+\varepsilon)\|m_\phi\|^2_{L^2(\widehat{\G})}$$ and
$$\int_{\widehat{\G}}\|\sum_{\phi\in\A}m_\phi(\alpha) Z_\sigma[\phi](\alpha)\|^2_{L^2(C)}\,dm_{\widehat{\G}}(\alpha)
\leq(A-\varepsilon)\|m_\phi\|^2_{L^2(\widehat{\G})}.$$
\end{proof}

As for the frame case, when $\A$  has only one element, Theorem \ref{thm-riesz} recovers \cite[Proposition 5.3]{HSWW10}.

\begin{corollary}\label{cor-riesz}
 Let $\psi\in\ L^2(\X)$. Then, they are equivalent
 \begin{itemize}
  \item [$(i)$]  $E_{\sigma}^{\G}(\psi)$ is a Riesz basis for $S_{\sigma}^{\G}(\psi)$ with  bounds $0<A\leq B<+\infty$.
  \item [$(ii)$] For almost every $\alpha\in\widehat{\G}$,
  $A\leq \|Z_{\sigma}[\psi](\alpha)\|^2_{L^2(C)}\leq B.$
 \end{itemize}
\end{corollary}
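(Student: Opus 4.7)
The plan is to deduce Corollary \ref{cor-riesz} as a direct specialization of Theorem \ref{thm-riesz} to the singleton generator $\A=\{\psi\}$, combined with the elementary observation that a one-element sequence in a Hilbert space is a Riesz basis for its closed span precisely when the element's squared norm is bounded above and below by the desired constants. No further machinery is required.

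More precisely, first I would apply Theorem \ref{thm-riesz} with $\A=\{\psi\}$. By the moreover clause of Theorem \ref{thm-main}, the range function associated with $V=S_\sigma^\G(\psi)$ is $J(\al)=\clspan\{Z_\sigma[\psi](\al)\}$ for a.e. $\al\in\whG$, so $J(\al)$ is either $\{0\}$ or one-dimensional. Theorem \ref{thm-riesz} then asserts that (i) holds with bounds $A,B$ if and only if, for a.e. $\al\in\whG$, the single vector $Z_\sigma[\psi](\al)$ is a Riesz basis for $J(\al)$ with uniform bounds $A,B$.

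Next I would verify the auxiliary fact that for any vector $v$ in a Hilbert space, the singleton $\{v\}$ is a Riesz basis for $\clspan\{v\}$ with bounds $0<A\le B<+\infty$ if and only if $A\le\|v\|^2\le B$. Indeed, for an arbitrary scalar $a$, the Riesz basis inequalities read $A|a|^2\le \|av\|^2=|a|^2\|v\|^2\le B|a|^2$, which hold for every $a\in\C$ exactly when $A\le\|v\|^2\le B$; note that $A>0$ forces $v\neq 0$, which in turn guarantees $\{v\}$ is complete in $\clspan\{v\}$. Applied pointwise a.e. to $v=Z_\sigma[\psi](\al)\in L^2(C)$, this converts the pointwise Riesz basis condition from Theorem \ref{thm-riesz} into the pointwise norm bounds $A\le\|Z_\sigma[\psi](\al)\|^2_{L^2(C)}\le B$ of (ii), and vice versa.

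I do not anticipate any real obstacle: the entire argument is a translation through the two equivalences (Theorem \ref{thm-riesz} and the singleton fact). The only mildly delicate point is making sure the set $\Omega_\psi=\{\al:\|Z_\sigma[\psi](\al)\|^2>0\}$ has full measure, but this is automatic because the lower bound $A>0$ forces $Z_\sigma[\psi](\al)\neq 0$ almost everywhere under either condition.
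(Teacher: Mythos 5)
Your proposal is correct and follows exactly the route the paper intends: the corollary is stated there as the single-generator specialization of Theorem \ref{thm-riesz}, and your reduction via the range function $J(\al)=\clspan\{Z_\sigma[\psi](\al)\}$ together with the elementary fact that a singleton $\{v\}$ is a Riesz basis for its span with bounds $A,B$ iff $A\le\|v\|^2\le B$ (forcing $v\neq0$, hence $Z_\sigma[\psi](\al)\neq0$ a.e.) is precisely the omitted argument.
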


In the next theorem, we prove that each $(\G, \sigma)$-invariant space can be decomposed into an orthogonal sum of principal
$(\G, \sigma)$-invariant spaces. This is not new or even surprising so far, since it follows from the well-known fact that  every unitary representation is the orthogonal sum of cyclic representations (cf. \cite[Proposition 3.3]{Fol95}). But we shall actually prove more, namely  that  each principal $(\G, \sigma)$-invariant space involved in the decomposition can be chosen to be generated by a function $\psi$ such that $E_{\sigma}^{\G}(\psi)$ is a Parseval frame for $S_{\sigma}^{\G}(\psi)$.   This result is based in the Decomposition Theorem for MI spaces \cite[Theorem 2.6]{BR14} (see also \cite{Hel86}).

\begin{theorem}\label{thm-dec}
 Let $V\subseteq  L^2(\X)$ be a $(\G, \sigma)$-invariant space. Then, there exist a sequence of functions $\{\psi_n\}_{n\in\N}\subseteq  L^2(\X)$  such that $V$ can be decomposed into an orthogonal sum
$$V=\bigoplus_{n=1}^\infty S_{\sigma}^{\G}(\psi_n)$$
and, for each $n\in\N$,  $E_{\sigma}^{\G}(\psi_n)$ is a Parseval frame for $S_{\sigma}^{\G}(\psi_n)$.
\end{theorem}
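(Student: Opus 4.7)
The plan is to lift the problem to the setting of multiplicatively invariant subspaces of $L^2(\widehat{\G},L^2(C))$ via the Zak transform $Z_\sigma$, invoke the Decomposition Theorem for MI spaces from \cite{BR14} cited in the statement, and then pull the decomposition back to $L^2(\X)$. By Proposition \ref{abelian-tau-isometry} together with Lemma \ref{lemma-connection-MI-spaces}, the image $M:=Z_\sigma[V]$ is a $D$-MI subspace of $L^2(\widehat{\G},L^2(C))$ with respect to the determining set $D=\{X_\g\}_{\g\in\G}$. The intertwining identity $Z_\sigma[\Pi_\sigma(\g)\psi]=X_\g Z_\sigma[\psi]$ combined with the fact that $Z_\sigma$ is a unitary isomorphism ensures that cyclic $(\G,\sigma)$-invariant subspaces $S_\sigma^\G(\psi)\subseteq V$ correspond to principal MI spaces $M_D(Z_\sigma[\psi])\subseteq M$, and that orthogonal direct sums on one side transport verbatim to orthogonal direct sums on the other.

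Next I would apply the Decomposition Theorem for MI spaces (Theorem 2.6 of \cite{BR14}) to $M$ to obtain a sequence $\{\Phi_n\}_{n\in\N}\subseteq M$ such that $M=\bigoplus_{n=1}^\infty M_D(\Phi_n)$ and, for each $n$, the system $\{X_\g\Phi_n\}_{\g\in\G}$ is a Parseval frame for $M_D(\Phi_n)$. Setting $\psi_n:=Z_\sigma^{-1}[\Phi_n]$, Lemma \ref{lemma-connection-MI-spaces} gives $Z_\sigma[S_\sigma^\G(\psi_n)]=M_D(\Phi_n)$, and the unitarity of $Z_\sigma$ turns the orthogonal MI decomposition of $M$ into the desired orthogonal decomposition $V=\bigoplus_{n=1}^\infty S_\sigma^\G(\psi_n)$.

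It remains to verify that each $E_\sigma^\G(\psi_n)$ is a Parseval frame for $S_\sigma^\G(\psi_n)$, for which I would apply Corollary \ref{cor-frames} with a single generator. Concretely, the Parseval property of $\{X_\g\Phi_n\}_{\g\in\G}$ in $M_D(\Phi_n)$ translates, via Plancherel on $\widehat{\G}$ and the fibrewise range function description of Theorem \ref{thm-main}, into the condition $\|Z_\sigma[\psi_n](\alpha)\|_{L^2(C)}^2\in\{0,1\}$ for a.e. $\alpha\in\widehat{\G}$; this is precisely the equivalent condition in Corollary \ref{cor-frames} for $E_\sigma^\G(\psi_n)$ to be a Parseval frame on its generated space, with bounds $A=B=1$ on $\W_{\psi_n}$.

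The main obstacle is essentially bookkeeping: one must verify carefully that the Parseval frame property produced by the MI Decomposition Theorem of \cite{BR14}, which is a statement about the action of the determining set $D=\{X_\g\}$ on fibres of a vector-valued $L^2$-space, is transported by $Z_\sigma^{-1}$ to the Parseval frame property of the $\Pi_\sigma$-orbit in $L^2(\X)$. Once the intertwining $Z_\sigma\Pi_\sigma(\g)=X_\g Z_\sigma$ and Corollary \ref{cor-frames} are in hand, the identification of the two notions of Parseval is direct, and no further delicate analytic estimates are required.
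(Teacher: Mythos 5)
Your proposal is correct and takes essentially the same route as the paper: transfer $V$ via $Z_\sigma$ and Lemma \ref{lemma-connection-MI-spaces} to a $D$-MI space, apply the decomposition theorem \cite[Theorem 2.6]{BR14}, pull back through the unitary $Z_\sigma$, and conclude the Parseval property from Corollary \ref{cor-frames}. The only cosmetic difference is that \cite[Theorem 2.6]{BR14} is invoked in the paper as giving generators $\Phi_n$ with $\|\Phi_n(\al)\|_{L^2(C)}\in\{0,1\}$ a.e.\ (rather than directly the Parseval property of the $D$-orbit, which holds only because $D=\{X_\g\}$ is an orthonormal basis of $L^2(\whG)$), but this norm condition is exactly what you feed into Corollary \ref{cor-frames}, so the argument is the same.
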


\begin{proof}
 Since $V$ is a $(\G, \sigma)$-invariant space, by Lemma \ref{lemma-connection-MI-spaces}, $M:=Z_\sigma[V]$ is a MI space with respect to $D=\{X_\g\}_{\g\in\G}$. Thus, by \cite[Theorem 2.6]{BR14} $M$ can be decomposed into an orthogonal sum $M=\bigoplus_{n=1}^\infty M_n$ where each $M_n$ is a MI space and the range function associated to $M_n$ is $J_n(\al)=\mathrm{span}\{\Phi_n(\al)\}$  with $\Phi_n\in L^\infty(\widehat{\Gamma}, L^2(C))$ such that $\|\Phi_n(\al)\|_{L^2(C)}\in\{0,1\}$ for a.e. $\al\in\whG$.

 Since, $\whG$ has finite measure, it follows that $\Phi_n\in L^2(\widehat{\Gamma}, L^2(C))$ and $M_n=M_D(\Phi_n)$. Now, define $\psi_n\in L^2(\X)$ such that $Z_\sigma[\psi_n]=\Phi_n$.  Then, since $Z_\sigma$ is an isometric isomorphism and by Lemma \ref{lemma-connection-MI-spaces} we have that  $V=\bigoplus_{n=1}^\infty S_{\sigma}^{\G}(\psi_n)$.
 Moreover, since $\|\Phi_n(\al)\|_{L^2(C)}\in\{0,1\}$ for a.e. $\al\in\whG$, Corollary  \ref{cor-frames} implies that $E_{\sigma}^{\G}(\psi_n)$ is a Parseval frame for $S_{\sigma}^{\G}(\psi_n)$ and the theorem is proven.
\end{proof}

\begin{remark}
 If $\{\psi_n\}_{n\in\N}$ is a sequence as in Theorem \ref{thm-dec} for the $(\G, \sigma)$-invariant space $V$, it follows that $E_{\sigma}^{\G}(\A)$ is a Parseval frame for $V$, where $\A=\{\psi_n:\, n\in\N\}$. In particular this guarantees that every  $(\G, \sigma)$-invariant space $V$ has a frame of the form  $E_{\sigma}^{\G}(\A)$ for some set $\A$.   This was already known when the group involved acts by translations (see \cite{Bow00, BR14, CP10}).
\end{remark}

So far, we know that every $(\G, \sigma)$-invariant space has a frame of the form $E_{\sigma}^{\G}(\A)$ for some set $\A$.
But the statement is not true anymore if we replace frame for Riesz basis. That is, not every $(\G, \sigma)$-invariant space has a Riesz basis of the form $E_{\sigma}^{\G}(\A)$ for some set $\A$. We now  provide  an example of this situation.

\begin{example}
 Let $W_1\subseteq \whG$ be a measurable set with $0<m_{\whG}(W_1)<1/2$. In particular, we have $m_{\whG}(\whG\setminus W_1)>0$. Further, let $W_2\subseteq C$ be a measurable set with $0<\mu(W_2)<+\infty$ and consider the function $\Phi(\al)(x):=\chi_{W_1}(\al)\chi_{W_2}(x)$. It is immediately seen that $\Phi\in L^2(\whG, L^2(C))$ and that for all $\al\in\whG$, $\|\Phi(\al)\|^2_{L^2(C)}=\chi_{W_1}(\al)\mu(W_2)$. Define $\psi\in L^2(\X)$ by $Z_\sigma[\psi]=\Phi$ and put $V=S_{\sigma}^{\G}(\psi)$. Then, the following hold:
 \begin{itemize}
  \item [$(i)$] $E_{\sigma}^{\G}(\psi)$ is a frame for $V$ but  not a Riesz basis.
  \item [$(ii)$] If  $\A\subseteq L^2(\X)$ and $V=S_{\sigma}^{\G}(\A)$ then, $E_{\sigma}^{\G}(\A)$ cannot be a Riesz basis for $V$.
 \end{itemize}
Item  $(i)$ simply follows from Corollaries \ref{cor-frames} and \ref{cor-riesz}. To see $(ii)$,
suppose that $\A$ is a set of generator for $V$ and that $E_{\sigma}^{\G}(\A)$ is a Riesz basis for $V$. Then, by Theorem \ref{thm-main}, for a.e.  $\al\in\whG$ , $\{Z_\sigma[\phi](\al):\,\phi\in\A\}$ is a Riesz basis for $J(\al)$ where $J$ is the range function associated to $V$. Thus,
denoting by $|\A|$ the number of elements of $\A$, we have that $|\A|=\dim(J(\al))$ for a.e. $\al\in\whG$.  But, on the other hand,  $V$ has length 1 and then  $|\A|=1$ (cf. Corollary \ref{cor-lenght}).

Now, take $\phi\in V$ such that  $V=S_{\sigma}^{\G}(\phi)$. Then,
since $V$ is generated also by $\psi$ and  $E_{\sigma}^{\G}(\psi)$
is a frame for $V$ we have that $\phi=\sum_{\g\in\G}c_\g\Pi_\sigma
(\g)\psi$ for some sequence of coefficients
$\{c_\g\}_{\g\in\G}\in\ell^2(\G)$. By \eqref{quasi-periodic},
$Z_\sigma[\phi]=\left(\sum_{\g\in\G}c_\g X_\g\right)Z_\sigma[\psi]$
and then, $\{\al\in\whG:\,Z_\sigma[\phi](\al)\neq0\}\subseteq
\{\al\in\whG:\, Z_\sigma[\psi](\al)\neq0\}=W_1$. This
means that $Z_\sigma[\phi]$ vanishes on a set of positive measure of
$\whG$ (namely $\whG\setminus W_1$) and then condition $(ii)$ in
Corollary \ref{cor-riesz} is not satisfied for $\phi$, implying that
$E_{\sigma}^{\G}(\phi)$ cannot be a Riesz basis for $V$.

\end{example}

\section{Closed subgroups acting by translation}\label{sec:TI-spaces}
Let $G$ be a second countable LCA group and let $\G$ be a closed subgroup of $G$ which does not need to be discrete.
We devote this section to study closed subspaces of $L^2(G)$ that are invariant under translations in $\G$.
When $G/\G$ is compact, the structure of theses spaces has been analyzed first in \cite{CP10} for the case when  $\G$ is discrete, and recently in \cite{BR14} when $\G$ is not required to be discrete. The main results of \cite{BR14, CP10}
are a characterization of spaces invariant under translations in terms of range function using fiberization techniques.

Our aim is to show how these spaces can be characterized in terms of range functions without assuming that $G/\G$ is compact.   For this purpose, we shall work with  an appropriate non-discrete generalized Zak  transform $Z$ which will reduce to the mapping $Z_\sigma$ introduced in Section \ref{sec-Zak} when the action $\sigma:\G\times G\to G$ is $\sigma(\g,x)=\g+x$ and $\G$ is discrete.
The mapping $Z$ will be  the one that will replace the fiberization techniques of \cite{BR14, CP10}.
Since we only assume that $\G$ is closed, our results will be applicable to cases where those in \cite{BR14, CP10} are not.

\subsection{Setting}\label{sec-setting}
Let $G$ be a second countable LCA group. As it was pointed out in \cite{BR14} this is equivalent to say that $G$ (or $\wh G$) is $\sigma$-compact and metrizable.
The translation operator by an element $y\in G$ is denoted by $T_y:L^2(G)\to L^2(G)$ and is given by $T_y f(x)=f(x-y)$.

Let $\G\subseteq G$ be a closed subgroup of $G$. We fix $C$ a $m_G$-measurable section of the quotient $G/\G$. Its existence  is guaranteed by \cite[Lemma 1.1]{Mac52} or \cite[Theorem 1]{FG68}.

The classes in $G/\G$ are denoted by $[x]=x+\G$, for $x\in G$.
We define the {\it cross-section} mapping $\tau:G/\G\to C$ by $\tau([x])=[x]\cap C$. Through $\tau$ we  can carry over the topological and algebraic structure of $G/\G$ onto $C$. Therefore, in what follows we will consider $C$ as a topological space with the topology it inherits from $G/\G$ through $\tau$. Thus, $\tau$ is bijective continuous mapping with continuous inverse. Furthermore, we can define a measure $\mu$ on $C$ by
\begin{equation}\label{eq:def-mu}
\mu(E):=m_{G/\G}(\tau^{-1}(E))
\end{equation}
for every set $E$ in the Borel $\sigma$-algebra $\Sigma=\{E\subseteq C\,:\, \tau^{-1}(E) \, \textrm{ is }m_{G/\G}\textrm{-measurable}\}$. Since $m_{G/\G}$ is Borel regular and $\tau$ and $\tau^{-1}$ are continuous,  $\mu$ is also a Borel regular measure on $C$. Moreover, since $C$ is Borel measurable with respect to $m_G$, by \cite[Theorem 1]{FG68}, we have that $\{A\subseteq C:\, A \, \textrm{ is }m_{G}\textrm{-measurable}\}\subseteq \Sigma$. Measures constructed as in \eqref{eq:def-mu} are usually called {\it pushforward   or image measures}. For details on the subject we refer to \cite[Chapter 3, Section 3.6]{Bog07}.

We need the  following version of  the so-called Weil's  formula, \cite[Theorem (28.54)]{HR70}).

\begin{theorem}\label{thm-weil-for-sections}
 Let $G$ be an LCA group and $\G$ be a  closed subgroup of $G$. Consider $C\subseteq G$ a $m_G$-measurable section for the quotient $G/\G$ and let $\mu$ be the measure on $C$ given by \eqref{eq:def-mu}. Then,
 for every $f\in L^1(G)$ one has that
for $\mu$-almost $x\in C$, the function $\g\mapsto f(x+\g)$ is $m_\G$-measurable and belongs to $L^1(\G)$. Furthermore, the function $x\mapsto \int_\G f(x+\g)\,dm_\G(\g)$ is $\mu$-measurable, belongs to $L^1(C):=L^1(C, \mu)$ and, if $m_\G$ and $\mu$ are fixed, the Haar measure on $G$, $m_G$ can be chosen  so that
\begin{equation}\label{eq:wiel-for-sections}
\int_G f(y)\,dm_G(y)=\int_C\int_\G f(x+\g)\,dm_\G(\g)\,d\mu(x).
\end{equation}
\end{theorem}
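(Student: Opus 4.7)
The plan is to reduce the stated identity to the classical form of Weil's formula in Hewitt--Ross \cite[Theorem (28.54)]{HR70}, transported from the quotient $G/\Gamma$ to the section $C$ via the cross-section map $\tau$. The classical Weil formula, once the Haar measures $m_G,m_\Gamma,m_{G/\Gamma}$ are compatibly normalized, asserts that for every $f\in L^1(G)$ the function $\gamma\mapsto f(y+\gamma)$ lies in $L^1(\Gamma)$ for $m_{G/\Gamma}$-a.e.\ class $[y]\in G/\Gamma$, that the map
\[
F([y]):=\int_\Gamma f(y+\gamma)\,dm_\Gamma(\gamma)
\]
is well defined (independent of the representative), measurable, belongs to $L^1(G/\Gamma)$, and satisfies
\[
\int_G f(y)\,dm_G(y)=\int_{G/\Gamma} F([y])\,dm_{G/\Gamma}([y]).
\]
So the only task is to rewrite the right-hand side as an integral over $C$ against $\mu$.

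To do this I would invoke the change of variables formula for pushforward (image) measures, which applies to any measurable bijection whose pushforward is the target measure. By the very definition \eqref{eq:def-mu}, $\mu$ is the pushforward of $m_{G/\Gamma}$ under $\tau^{-1}:G/\Gamma\to C$ (equivalently, $m_{G/\Gamma}=\tau^{\,*}\mu$). Since $\tau$ and $\tau^{-1}$ are continuous for the topology $C$ inherits from $G/\Gamma$, both maps are Borel; hence for any $\mu$-integrable $\phi:C\to\mathbb{C}$
\[
\int_C \phi(x)\,d\mu(x)=\int_{G/\Gamma}\phi(\tau([y]))\,dm_{G/\Gamma}([y]).
\]
Applied to $\phi(x):=\int_\Gamma f(x+\gamma)\,dm_\Gamma(\gamma)$, which is defined for $\mu$-a.e.\ $x\in C$ since $[\tau([y])]=[y]$ transfers the a.e.\ condition from $G/\Gamma$ back to $C$, and since $F([x])=\phi(x)$ for every $x\in C$, we obtain
\[
\int_{G/\Gamma} F([y])\,dm_{G/\Gamma}([y])=\int_C \phi(x)\,d\mu(x),
\]
which combined with the classical Weil formula yields \eqref{eq:wiel-for-sections}. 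The integrability of $\phi$ on $(C,\mu)$ and the measurability of $\gamma\mapsto f(x+\gamma)$ for $\mu$-a.e.\ $x\in C$ follow immediately by transporting the corresponding assertions on $G/\Gamma$ through $\tau$.

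The main conceptual obstacle is the measurability bookkeeping between the three $\sigma$-algebras involved: the Borel $\sigma$-algebra of $G/\Gamma$, the $\sigma$-algebra $\Sigma$ on $C$ defined in the paragraph preceding the theorem, and the relative Borel $\sigma$-algebra on $C$ as a subset of $G$. This is precisely the point where the existence of a measurable section (Feldman--Greenleaf \cite{FG68}, Mackey \cite{Mac52}) and the regularity observations made before the statement (that $\mu$ is Borel regular on $C$ and that $m_G$-measurable subsets of $C$ lie in $\Sigma$) are needed, so that the pushforward description is unambiguous and the classical Weil formula may be transferred. Once these compatibilities are in place the argument is essentially a two-line change of variable, and the correct normalization of $m_G$ for \eqref{eq:wiel-for-sections} to hold is inherited directly from the standard compatibility condition between $m_G$, $m_\Gamma$ and $m_{G/\Gamma}$.
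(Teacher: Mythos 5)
Your proposal is correct and follows essentially the same route as the paper: invoke the classical Weil formula of Hewitt--Ross on $G/\Gamma$, then transfer the quotient integral to $(C,\mu)$ via the change-of-variables identity for the pushforward measure defined by $\tau^{-1}$ (the paper cites Bogachev, Theorem 3.6.1, and also sketches the standard simple-function argument for it). No gaps worth noting.
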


\begin{proof}
 First note that for every $F\in L^1(G/\G)$, $F\circ\tau^{-1}\in L^1(C)$ and
\begin{equation}\label{eq:integration-mu}
\int_{G/\G}F([x])\,dm_{G/\G}([x])=\int_C F(\tau^{-1}(x))\,d\mu(x).
\end{equation}
This is in fact \cite[Theorem 3.6.1]{Bog07}, but it is easily seen
since, by definition of $\mu$, \eqref{eq:integration-mu} holds for
$F=\chi_A$ with $A\subseteq G/\G$ $m_{G/\G}$-measurable. Then, by
linearity, \eqref{eq:integration-mu} holds for simple functions in
$L^1(G/\G)$. Finally, using  standard arguments of measure theory
(cf. \cite[Theorems 10.3 and  10.20]{WZ77}), one can see that
\eqref{eq:integration-mu} extends to all $F\in L^1(G/\G)$. It is
also true that every function  in $ L^1(C)$ is of the
form $F\circ\tau^{-1}$ for some $F\in L^1(G/\G)$.

Now, if $f\in L^1(G)$, \cite[Theorem (28.54)]{HR70} implies that $F([x])=\int_\G f(x+\g)\,dm_\G(\g)$ belongs to $L^1(G/\G)$
and the Haar measures can be chosen in order to have $\int_G f(y)\,dm_G(y)=\int_{G/\G}F([x])\,dm_{G/\G}([x])$. Then, the result follows by applying \eqref{eq:integration-mu}.
\end{proof}

%\newpage
\begin{remark}\label{rem:measures}\noindent
\begin{enumerate}
\item When $\G$ is assumed to be discrete and countable  and $m_\G$ is chosen to be the counting measure, the measure $\mu$ in \eqref{eq:def-mu} is the restriction of $m_G$ to $C$. This follows from an easily adaptation of the proof of \cite[Lemma 2.10]{CP10}. In particular, we have that $0<m_G(C)$.
\item When $\G$ is not discrete, it could be that $0=m_G(C)$. Then,  $\mu\neq m_G|_C$. Indeed, if $\mu$
were $m_G|_C$, the right hand side of
\eqref{eq:wiel-for-sections} would be $0$ for every $f\in L^1(G)$
which is a contraction. As an example, consider
$\G=\R\times\{0\}\subseteq G=\R^2$. Then, $C=\{0\}\times\R$ and it
has zero Lebesgue measure. In this case, formula
\eqref{eq:wiel-for-sections} becomes Fubini's formula,
$\int_{\R^2}f(x,y)\,d(x,y)=\int_{\R}\left(\int_{\R}f(x,y)\,dx\right)\,dy$.
\end{enumerate}
\end{remark}

The annihilator of $\G$ is the (closed) subgroup $\G^*=\{\delta \in \widehat G: (\g,\delta)=1 \ \mbox{for all}\  \g\in \G\}$ of the dual group $\wh G$ of $G$.
Let $\W$ be a $m_{\wh G}$-measurable section of $\wh G/\G^*$.
The section  $\W$ of $\wh G/\G^*$ can be identified with $\whG$, through the bijective mapping  $\pi:\W\to\widehat{\G}$ given by
\begin{equation}\label{dual-H-Omega}
\pi(\w):\G\longrightarrow\C,\qquad
 \pi(\w)(\g)=(\g,\w).
\end{equation}
If $\tilde{\tau}$ is the cross-section mapping for the quotient $\wh G/\G^*$ and the section $\W$, then, $\pi\circ\tilde{\tau}:\wh G/\G^*\to\whG$ is an isomorphism which is in fact an homeomorphism (see \cite[Theorem 2.1.2]{Rud62}).
Furthermore, it can be proven that $m_{\wh G/\G^*}((\pi\circ\tilde{\tau})^{-1}(\cdot))$
defines a Haar measure on $\whG$, that we call $m_{\whG}$. In the same way we saw \eqref{eq:integration-mu}, one can show that for every $F\in L^1(\wh G/\G^*)$,
\begin{equation*}
 \int_{\wh G/\G^*}F([\xi])\,dm_{\wh G/\G^*}([\xi])=\int_{\whG} F((\pi\circ\tilde{\tau})^{-1}(\xi))\,dm_{\whG}(\xi).
\end{equation*}
Now,  we consider on $\W$ the measure $\nu$ defined by \eqref{eq:def-mu} but with $m_{\wh G/\G^*}$ and the cross-section mapping $\tilde{\tau}$ associated to $\wh G/\G^*$ and $\W$. Then, Theorem \ref{thm-weil-for-sections} and the above discussion prove that for every $G\in L^1(\W):=L^1(\W, \nu)$,
\begin{equation}\label{eq:Omega-gamma-dual}
 \int_{\W}G(\w)\,d\nu(\w)=\int_{\whG} G(\pi^{-1}((\cdot, \w))\,dm_{\whG}((\cdot, \w)),
\end{equation}
where $(\cdot, \w)$ are all the elements of $\whG$.

As we specified in Section \ref{sec:LCA-basics}, for each $\g\in\G$, the corresponding character on $\whG$ is denoted by $X_\g$. We then define for each $\g\in\G$, the functions  $\tilde{X}_\g:\W\to\C$ by $\tilde{X}_\g(\w):=X_\g\circ\pi^{-1}((\cdot, \w))=(\g, \w)$. In particular, by the uniqueness of the Fourier transform and \eqref{eq:Omega-gamma-dual}, we conclude that $D=\{\tilde{X}_\g\}_{\g\in\G}$ is a determining set for $L^1(\W)$. This shall be relevant in Subsection \ref{subsec:TI-spaces}.

\subsection{A non-discrete generalized Zak transform}\label{sec-non-discrete-Zak}
Let us keep the notation and the hypotheses on $G$ and  $\G$ of
Subsection \ref{sec-setting}. Throughout this section $\W$ is a
$m_{\wh G}$-measurable section for the quotient $\wh G/\G^*$ and $C$
is a $m_{G}$-measurable section for the quotient $ G/\G$.
 We regard $C$ and $\W$ as measure spaces with measures
$\mu$ and $\nu$ respectively, where both are given by
\eqref{eq:def-mu}. Furthermore. we consider the following normalization of the Haar measures involved: first, we fix $m_{\G}$ such that the Inversion formula \cite[Section 1.5]{Rud62} holds for  $m_{\G}$ and $m_{\whG}=m_{\wh G/\G^*}((\pi\circ\tilde{\tau})^{-1}(\cdot))$. In particular, we have that $m_{\G}$ and $m_{\whG}$ are dual measures, meaning that Plancherel's Theorem is true. Then, for $\mu$ and the already fixed $m_\G$, we choose $m_G$ such that Theorem \ref{thm-weil-for-sections} holds.

\begin{definition}\label{def:Zak-non-discrete}
Let $\psi\in L^1(G)$.  For $\w\in\W$ and $x\in C$ such that $\g\mapsto\psi(x-\g)\in L^1(\G)$ we define the {\it Zak transform} of $\psi$ as
$$Z[\psi](\w)(x):=\int_\G \psi(x-\g)(-\g,\w) \,dm_\G(\g).$$
\end{definition}
Note that, due to Theorem \ref{thm-weil-for-sections},  the above definition makes complete sense for all $\w\in\W$ and for $\mu$-a.e. $x\in C$. Moreover, $Z[\psi](\w)(x)=\calF_\G(\psi(x-\cdot))(\w)$
where $\calF_\G$ denotes the Fourier transform on $\G$.

Our aim is to show that $Z$ defines an isometric isomorphism between $L^2(G)$ and $L^2(\W, L^2(C))$.
We need the following intermediate lemma.  From now on,   $C_c(G)$ will stand for the set of continuous functions on $G$ with compact support.

\begin{lemma}\label{lemma:zak-non-discrete}
 For $\psi\in C_c(G)$, we have $Z[\psi]\in L^2(\W, L^2(C))$ and  $\|\psi\|_{L^2(G)}=\|Z[\psi]\|$.
 Moreover, $Z[T_\g\psi]=\tilde{X}_\g Z[\psi]$ for all $\g\in\G$.
\end{lemma}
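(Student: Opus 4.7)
The plan is to prove the three claims in order: (i) the scalar integral defining $Z[\psi](\w)(x)$ makes sense and the map $\w\mapsto Z[\psi](\w)$ really lands in $L^2(C)$; (ii) the isometry identity; (iii) the quasi--periodicity relation. Since $\psi\in C_c(G)$ and $\G$ is closed in $G$, for every $x\in G$ the function $\g\mapsto\psi(x-\g)$ is continuous with compact support in $\G$, hence belongs to $L^1(\G)\cap L^2(\G)$. Thus $Z[\psi](\w)(x)=\calF_\G(\psi(x-\cdot))(\w)$ is defined for \emph{every} $(\w,x)\in\W\times C$, and the scalar map $(\w,x)\mapsto Z[\psi](\w)(x)$ is jointly measurable on $\W\times C$ (this can be justified by standard approximation of the integral defining $\calF_\G$ by measurable sums, exploiting $\sigma$-compactness of $\G$).

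The isometry is the central computation. I would first exchange integration order via Fubini--Tonelli, which is allowed since the integrand is non-negative and jointly measurable:
\begin{equation*}
\int_{\W}\int_{C}|Z[\psi](\w)(x)|^2\,d\mu(x)\,d\nu(\w)=\int_{C}\int_{\W}|Z[\psi](\w)(x)|^2\,d\nu(\w)\,d\mu(x).
\end{equation*}
For each fixed $x\in C$, I apply the identity \eqref{eq:Omega-gamma-dual} to transport the inner integral over $\W$ into one over $\widehat{\G}$, and then invoke Plancherel's theorem for the pair $(m_\G,m_{\widehat{\G}})$ (which holds by the chosen normalization) to obtain
\begin{equation*}
\int_{\W}|Z[\psi](\w)(x)|^2\,d\nu(\w)=\int_{\widehat{\G}}|\calF_\G(\psi(x-\cdot))(\al)|^2\,dm_{\widehat{\G}}(\al)=\int_{\G}|\psi(x-\g)|^2\,dm_\G(\g).
\end{equation*}
Finally, integrating over $x\in C$ and applying Weil's formula (Theorem \ref{thm-weil-for-sections}) to $f(y)=|\psi(y)|^2\in L^1(G)$, together with the inversion invariance of the Haar measure $m_\G$ (so that $\g\mapsto -\g$ is a measure-preserving substitution in the inner integral), I recover $\|\psi\|_{L^2(G)}^2$. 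This simultaneously yields $Z[\psi]\in L^2(\W,L^2(C))$ with the desired norm equality; vector-valued measurability of $\w\mapsto Z[\psi](\w)$ as a map into the separable space $L^2(C)$ then follows from scalar joint measurability via Pettis' theorem.

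For the quasi-periodicity, I simply compute
\begin{equation*}
Z[T_{\g_0}\psi](\w)(x)=\int_{\G}\psi(x-\g-\g_0)(-\g,\w)\,dm_\G(\g)
\end{equation*}
and perform the change of variable $\g'=\g+\g_0$, using translation invariance of $m_\G$ and the homomorphism property of $(\cdot,\w)$, to extract the factor $(\g_0,\w)=\tilde X_{\g_0}(\w)$ outside the integral, leaving $Z[\psi](\w)(x)$. The only step requiring real care is the bookkeeping of measures and normalizations: one must use the precise choices fixed just before the lemma (namely $m_\G,m_{\widehat{\G}}$ Plancherel-dual via $\pi$, and $m_G$ chosen so that Weil's formula is an equality with $m_\G$ and $\mu$) so that no extraneous constant appears. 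I expect this normalization compatibility to be the main delicate point; the rest is a direct application of Plancherel, Weil, and Fubini.
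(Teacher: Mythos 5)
Your proof is correct and follows essentially the same route as the paper: Weil's formula (Theorem \ref{thm-weil-for-sections}), Plancherel transported to $\W$ via \eqref{eq:Omega-gamma-dual}, and Fubini--Tonelli for the isometry, plus a change of variable (equivalently \eqref{eq:translates-fourier}) for the relation $Z[T_\g\psi]=\tilde X_\g Z[\psi]$. The only cosmetic differences are that you run the computation from $\|Z[\psi]\|$ back to $\|\psi\|_{L^2(G)}$ and justify measurability via Tonelli and Pettis, whereas the paper starts from $\|\psi\|^2_{L^2(G)}$ and uses the joint continuity of $Z[\psi]$ on $\W\times C$.
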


\begin{proof}
First note that since $\psi\in C_c(G)$, $Z[\psi]$ is a continuous function on both variables.
Now, by Theorem \ref{thm-weil-for-sections}, we know that
\begin{equation}\label{eq:weil}
\int_G|\psi(y)|^2\,dm_G(y)=\int_C\int_\G |\psi(x-\g)|^2\,dm_\G(\g)\,d\mu(x).
\end{equation}
This implies that for $\mu$-a.e. $x\in C$, $\psi(x-\cdot)\in L^2(\G)$. By  Plancherel's Theorem and \eqref{eq:Omega-gamma-dual}, we have
\begin{equation}\label{eq:plancherel}
\int_\G |\psi(x-\g)|^2\,dm_\G(\g)=\int_\W |\calF_\G(\psi(x-\cdot))(\w)|^2\,d\nu(\w).
\end{equation}
Thus, we can replace \eqref{eq:plancherel} in \eqref{eq:weil} and then, since $Z[\psi]$ is continuous, we can apply Fubini's Theorem to obtain
$$\int_G|\psi(y)|^2\,dm_G(y)=\int_\W \int_C|\calF_\G(\psi(x-\cdot))(\w)|^2\,d\mu(x)\,d\nu(\w).$$
From here, we conclude that $Z[\psi]\in L^2(\W, L^2(C))$ and  $\|\psi\|_{L^2(G)}=\|Z[\psi]\|$.

To see that $Z[T_\g\psi]=\tilde{X}_\g Z[\psi]$, use \eqref{eq:translates-fourier} to obtain,
\begin{align*}
 Z[T_{\g}\psi](\w)(x)&=\mathcal{F}_\G[T_{\g}\psi(x-\cdot)](\w)\\
 &=\mathcal{F}_\G[\psi(x-(\g+\cdot))](\w)=(\g,\w)\mathcal{F}_\G[\psi(x-\cdot)](\w)\\
 &=(\g,\w)Z[\psi](\w)(x). \qedhere
 \end{align*}
\end{proof}

As a consequence of Lemma \ref{lemma:zak-non-discrete}, $Z:C_c(G) \to L^2(\W, L^2(C))$ defines an isometry that can be extended by density to the whole $L^2(G)$. We keep on denoting by $Z$ the extension of $Z:C_c(G) \to L^2(\W, L^2(C))$ to $L^2(G)$. As we shall show in the upcoming result, $Z$ turns out to be an isomorphism.

\begin{proposition}\label{prop:zak-non-discrete}
 Let $Z: L^2(G) \to L^2(\W, L^2(C))$ be the extension to $L^2(G)$ of the mapping  of Lemma \ref{lemma:zak-non-discrete}.
 Then, $Z$ is an isometric isomorphism satisfying $Z[T_\g\psi]=\tilde{X}_\g Z[\psi]$ for all $\g\in\G$ and all $\psi\in L^2(G)$.
\end{proposition}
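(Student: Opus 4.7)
The plan is to extend $Z$ to $L^2(G)$ by density, and then to prove surjectivity by exhibiting explicit preimages for a dense family of elementary tensors in $L^2(\W, L^2(C))$.

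\emph{Extension and intertwining.} Lemma~\ref{lemma:zak-non-discrete} provides $Z$ as a linear isometry on $C_c(G)$ satisfying $Z[T_\g\psi] = \tilde X_\g Z[\psi]$. Since $C_c(G)$ is dense in $L^2(G)$, $Z$ admits a unique continuous extension to $L^2(G)$, automatically an isometry into $L^2(\W, L^2(C))$. The intertwining identity survives this extension because both $T_\g$ and multiplication by $\tilde X_\g \in L^\infty$ are bounded on the respective $L^2$ spaces. Moreover, a verbatim repetition of the proof of Lemma~\ref{lemma:zak-non-discrete}, using Weil's formula (Theorem~\ref{thm-weil-for-sections}), Plancherel on $\G$, and the identification $\pi:\W\to\whG$, shows that the pointwise formula of Definition~\ref{def:Zak-non-discrete} produces the same operator for every $\psi\in L^1(G)\cap L^2(G)$, not only for $\psi \in C_c(G)$.

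\emph{Surjectivity via elementary tensors.} Since $Z$ is an isometry, $\mathrm{Range}(Z)$ is closed, so it suffices to exhibit a dense subfamily of $L^2(\W, L^2(C))$ inside it. Given $h \in L^1(\G)\cap L^2(\G)$ and $f \in L^1(C)\cap L^2(C)$, every $y\in G$ has a unique decomposition $y = x + \g$ with $x \in C$ and $\g \in \G$; set $\psi(y) := f(x) h(\g)$. Weil's formula gives $\|\psi\|_{L^p(G)} = \|f\|_{L^p(C)} \|h\|_{L^p(\G)}$ for $p=1,2$, hence $\psi\in L^1(G)\cap L^2(G)$. Applying the pointwise formula and substituting $\g\mapsto -\g$,
\[ Z[\psi](\w)(x) \;=\; f(x)\int_\G h(\g)(\g,\w)\,dm_\G(\g) \;=\; f(x)\,g(\w), \]
where $g \in L^2(\W)$ is the image of $h$ under the unitary map $L^2(\G)\to L^2(\W)$ arising from Plancherel and $\pi$. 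As $f$ ranges over the dense subspace $L^1(C)\cap L^2(C)\subseteq L^2(C)$ (by $\sigma$-finiteness of $\mu$) and $g$ over a dense subspace of $L^2(\W)$ (the image of $L^1(\G)\cap L^2(\G)$ under a unitary map), and finite-rank tensors are dense in $L^2(\W, L^2(C))$, surjectivity follows.

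\emph{Main obstacle.} The essential technical point is confirming that the pointwise formula of Definition~\ref{def:Zak-non-discrete} agrees with the abstract density extension on the whole class $L^1(G)\cap L^2(G)$---the natural habitat of the construction---and carefully tracking Fourier conventions through the cross-section isomorphism $\pi$. Crucially, this circumvents the fact that the inclusion $C\hookrightarrow G$ need not be continuous (so the construction cannot be carried out inside $C_c(G)$), while still delivering $\psi$ with enough integrability for the pointwise Zak formula to apply. Once this is in place, Weil's formula and Plancherel do all the heavy lifting.
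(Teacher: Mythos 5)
Your overall strategy is the same as the paper's: extend the isometry of Lemma \ref{lemma:zak-non-discrete} by density (the intertwining relation passes to the closure), and prove surjectivity by exhibiting, via the decomposition $y=x+\g$ with $x\in C$, $\g\in\G$, preimages of a family spanning a dense subspace of $L^2(\W,L^2(C))$. The paper's dense family is $\Phi(\w)(x)=\calF_\G(\Psi(\cdot\,;x))(\w)$ with $\Psi\in C_c(\G\times C)$ and preimage $\psi(y)=\Psi(\tau([y])-y;\tau([y]))$; your elementary tensors are exactly this construction with $\Psi(\g;x)=f(x)h(-\g)$, so the route is not genuinely different.

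The gap sits precisely where the technical weight of the paper's argument lies. You never justify that $\psi(y):=f(\tau([y]))\,h(y-\tau([y]))$ is an $m_G$-measurable function on $G$, nor that Weil's formula may legitimately be applied to it. Measurability is not automatic: one needs that $y\mapsto\tau([y])$ is measurable from $(G,m_G)$ into $(C,\mu)$ and that $y\mapsto y-\tau([y])$ is measurable from $(G,m_G)$ into $(\G,m_\G)$ (the paper proves this using that $m_G$-measurable subsets of $C$ lie in $\Sigma$ and that the Borel sets of $\G$ are Borel in $G$); moreover, since your $f$ and $h$ are only a.e.-defined, you also need that preimages of $(\mu\times m_\G)$-null sets under the decomposition map are $m_G$-null — i.e. the very identification of measures encoded in Weil's formula — before $\psi$ is even well defined. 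Next, Theorem \ref{thm-weil-for-sections} is stated for functions already known to lie in $L^1(G)$, so ``Weil's formula gives $\|\psi\|_{L^p(G)}=\|f\|_{L^p(C)}\|h\|_{L^p(\G)}$'' is circular as written; the paper avoids this by taking $\Psi$ bounded with compact support and truncating over a compact exhaustion of $G$ with monotone convergence, whereas your unbounded tensors require an additional truncation in the value (or restricting to bounded $f,h$ with finite-measure support, which is still dense). Finally, the ``verbatim repetition'' of Lemma \ref{lemma:zak-non-discrete} on $L^1(G)\cap L^2(G)$ is not quite verbatim: that proof uses continuity of $Z[\psi]$ to invoke Fubini, so for general $\psi\in L^1\cap L^2$ you must supply joint measurability of $(\w,x)\mapsto\calF_\G(\psi(x-\cdot))(\w)$ and then identify the resulting isometry with the density extension (both agree on the dense set $C_c(G)$, and both are $L^2$-isometric on $L^1\cap L^2$, so they coincide). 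All of this is repairable along the paper's lines, but as written these steps — the actual content of the paper's Step 2 — are missing.
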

\vspace{-2ex}
\begin{proof}
We only need to prove that $Z$ is onto. This will require to deal with some technical issues but roughly speaking, the idea is to show that there is a dense set of $L^2(\W, L^2(C))$ contained in the image of $Z$.
We structure the proof on  steps.

{\bf Step 1}: \noindent
Consider the set $\calD:=\{\Phi(\w;x)=\calF_\G(\Psi(\cdot;x))(\w):\,\,\Psi\in C_c(\G\times C)\}$. Then, by Plancherel's Theorem, $\calD\subseteq C(\W \times C)\cap L^2(\W \times C)$. We shall show now that $\calD$ is dense in $L^2(\W \times C)$. Towards this end, take $\Phi\in L^2(\W \times C)$ and $\varepsilon>0$. Now, choose $\Phi_1(\w;x)=\sum_{j=1}^n \alpha_jf_j(\w)g_j(x)$ with $f_j\in L^2(\W)$, $g_j\in L^2(C)$ and $\alpha_j\in\C$ for each  $1\leq j\leq n$ such that $\|\Phi-\Phi_1\|_{L^2(\W\times C)}<\varepsilon/2$. This can be done since the set $\textrm{span}\{f.g:\, f\in L^2(\W), \, g\in L^2(C)\}$  is dense in $ L^2(\W\times C)$ (e.g. \cite[pp. 51]{RS80}).

Put $\Psi_1(\g;x)=\sum_{j=1}^n \alpha_j\calF_\G^{-1}(f_j)(\g)g_j(x)$. Then, $\Psi_1\in
L^2(\G\times C)$ and, due to the density of $C_c(\G\times C)$ in
$L^2(\G\times C)$, we can choose $\Psi\in C_c(\G\times C)$ with
$\|\Psi_1-\Psi\|_{L^2(\G\times C)}<\varepsilon/2$. If we call
$\Phi_2(\w;x)=\calF_\G(\Psi(\cdot;x))(\w)\in\calD$, then, by
Plancherel's and  Fubini's  Theorems we have that
$\|\Phi_1-\Phi_2\|_{L^2(\W\times C)}=\|\Psi_1-\Psi\|_{L^2(\G\times
C)}<\varepsilon/2$. Therefore, $\|\Phi-\Phi_2\|_{L^2(\W\times
C)}<\varepsilon$ and consequently, $\calD$ is dense in $L^2(\W
\times C)$. Finally note that, since $\wh G/\G^*$ is
$\sigma$-compact, so is $\W$. Then, by Proposition
\ref{prop:isom-L-2}, $L^2(\W, L^2(C))=L^2(\W \times C)$ and we have
that $\calD$ is dense in $L^2(\W, L^2(C))$.

{\bf Step 2}: \noindent
We now prove that $\calD$ is contained in the range of $Z$. Let $\Phi\in\calD$, $\Phi(\w;x)=\calF_\G(\Psi(\cdot;x))(\w)$ with $\Psi\in C_c(\G\times C)$. Consider the mapping $y\mapsto(\tau([y])-y; \tau([y]))$ from $G$ to $\G\times C$, where $\tau$ is the cross-section mapping for $G/\G$ and $C$, and  define the function $\psi$ on $G$ as $\psi(y):=\Psi(\tau([y])-y; \tau([y]))$. Since $\Psi\in C_c(\G\times C)$ and $\tau([\cdot]): (G, m_G)\to (C, \mu)$ is measurable,  to see that $\psi$ is $m_G$-measurable it is enough to prove that $\xi(y):= \tau([y])-y$ is a measurable function from $(G, m_G)$ to $(\G, m_\G)$. Observe first that 
by the discussion after \eqref{eq:def-mu} in Subsection \ref{sec-setting}, we can conclude that 
$\tau([\cdot]):G\to C\subseteq G$ is a  measurable function when regarding $C$ with the Haar measure of $G$, $m_G$. Thus, $\xi: G\to \G\subseteq G$ is also  measurable when we consider in $\G$ the measure $m_G$. Note now, that since the topology in $\G$ is the relative topology as a subgroup of $G$, any open set of $\G$, $U$ is the intersection of an open set $V$ of $G$ with $\G$, i.e. $U=V\cap\G$. It follows then, that open sets of $\G$ are $m_G$-measurable and therefore, $\beta(\G)\subseteq \beta(G)$ where $\beta(\G)$ (resp. $\beta(G)$) denotes the Borel $\sigma$-algebra in $\G$ (resp. in $G$). In particular, this implies that $\xi$ is a measurable function from $(G, m_G)$ to $(\G, m_\G)$ as we wanted to prove. 
Moreover, $\psi$ is bounded because so is $\Psi$ and hence, $\psi$ is locally integrable on $G$.
Now, for every compact set $F\subseteq G$, we can apply \eqref{eq:wiel-for-sections} to $\chi_F|\psi|\in L^1(G)$ to obtain,
\begin{align}\label{eq:psi-integrable}
 \int_G \chi_F(y)|\psi(y)|\,dm_G(y)&=\int_C\int_\G \chi_F(x+\g)|\psi(x+\g)|\,dm_\G(\g)\,d\mu(x)\nonumber\\
 &=\int_C\int_\G \chi_F(x+\g)|\Psi(-\g;x)|\,dm_\G(\g)\,d\mu(x)\nonumber\\
 &\leq \int_C\int_\G |\Psi(\g;x)|\,dm_\G(\g)\,d\mu(x)<+\infty.
\end{align}
Since $G$ is $\sigma$-compact, $G=\bigcup_{n\in\N} F_n$ where $F_n\subseteq G$ is compact for every $n\in \N$ and $F_n\subseteq F_{n+1}$. Thus, since $\chi_{F_n}|\psi|\to|\psi|$ $m_G$-a.e. on $G$, by \eqref{eq:psi-integrable} and \cite[Corollary 10.30]{WZ77} we have that $\psi\in L^1(G)$.  With the same argument applied to $|\psi|^2$, we also see that $\psi\in L^2(G)\cap L^1(G)$.

Finally,
\begin{align*}
 Z[\psi](\w)(x)&=\int_\G \psi(x-\g)(-\g,\w) \,dm_\G(\g)\\
 &=\int_\G \Psi(\g;x)(-\g,\w) \,dm_\G(\g)\\
 &=\calF_\G(\Psi(\cdot;x))(\w)=\Phi(\w;x).
\end{align*}
Therefore, $\calD$ is contained in the range of $Z$ as we wanted to prove.

{\bf Step 3}: \noindent
Since the range of $Z$ contains a dense set and it  is closed, $Z$ must be onto.
\end{proof}

\begin{remark}\noindent
\begin{enumerate}
 \item Suppose that  $\G$ is discrete and countable. Then, since the Haar measure $m_G$ on $G$ is invariant under translations, the action  $\sigma:\G\times G\to G$, $\sigma(\g,x)=\g+g$ is a quasi-$\G$-invariant action with $J_\sigma(\g,x)\equiv1$.
 Note that the unitary representation associated to $\sigma$ is $\Pi_\sigma(\g)= T_\g$. Moreover, the section $C$ of $G/\G$ is a tiling set for $\sigma$ and then we are in a setting where the results of Section \ref{sec-Zak} can be applied. Then, if $Z_\sigma$ is as in Definition \ref{def-tau-sigma} and $Z$ as in Definition \ref{def:Zak-non-discrete}, we have $Z_\sigma=Z$.
 \item The construction of the mapping $Z$ was proposed by  Weil in \cite{Wei64}. In that work, the author
 works with the quotients instead of sections and
describes the range of $Z$ in a different way than here. To our
purpose, it is very important to exactly know that the range of $Z$
is a vector valued space since this is what allows us to connect
closed subspaces of $L^2(G)$ that are invariant under translations
with  MI spaces.
\end{enumerate}
\end{remark}

\subsection{Subspaces invariant under translations}\label{subsec:TI-spaces}
As we already said, we want to give a characterization in terms of range functions of closed subspaces of $L^2(G)$ that are invariant under translations in $\G$, where $G$ is a second countable  LCA group and $\G$ is a closed subgroup of $G$. To be precise, we say that a closed subspace $V$ of $L^2(G)$  is {\it invariant under translation in $\G$}, or {\it $\G$-invariant} for short, if $T_\g V\subseteq V$ for all $\g\in\G$.  As usual, we say that $V$ is generated by an at most countable  set $\A\in L^2(G)$ when $V=S^\G(\A):=\clspan\{T_\g\phi:\,\g\in\G, \,\phi\in\A\}$.

If $Z: L^2(G) \to L^2(\W, L^2(C))$ is the isomorphism of Proposition
\ref{prop:zak-non-discrete}, then  $Z$ turns  $\G$-invariant spaces
in $L^2(G)$ into  MI spaces in $L^2(\W, L^2(C))$ with respect to the
determining set $D=\{\tilde{X}_\g\}_{\g\in\G}$ and vice versa. This
fact can be proven in an analogous way to that used to show Lemma
\ref{lemma-connection-MI-spaces}. As a consequence, the following
characterization for $\G$-invariant spaces holds. Its proof is a
straightforward adaptation of that of Theorem \ref{thm-main} and
therefore we leave it for the reader.

\begin{theorem}\label{thm-TI-characterization}
Let $V\subseteq L^2(G)$ be a closed subspace and $Z$ be  the mapping of Proposition \ref{prop:zak-non-discrete}.
Then, the following conditions are equivalent:
\begin{enumerate}
 \item $V$ is a $\G$-invariant  space;
 \item there exists  $J:\W\to \{\textrm{closed subspaces of  }L^2(C)\}$, a measurable  range function, such that
$$V=\big\{f\in L^2(G):\,\, Z[f](\w)\in J(\w)\,,\,\nu-\textrm{a.e.}\,\,\w\in \W\big\}.$$
\end{enumerate}

Identifying  range functions which are equal almost everywhere, the correspondence between
$\G$-invariant spaces and  measurable range functions is one to one and onto.

Moreover, if $V=S^{\G}(\A)$ for some countable subset $\A$ of $L^2(G)$, the measurable  range function $J$
associated to $V$ is given by
$$J(\w)=\clspan\{Z [\phi](\w)\,:\,\phi\in\A\}, \qquad \nu-\textrm{a.e. }\w\in \W.$$
\end{theorem}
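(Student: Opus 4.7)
The plan is to mimic the proof of Theorem~\ref{thm-main}, using the non-discrete Zak transform $Z$ of Proposition~\ref{prop:zak-non-discrete} in the role that $Z_\sigma$ played before. The three ingredients needed are already in place: $Z\colon L^2(G)\to L^2(\W,L^2(C))$ is an isometric isomorphism; it satisfies the covariance relation $Z[T_\g\psi]=\tilde X_\g Z[\psi]$ for every $\g\in\G$; and, as noted at the end of Section~\ref{sec-setting}, $D=\{\tilde X_\g\}_{\g\in\G}\subseteq L^\infty(\W)$ is a determining set for $L^1(\W)$ (via the identification $\pi\colon\W\to\widehat\G$ and uniqueness of the Fourier transform on $\G$).

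First, I would prove the analogue of Lemma~\ref{lemma-connection-MI-spaces}: a closed subspace $V\subseteq L^2(G)$ is $\G$-invariant if and only if $Z[V]\subseteq L^2(\W,L^2(C))$ is a $D$-MI space, and $\A\subseteq L^2(G)$ generates $V$ as a $\G$-invariant space if and only if $Z[\A]$ generates $Z[V]$ as a $D$-MI space. One direction uses that if $f\in V$ then $Z[T_\g f]=\tilde X_\g Z[f]\in Z[V]$; the other direction uses that $Z^{-1}$ is isometric, linear, and intertwines multiplication by $\tilde X_\g$ with $T_\g$, so any $D$-invariance of $Z[V]$ pulls back to $\G$-invariance of $V$. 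Both the equality of generated spaces and the closure are preserved because $Z$ is a Hilbert space isomorphism.

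Next, I would apply Theorem~\ref{thm-MI-rango} to the MI space $Z[V]\subseteq L^2(\W,L^2(C))$. The separability hypothesis is met: $L^2(G)$ is separable (since $G$ is second countable), hence so is $L^2(\W,L^2(C))$ via the isomorphism $Z$, which forces $L^2(\W)$ and $L^2(C)$ to be separable. Theorem~\ref{thm-MI-rango} therefore gives a unique (up to $\nu$-null sets) measurable range function $J\colon\W\to\{\text{closed subspaces of }L^2(C)\}$ with
\[
Z[V]=\{\Phi\in L^2(\W,L^2(C))\colon \Phi(\w)\in J(\w)\text{ for }\nu\text{-a.e. }\w\in\W\}.
\]
Pulling back by $Z$ yields the equivalence $(1)\Leftrightarrow(2)$ and the one-to-one correspondence. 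For the converse direction $(2)\Rightarrow(1)$, one checks directly that the right-hand side is stable under multiplication by each $\tilde X_\g$ (since $J(\w)$ is a linear subspace and $\tilde X_\g(\w)$ is a scalar), so $Z[V]$ is $D$-MI and hence $V$ is $\G$-invariant by the analogue of Lemma~\ref{lemma-connection-MI-spaces}.

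Finally, for the last statement on generators, when $V=S^\G(\A)$ the generator part of the analogue of Lemma~\ref{lemma-connection-MI-spaces} gives $Z[V]=M_D(Z[\A])$, and then the explicit formula in Theorem~\ref{thm-MI-rango} produces $J(\w)=\clspan\{Z[\phi](\w):\phi\in\A\}$ for $\nu$-a.e.\ $\w\in\W$. The only mildly delicate point is the verification that $D$ is a determining set for $L^1(\W)$ in the non-discrete setting, but this is a direct consequence of the change of variables \eqref{eq:Omega-gamma-dual} together with the injectivity of the Fourier transform on $L^1(\widehat\G)$; everything else is a transcription of the argument for Theorem~\ref{thm-main}.
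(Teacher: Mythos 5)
Your proposal is correct and follows exactly the route the paper intends: establish the analogue of Lemma \ref{lemma-connection-MI-spaces} using the isometric isomorphism $Z$, its covariance $Z[T_\g\psi]=\tilde X_\g Z[\psi]$, and the fact that $D=\{\tilde X_\g\}_{\g\in\G}$ is a determining set for $L^1(\W)$, then transfer Theorem \ref{thm-MI-rango} back through $Z$, which is precisely the ``straightforward adaptation of Theorem \ref{thm-main}'' the paper leaves to the reader. Your additional checks (separability of $L^2(\W)$ and $L^2(C)$, and the determining-set property via \eqref{eq:Omega-gamma-dual}) are accurate and fill in the details the paper only alludes to.
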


\begin{remark}
Theorem \ref{thm-TI-characterization} characterizes $\G$-invariant spaces in terms of range function in the same spirit as in \cite[Theorem 3.8]{BR14}. The main difference with the results in \cite{BR14} is that we do not require $\G$ to be co-compact (i.e. $G/\G$ compact). The co-compact assumption on $\G$ guaranties that $\G^*$ is discrete and this is what is crucial to define the mapping $\T$ of \cite[Proposition 3.7]{BR14} (see also \cite[Proposition 3.3]{CP10}).
Working with the isomorphism  $Z$ instead of $\T$ allows as to withdraw the co-compactness on $\G$ and as a consequence, our results are applicable to cases where those in \cite{BR14} are not. To give a simple example, consider $G=\R^3$ and $\G=\Z\times\R\times\{0\}$. Then $\G$ is closed but $G/\G\approx\Tor\times\{0\}\times\R$ is not compact.
Furthermore, when $\G$ is discrete (but not necessarily co-compact) Theorem \ref{thm-TI-characterization} is still more general than \cite[Theorem 3.10]{CP10}. For instance, it covers the (simple) case when $\G=\Z\times\{0\}\subseteq G=\R^2$ which does not fit in the setting of \cite[Theorem 3.10]{CP10} because $G/\G\approx\Tor\times\R$ is not compact.
\end{remark}

\begin{remark}
As a consequence of Theorem \ref{thm-TI-characterization} we recover
Corollary 3.9 in \cite{BR14}. That is, for a second countable LCA
group $G$, a closed subspace $V$ of $L^2(G)$ is invariant under all
translations by $G$ if and only if there exists a measurable set
$E\subset \widehat G$ such that $V= \{f\in L^2(G): \supp \widehat f
\subset E\}.$ Indeed, take $\Gamma = G$ in Theorem
\ref{thm-TI-characterization}; then $L^2(C) = L^2(\{e\})= \C$,
$\Omega = \widehat G,$ each $J(\omega)$ is a closed subspace of $\C$
and hence $J(\omega) = \C$ or $\{0\}$, and $Z[f](\omega)(e) =
\mathcal{F} f (-\omega)\,.$ Thus, it is enough to take $E = \{\omega
\in \widehat G : J(-\omega) = \C\}$.
\end{remark}

As one may expect, it is also possible to characterize frame and Riesz conditions of the system $E^\G(\A):=\{T_\g\phi:\,\g\in\G, \,\phi\in\A\}$ in terms of $Z$. Since
$E^\G(\A)$ is indexed by the - in general - non-discrete set $\G\times\A$, the frame and Riesz properties have to be regarded in the continuous sense as in Definitions 5.2 and 5.3 in \cite{BR14}. Then, we have:

\begin{theorem}\label{thm-continuous-frame-riesz}
 Let $\A\subseteq L^2(G)$ be a countable set and let $J$ the a measurable range function associated to $V=S^{\G}(\A)$ through Theorem \ref{thm-TI-characterization}. Then, the following two conditions are equivalent:
 \begin{itemize}
  \item [$(i)$] $E^{\G}(\A)$ is a continuous frame (resp., continuous Riesz basis)  for $V$ with  bound $0<A\leq B<+\infty$.
  \item [$(ii)$] For $\nu$-almost every $\w\in\W$, the set $\{Z[\phi](\w):\,\phi\in\A\}$ is a frame (resp.,  Riesz basis) for $J(\w)$ with uniform bounds $A, B$.
 \end{itemize}
\end{theorem}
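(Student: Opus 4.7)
My plan is to mirror the strategy used for Theorems \ref{thm-frames} and \ref{thm-riesz}, replacing the discrete Plancherel identity on $\G$ (which gave $\ell^2(\G)\cong L^2(\whG)$) by its continuous counterpart implemented through the isomorphism $\pi:\W\to\whG$ described around equation \eqref{eq:Omega-gamma-dual}. The bridge between the two sides of the statement will again be the identity, valid for $f\in L^2(G)$ and $\phi\in\A$,
\begin{equation*}
\langle f, T_\g\phi\rangle_{L^2(G)}
=\langle Z[f],\tilde X_\g Z[\phi]\rangle
=\int_\W F_\phi(\w)\,\overline{(\g,\w)}\,d\nu(\w),
\qquad F_\phi(\w):=\langle Z[f](\w),Z[\phi](\w)\rangle_{L^2(C)},
\end{equation*}
which is just the Fourier transform on $\G$ (read on $\W$ via $\pi$) of $F_\phi\in L^1(\W)$. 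The normalization fixed at the beginning of Subsection \ref{sec-non-discrete-Zak} ensures that Plancherel holds between $L^2(\G)$ and $L^2(\W)$.

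For the frame direction $(ii)\Rightarrow(i)$, I would first note that, by $(ii)$ together with the isometry property of $Z$, the inequality
$A\|Z[f](\w)\|_{L^2(C)}^2\leq \sum_{\phi\in\A}|F_\phi(\w)|^2\leq B\|Z[f](\w)\|_{L^2(C)}^2$
holds for $\nu$-a.e.\ $\w$; integrating over $\W$ shows that each $F_\phi$ belongs to $L^2(\W)$, so Plancherel yields
\begin{equation*}
\int_\G |\langle f,T_\g\phi\rangle|^2\,dm_\G(\g)=\int_\W|F_\phi(\w)|^2\,d\nu(\w).
\end{equation*}
Summing over $\phi\in\A$ and integrating the inequalities over $\W$ gives the continuous frame bounds. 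The converse $(i)\Rightarrow(ii)$ is obtained by reversing this chain: the continuous frame hypothesis forces $F_\phi\in L^2(\W)$ (by combining $L^1$ membership with the Fourier side being in $L^2(\G)$ via Pontrjagin duality, exactly as in the proof of Theorem \ref{thm-frames}), after which standard range-function localization arguments (as in \cite{Bow00,CP10}) produce the fiberwise bounds $\nu$-a.e.\ on $\W$.

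For the Riesz direction, the correct test objects are not finitely supported sequences but compactly supported functions $b\in C_c(\G\times\A)$ (finite sums over $\phi\in\A$ of compactly supported functions of $\g$), with the analog of the trigonometric polynomial $P_\phi$ replaced by $\widehat{b(\cdot,\phi)}\in L^2(\W)$. Proceeding as in Theorem \ref{thm-riesz} one computes
\begin{equation*}
\Bigl\|\int_\G\sum_{\phi\in\A}b(\g,\phi)T_\g\phi\,dm_\G(\g)\Bigr\|_{L^2(G)}^2
=\int_\W\Bigl\|\sum_{\phi\in\A}\widehat{b(\cdot,\phi)}(\w)\,Z[\phi](\w)\Bigr\|_{L^2(C)}^2 d\nu(\w),
\end{equation*}
and Plancherel again identifies $\sum_\phi\|\widehat{b(\cdot,\phi)}\|_{L^2(\W)}^2$ with $\sum_\phi\|b(\cdot,\phi)\|_{L^2(\G)}^2$. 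The implication $(ii)\Rightarrow(i)$ then follows by integrating the fiberwise Riesz inequality, while $(i)\Rightarrow(ii)$ is proved by contradiction: if on a set $W\subseteq\W$ of positive $\nu$-measure the fiber inequality fails for some finite sequence $a=\{a_\phi\}_{\phi\in\A}$, one approximates each $a_\phi\chi_W\in L^\infty(\W)$ in $L^2$ by functions of the form $\widehat{b(\cdot,\phi)}$ with $b(\cdot,\phi)\in C_c(\G)$ and uniformly bounded, and transports the failure to the global Riesz inequality.

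The main obstacle I anticipate is precisely this last approximation step. In the discrete-$\G$ proof of Theorem \ref{thm-riesz} one invokes \cite[Lemma 4.4]{CP10}, which produces uniformly bounded trigonometric polynomials on the compact group $\whG$ converging pointwise a.e.\ to a prescribed $L^\infty$ function. Here $\whG$ need not be compact, $\W$ is only $\sigma$-compact, and the analog of a trigonometric polynomial is $\calF_\G(b)$ for $b\in C_c(\G)$. One has to produce a sequence $b_n\in C_c(\G)$ with $\calF_\G(b_n)\to m=a\chi_W$ pointwise $\nu$-a.e.\ on $\W$ and $\|\calF_\G(b_n)\|_{L^\infty(\W)}\leq C$; this can be arranged by truncating and regularizing $\calF_\G^{-1}(m)\in L^2(\G)$ (multiplying by a compactly supported bump and convolving against an approximate identity on $\G$), since $m\in L^2\cap L^\infty(\W)$ forces $\calF_\G^{-1}(m)\in L^2(\G)$, and the uniform bound on the $\calF_\G(b_n)$ is preserved by the regularization. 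Once this continuous analog of \cite[Lemma 4.4]{CP10} is in hand, the rest of the argument copies Theorem \ref{thm-riesz} line for line.
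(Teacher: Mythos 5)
Your frame half is fine and is, in substance, what the paper intends: the paper omits the proof, declaring it a straightforward adaptation of \cite[Theorem 5.1]{BR14}, and your chain ($F_\phi\in L^1(\W)$, its Fourier transform on $\G$ square-integrable by the continuous frame hypothesis, hence $F_\phi\in L^2(\W)$, then Plancherel plus the usual fiberwise localization) is exactly that adaptation. Where you diverge is the Riesz half, and there your argument has a genuine gap at the very step you flag. Your proposed continuous analogue of \cite[Lemma 4.4]{CP10} — truncate $\calF_\G^{-1}(m)$ by a compactly supported bump, mollify, and claim the uniform bound $\|\calF_\G(b_n)\|_{L^\infty}\leq C$ is ``preserved'' — is not justified. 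Multiplying $\calF_\G^{-1}(m)$ by a cutoff amounts, on the frequency side, to convolving $m$ with the Fourier transform of the cutoff, and for a sharp (or generic) cutoff these kernels are not uniformly bounded in $L^1$ (the Dirichlet-kernel phenomenon); concretely, $\|\calF_\G(\chi_{K_n}\calF_\G^{-1}(m))\|_{L^\infty}$ is only controlled by $\|\chi_{K_n}\calF_\G^{-1}(m)\|_{L^1(\G)}$, which diverges as $K_n\nearrow\G$ whenever $\calF_\G^{-1}(m)\notin L^1(\G)$. The mollification is harmless (it multiplies the transform by a function bounded by $1$); it is the truncation that destroys the uniform bound. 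The lemma can be repaired by using Fej\'er-type bumps, e.g. $u_n=m_\G(K_n)^{-1}\chi_{K_n}\ast\chi_{-K_n}$, whose transforms are nonnegative with integral $1$, so that $\calF_\G\bigl(u_n\,\calF_\G^{-1}(m)\bigr)=m\ast\widehat{u_n}$ obeys $\|m\ast\widehat{u_n}\|_{L^\infty}\leq\|m\|_{L^\infty}$ and converges to $m$ a.e.\ along a subsequence — but this choice is essential and must be made explicit; as written, your construction fails.

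You also miss the shortcut the paper actually uses for the Riesz case, which makes the whole continuous approximation machinery unnecessary: as remarked right after the theorem (following \cite{BR14}), for Riesz bases \emph{either} condition forces $\G$ to be discrete. For instance, $(ii)$ applied to the sequence supported on a single $\phi\in\A$ gives $\|Z[\phi](\w)\|^2_{L^2(C)}\geq A$ for $\nu$-a.e.\ $\w$, whence $\nu(\W)\leq\|Z[\phi]\|^2/A<+\infty$, i.e.\ $\whG$ is compact and $\G$ is discrete; a parallel argument (as in \cite{BR14}) shows $(i)$ does too. Once $\G$ is discrete, the statement is exactly Theorem \ref{thm-riesz} for the translation action $\sigma(\g,x)=\g+x$, so no continuous analogue of \cite[Lemma 4.4]{CP10} is needed. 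Your direct route can be made to work with the Fej\'er correction above, but as it stands the key approximation step is unproved and the paper's reduction is both simpler and the intended proof.
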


The proof of this result is, once again, a straightforward adaptation of the proof of \cite[Theorem 5.1]{BR14} and therefore we omit it. However, we do want to emphasize what was noticed in \cite{BR14}: first, despite item  $(i)$ in Theorem \ref{thm-continuous-frame-riesz} involves the concept of continuous frames (resp., Riesz bases), the frame and Riesz basis notions in $(ii)$ are the usual ones. Second,  with the same strategies than those used in the proof of Theorem 5.1 for Riesz bases in  \cite{BR14}, it can be seen that  both item  $(i)$ and item $(ii)$ in Theorem \ref{thm-continuous-frame-riesz} for Riesz bases imply that $\G$ is discrete. Then,  the result reduces to Theorem \ref{thm-riesz} where the action involved is
$\sigma:\G\times G\to G$, $\sigma(\g,x)=\g+x$.

\subsection{Connection with the so-called Fiberization mapping}
As we already pointed out, when $\G$ is assumed to be co-compact, $\G$-invariant spaces are characterized in terms of range functions either by Theorem \ref{thm-TI-characterization} or
\cite[Theorem 3.8]{BR14}. What we shall investigate now, is the relationship of the main tools used in those theorems, mainly $Z$, the  non-discrete generalized Zak transform, and $\T$ the fiberization mapping of \cite[Proposition 3.7]{BR14}.

Let  $G$, $\G$, $\wh G$, $\G^*$, $\W$ and $C$  be as in  Section \ref{sec-setting} with their respective measures and suppose, additionally,  that $G/\G$ is compact. Then, $\G^*$ is discrete and countable (cf. \cite[Theorem 24.15]{HR79}). By Remark \ref{rem:measures}, we have that $\nu$, the measure on $\W$ given by \eqref{eq:def-mu}, is the restriction of $m_{\wh G}$ to $\W$.  As we identified $\W$ with $\whG$, we can identify $C$ with the dual group of $\G^*$.

The isometry $\T$ proposed in \cite{BR14, CP10} is the mapping $\T:L^2(G)\to L^2(\W, \ell^2(\G^*))$ defined by
$$\T f(\w)=\{\widehat{f}(\w+\delta)\}_{\delta\in \G^*}.$$

\begin{proposition}\label{tau-vs-tau}
 For $f\in \mathcal{C}_c(G)$ such that $\widehat{f}\in L^1(\widehat{G})$ it holds that
 $$\calF_{\G^*}(\T f(\w))(x)=(x,\w)Z_{\sigma}[f](-w)(-x),$$
 for $x\in C$ and $\w\in\W$, where $\calF_{\G^*}$ denotes the Fourier transform with respect to the group $\G^*$.
\end{proposition}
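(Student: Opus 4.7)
My plan is to compute the left-hand side directly from the definitions of $\T$ and of $\calF_{\G^*}$, and then to use Weil's formula (Theorem \ref{thm-weil-for-sections}) together with Fourier inversion on the compact quotient $G/\G$ to convert the outcome into the right-hand side. Since $G/\G$ is compact, $\G^*$ is discrete and its dual is identified with $G/\G$, so any $x\in C$ gives a character on $\G^*$ via $\delta\mapsto(x,\delta)$. Under this identification,
$$\calF_{\G^*}(\T f(\w))(x)=\sum_{\delta\in\G^*}\widehat{f}(\w+\delta)\,\overline{(x,\delta)}.$$

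Next, I would unfold each coefficient $\widehat{f}(\w+\delta)$ by applying Theorem \ref{thm-weil-for-sections} to split the integration on $G$ as $C\times\G$, writing $y=x'+\g$ and exploiting $(\g,\delta)=1$ for $\delta\in\G^*$:
$$\widehat{f}(\w+\delta)=\int_C\overline{(x',\delta)}\,h_\w(x')\,d\mu(x'),\qquad h_\w(x'):=\int_\G f(x'+\g)\,\overline{(x'+\g,\w)}\,dm_\G(\g).$$
A direct change of variables shows that $h_\w$ is $\G$-periodic, so it descends to a function on $G/\G\cong C$, and the previous display exhibits $\widehat{f}(\w+\delta)$ as the $\delta$-th Fourier coefficient of $h_\w$ on the compact group $G/\G$, whose dual group is precisely $\G^*$.

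The core step is then to recognize the sum defining the left-hand side as the pointwise inverse Fourier series of $h_\w$ evaluated at $-x$, the sign flip coming from the conjugate $\overline{(x,\delta)}$; this gives $\calF_{\G^*}(\T f(\w))(x)=h_\w(-x)$. Factoring $\overline{(-x,\w)}=(x,\w)$ out of $h_\w(-x)$ and changing variables $\g\mapsto-\g$ (using invariance of $m_\G$) converts the remaining integral into $Z_\sigma[f](-\w)(-x)$ in the sense of Definition \ref{def:Zak-non-discrete} (which coincides with Definition \ref{def-tau-sigma} when $\G$ is discrete), yielding the claimed identity.

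The main obstacle I anticipate is justifying the interchange of the summation over $\G^*$ with the integration over $C$, together with the pointwise validity of Fourier inversion on $G/\G$. Both are controlled by the hypotheses $f\in C_c(G)$ and $\widehat f\in L^1(\wh G)$: periodizing $|\widehat f|$ via Weil's formula on $\wh G$ relative to the discrete subgroup $\G^*$ shows that $\{\widehat f(\w+\delta)\}_{\delta\in\G^*}\in\ell^1(\G^*)$ for $\nu$-a.e. $\w\in\W$, which is exactly what is needed for absolute convergence and for the pointwise inversion on the compact quotient. Continuity of $h_\w$, ensured by $f\in C_c(G)$, upgrades the equality from the $L^2$-sense to the pointwise sense required by the statement.
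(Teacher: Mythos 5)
Your proposal is correct, but it takes a different route from the paper's proof, which is shorter: there the authors start from the right-hand side, rewriting $Z[f](-\w)(-x)=(x,-\w)\int_\G T_xM_{-\w}f(\g)\,dm_\G(\g)$ and then invoking the Poisson summation formula (Reiter, Theorem 5.5.2) for the modulated translate $T_xM_{-\w}f$, which turns the $\G$-integral directly into $\sum_{\delta\in\G^*}\widehat f(\w+\delta)(-x,\delta)$, i.e.\ the left-hand side. You instead start from the left-hand side and effectively re-derive that Poisson formula: Weil's formula (Theorem \ref{thm-weil-for-sections}) identifies $\widehat f(\w+\delta)$ with the $\delta$-th Fourier coefficient on $G/\G\cong C$ of the $\G$-periodic, continuous periodization $h_\w$, and pointwise Fourier inversion on the compact quotient (dual group $\G^*$) gives $\calF_{\G^*}(\T f(\w))(x)=h_\w(-x)=(x,\w)Z[f](-\w)(-x)$; all the individual computations (the $\G$-periodicity of $h_\w$, the factorization of the character, the change of variables $\g\mapsto-\g$) check out. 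What your route buys is self-containedness — it uses only Theorem \ref{thm-weil-for-sections} and classical Fourier series on a compact LCA group, and it makes visible exactly where $f\in C_c(G)$ and $\widehat f\in L^1(\wh G)$ enter (continuity of $h_\w$, and summability of $\{\widehat f(\w+\delta)\}_\delta$) — at the cost of being longer than the one-line appeal to Reiter. Two shared caveats are worth keeping in mind: your argument establishes the identity only for those $\w$ with $\{\widehat f(\w+\delta)\}_{\delta}\in\ell^1(\G^*)$, hence for $\nu$-a.e.\ $\w$, which is formally weaker than the unqualified statement, but this summability is precisely the hypothesis under which the cited Poisson formula applies as well, and the a.e.\ version suffices for the subsequent corollary; and the pointwise inversion on $G/\G$ against counting measure on $\G^*$ presupposes the compatible normalization of the quotient Haar measure (dual to counting measure on $\G^*$), the same normalization implicitly used when quoting Poisson. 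Neither point is a defect specific to your proof.
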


\begin{proof}
 Fix $x\in C$ and $\w\in\W$. Denoting by $M_{\w}$ the modulation operator by $\w$, i.e $M_{\w}\phi(\cdot)=(\cdot, \w)\phi(\cdot)$, we can rewrite $Z_{\sigma}[f](-w)(-x)$ as
 \begin{align*}
  Z_{\sigma}[f](-\w)(-x)&=\int_{\G}f(-x+\g)(\g,-\w)\, dm_\G(\g)\\
  &=(x,-\w)\int_{\G}T_{x}M_{-\w}f(\g)\, dm_\G(\g).
  \end{align*}
Since $f\in \mathcal{C}_c(G)$ and  $\widehat{f}\in L^1(\widehat{G})$, the same holds for $T_{x}M_{-\w}f$. This is, $T_{x}M_{-\w}f\in \mathcal{C}_c(G)$ and  $\widehat{T_{x}M_{-\w}f}\in L^1(\widehat{G})$. Therefore, by the Poisson Formula \cite[Theorem 5.5.2]{Rei68},
\begin{equation}\label{eq:poisson}
\int_{\G}T_{x}M_{-\w}f(\g)\, dm_\G(\g)=\sum_{\delta\in \G^*}\widehat{T_{x}M_{-\w}f}(\delta)
=\sum_{\delta\in \G^*}\widehat{f}(\w+\delta)(-x,\delta).
\end{equation}
Since the right hand side of \eqref{eq:poisson} is the Fourier
transform of $\T f(\w)$ at $x$ with respect to the
group $\Gamma^*$, the result follows.
\end{proof}

As a consequence of  Plancherel's Theorem, we immediately obtain:
\begin{corollary}
 If $f$ and $g$ are as in Proposition \ref{tau-vs-tau} then,
 $$\langle \T f(\w), \T g(\w)\rangle_{\ell^2(\G^*)}=\langle Z_{\sigma} [f](-\w), Z_{\sigma}[g](-\w)\rangle_{L^2(C)}.$$
\end{corollary}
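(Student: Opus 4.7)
The plan is to invoke Plancherel's theorem on the discrete abelian group $\G^*$ and then substitute the pointwise identity of Proposition \ref{tau-vs-tau}. Under the standing hypothesis of this subsection (namely $G/\G$ compact), $\G^*$ is discrete and countable, so $\wh{\G^*}$ is compact and Pontryagin duality identifies it with $C$. With the chosen normalizations of Haar measures, $\mathcal{F}_{\G^*}\colon \ell^2(\G^*)\to L^2(C)$ is thus a unitary isomorphism.

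First, I would apply Plancherel to convert the inner product on $\ell^2(\G^*)$ into an integral on $C$:
\begin{equation*}
\langle \T f(\w),\T g(\w)\rangle_{\ell^2(\G^*)}=\int_C \mathcal{F}_{\G^*}(\T f(\w))(x)\,\overline{\mathcal{F}_{\G^*}(\T g(\w))(x)}\,d\mu(x).
\end{equation*}
Then I would substitute the formula of Proposition \ref{tau-vs-tau} into both factors. The characters $(x,\w)$ and $\overline{(x,\w)}$ cancel pointwise since $|(x,\w)|=1$, reducing the right-hand side to
\begin{equation*}
\int_C Z_\sigma[f](-\w)(-x)\,\overline{Z_\sigma[g](-\w)(-x)}\,d\mu(x).
\end{equation*}

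The final step is the change of variables $y=-x$. The subtlety is that in general $-C\neq C$ inside $G$, so one cannot immediately read off the integral as one over $C$ in the $y$-variable. However, a direct computation from the definition of $Z_\sigma$ shows that $Z_\sigma[f](\al)(x+\g)=\overline{(\g,\al)}\,Z_\sigma[f](\al)(x)$ for every $\g\in\G$, and therefore the product $Z_\sigma[f](-\w)(y)\,\overline{Z_\sigma[g](-\w)(y)}$ is $\G$-invariant in $y\in G$. Combined with the inversion invariance of $m_{G/\G}$ (and hence of $\mu$) on the quotient, the integral over $-C$ coincides with that over $C$, yielding $\langle Z_\sigma[f](-\w),Z_\sigma[g](-\w)\rangle_{L^2(C)}$.

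I expect the main (mild) obstacle to be precisely this last step, since it combines the $\G$-quasi-periodicity of the Zak transform with the inversion invariance on $G/\G$, and requires a careful bookkeeping of the chosen section $C$ and its pushforward measure. Everything else is an immediate consequence of Plancherel's theorem and Proposition \ref{tau-vs-tau}.
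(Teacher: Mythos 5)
Your argument is correct and takes essentially the same route as the paper, which obtains the corollary directly from Plancherel's theorem for $\calF_{\G^*}$ combined with the pointwise identity of Proposition \ref{tau-vs-tau}. Your extra care in passing from values at $-x$ to values at $x$ — using the $\G$-periodicity of $Z_{\sigma}[f](-\w)(\cdot)\overline{Z_{\sigma}[g](-\w)(\cdot)}$ together with the inversion invariance of $m_{G/\G}$ and the pushforward definition of $\mu$ — correctly fills in a detail the paper leaves implicit, rather than constituting a different approach.
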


\appendix
\section{}\label{sec:appendix}
In this section we shall prove two results concerning vector valued
spaces. The first one says that, when $\W$ has finite measure and
$\calH$ is any separable Hilbert space, the set of simple functions
of $L^2(\Omega, \mathcal{H})$ is dense. The second one shows that,
when $\W$ is $\sigma$-finite and the Hilbert space $\calH$ is an
$L^2$-space, this is $\calH=L^2(C)$ for some measure space $(C,
\mu)$, then $L^2(\W, L^2(C))=L^2(\W\times C)$.  We
believe that both results are known, but since we could not find an
explicit reference for them, we decided to provide a proof. We will
make use of the following intermediate result. Its proof can be
found in \cite[Proof of Theorem 2.1, Chapter II]{DU77}.

\begin{lemma}\label{lemma-quasi-simple}
Let $(\W, \nu)$ be a measure space such that $\nu(\W)<+\infty$ and let
$\calH$ be a separable Hilbert space. If $\Phi:\Omega\to \calH$
is a measurable function, then, for
every $\varepsilon>0$ there exist a partition of  $\Omega$ into
measurable sets, $\{B_n\}_{n\in \N}$ and a sequence
$\{x_n\}_{n\in\N}\subseteq \calH$ such that
$\Psi:=\sum_{n=1}^{\infty}x_n\chi_{B_n}$ satisfy
$\|\Phi(\w)-\Psi(\w)\|_\calH<\varepsilon$ for a.e. $\w\in \W$.
\end{lemma}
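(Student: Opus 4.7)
The plan is to approximate $\Phi$ pointwise by a countably-valued simple function using the separability of $\calH$, rather than trying to construct a true simple function (finitely many values), which would in general fail even on a finite measure space.

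First, since $\calH$ is separable, I would fix a countable dense subset $\{x_n\}_{n\in\N}\subseteq \calH$. For the given $\varepsilon>0$, the open balls $U_n=\{y\in\calH:\|y-x_n\|_\calH<\varepsilon\}$ form an open cover of $\calH$. Next, set $A_n:=\Phi^{-1}(U_n)$. Because $\Phi$ is a measurable $\calH$-valued function and $\calH$ is separable, strong/Borel measurability coincide (Pettis's theorem), so each $A_n$ is a measurable subset of $\W$, and $\bigcup_{n\in\N} A_n=\W$.

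Then I would disjointify the cover by setting
\[
B_1:=A_1, \qquad B_n:=A_n\setminus\bigcup_{k=1}^{n-1}A_k \quad (n\geq 2).
\]
The collection $\{B_n\}_{n\in\N}$ is a measurable partition of $\W$. Define
\[
\Psi(\w):=\sum_{n=1}^{\infty}x_n\chi_{B_n}(\w),
\]
which is well defined for every $\w\in\W$ since the $B_n$ are disjoint and cover $\W$. For any $\w\in B_n$ we have $\Phi(\w)\in U_n$ by construction, and hence $\|\Phi(\w)-\Psi(\w)\|_\calH=\|\Phi(\w)-x_n\|_\calH<\varepsilon$; thus the inequality in fact holds for every $\w\in\W$, which is stronger than what the lemma requires.

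The only delicate point is justifying the measurability of the sets $A_n=\Phi^{-1}(U_n)$, which rests on the precise notion of measurability being used for $\calH$-valued functions. In the Bochner/strong sense adopted in \cite{DU77}, $\Phi$ is the a.e.\ limit of a sequence of (truly) simple functions, and combined with separability of $\calH$ this implies Borel measurability, ensuring preimages of open sets are in the $\sigma$-algebra of $\W$. No use of $\nu(\W)<+\infty$ is needed for the approximation itself; finiteness of the measure becomes relevant only when one subsequently wants to truncate the countable partition to a finite one and thereby deduce density of genuinely simple functions in $L^2(\W,\calH)$.
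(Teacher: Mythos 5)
Your proof is correct and is essentially the argument the paper relies on: the lemma is quoted from the proof of Theorem 2.1 in Chapter II of \cite{DU77}, which uses exactly this covering of $\calH$ by $\varepsilon$-balls centered at a countable dense set, pulling back along $\Phi$ and disjointifying to obtain the countably-valued approximation. Your remarks that the estimate actually holds everywhere (modulo the usual Pettis-type identification of measurability notions) and that $\nu(\W)<+\infty$ is only needed later, for truncating to genuinely simple functions in Proposition \ref{prop:simples-are-dense}, are both accurate.
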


\begin{proposition}\label{prop:simples-are-dense}
Suppose that $\nu(\W)<+\infty$. Then for each $\Phi\in L^2(\Omega, \mathcal{H})$ there exists a sequence $\{\Phi_n\}_{n\in\N}\subseteq L^2(\Omega, \mathcal{H})$ of simple functions such that $\lim_{n\to\infty}\|\Phi-\Phi_n\|=0$.
\end{proposition}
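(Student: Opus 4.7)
The plan is to use Lemma \ref{lemma-quasi-simple} to first approximate $\Phi$ by a countable-sum simple function in the pointwise (hence $L^2$) sense, and then truncate that countable sum to obtain a genuine simple function.

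First, given $\Phi\in L^2(\Omega,\mathcal{H})$ and $\varepsilon>0$, I would set $\delta:=\varepsilon/(2\sqrt{\nu(\Omega)+1})$ and apply Lemma \ref{lemma-quasi-simple} to produce a measurable partition $\{B_n\}_{n\in\N}$ of $\Omega$ and vectors $\{x_n\}_{n\in\N}\subseteq\mathcal{H}$ such that the countable-sum function $\Psi:=\sum_{n=1}^{\infty}x_n\chi_{B_n}$ satisfies $\|\Phi(\w)-\Psi(\w)\|_{\mathcal{H}}<\delta$ for $\nu$-a.e.\ $\w\in\Omega$. Integrating this pointwise bound over $\Omega$ (which is permissible because $\nu(\Omega)<+\infty$) yields
\[
\|\Phi-\Psi\|^2=\int_{\Omega}\|\Phi(\w)-\Psi(\w)\|_{\mathcal{H}}^2\,d\nu(\w)\leq \delta^2\,\nu(\Omega)<\frac{\varepsilon^2}{4}.
\]
In particular $\Psi\in L^2(\Omega,\mathcal{H})$, and by the disjointness of the $B_n$ and orthogonality of the associated terms in $L^2(\Omega,\mathcal{H})$ one has
\[
\|\Psi\|^2=\sum_{n=1}^{\infty}\|x_n\|_{\mathcal{H}}^2\,\nu(B_n)<+\infty.
\]

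Second, I would truncate. Define the genuine simple function $\Psi_N:=\sum_{n=1}^{N}x_n\chi_{B_n}\in L^2(\Omega,\mathcal{H})$. Since $\sum_{n=1}^{\infty}\|x_n\|_{\mathcal{H}}^2\,\nu(B_n)$ converges, the tail
\[
\|\Psi-\Psi_N\|^2=\sum_{n=N+1}^{\infty}\|x_n\|_{\mathcal{H}}^2\,\nu(B_n)
\]
can be made smaller than $\varepsilon^2/4$ by choosing $N$ large enough. Combining the two estimates via the triangle inequality gives $\|\Phi-\Psi_N\|<\varepsilon$, producing the desired approximating sequence of simple functions as $\varepsilon\to 0$.

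The only real subtlety is the passage from the pointwise approximation provided by Lemma \ref{lemma-quasi-simple} to an $L^2$ estimate: this is precisely where the hypothesis $\nu(\Omega)<+\infty$ is used, and it is what prevents the argument from extending verbatim to the $\sigma$-finite case. Once that integration step is in place, the truncation is routine and uses only that $\Psi\in L^2(\Omega,\mathcal{H})$, which itself follows from $\|\Phi-\Psi\|<+\infty$ and $\Phi\in L^2(\Omega,\mathcal{H})$. Thus the main (and essentially only) technical point is to recognize that the uniform pointwise bound $\delta$ is integrable over the finite-measure space $\Omega$.
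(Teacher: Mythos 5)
Your proof is correct and follows essentially the same route as the paper: apply Lemma \ref{lemma-quasi-simple} to obtain a countably-valued approximant with a uniform pointwise bound, integrate over the finite-measure space to pass to an $L^2$ estimate, and then truncate the convergent series $\sum_n\|x_n\|_{\mathcal H}^2\,\nu(B_n)$ to obtain a genuine simple function, finishing with the triangle inequality. The paper merely phrases the same argument with tolerances $1/n$ instead of an $\varepsilon$-$\delta$ formulation.
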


\begin{proof}
Fix $\Phi\in L^2(\Omega, \mathcal{H})$. For each $n\in\N$, by Lemma \ref{lemma-quasi-simple} we can choose $\Psi_n=\sum_{j=1}^{\infty}x_j^n\chi_{B_j^n}$ with   $\{B_j^n\}_{j\in \N}$ being a partition of $\W$ in measurable sets and $\{x_j^n\}_{j\in\N}\subseteq \calH$ such that
$\|\Phi(\w)-\Psi_n(\w)\|_\calH<\frac1{n}$ for a.e. $\w\in \W$.
Since $\Phi\in L^2(\Omega, \mathcal{H})$, we have that $\Psi_n\in L^2(\Omega, \mathcal{H})$ for all $n\in\N$. Indeed,
for a.e. $\w\in\W$, $\|\Psi_n(\w)\|_\calH\leq \frac1{n}+\|\Phi(\w)\|_\calH\leq 2\max\{\frac1{n}, \|\Phi(\w)\|_\calH\} $ and then
\begin{equation}\label{eq:norm-2}
 \|\Psi_n(\w)\|_\calH^2\leq 4\max\{\frac1{n^2}, \|\Phi(\w)\|^2_\calH\}\leq 4(\frac1{n^2}+ \|\Phi(\w)\|^2_\calH).
\end{equation}
Now, integrating over $\W$ in \eqref{eq:norm-2} we get
$\|\Psi_n\|^2=\int_{\W}\|\Psi_n(\w)\|_\calH^2\,d\nu(\w)\leq 4(\frac1{n^2}\nu(\W)+ \|\Phi\|^2)<+\infty$.

Note that, for every $n\in\N$, $\|\Psi_n\|^2=\sum_{j=1}^{\infty}\|x_j^n\|^2_\calH\nu(B_j^n)$ and then we can choose $N(n)$ large enough so that
\begin{equation}\label{eq:tail}
\sum_{j=N(n)+1}^{\infty}\|x_j^n\|^2_\calH\nu(B_j^n)=\int_{\bigcup_{j=N(n)+1}^{\infty}B_j^n}\|\Psi_n(\w)\|_\calH^2\,d\nu(\w)<\frac1{n}.
\end{equation}

Define $\Phi_n=\sum_{j=1}^{N(n)}x_j^n\chi_{B_j^n}$ and let us see that $\Phi_n\to\Phi$, as $n\to+\infty$ in $L^2(\Omega, \mathcal{H})$.
Proceeding as before and using \eqref{eq:tail}, we have
\begin{align*}
  \|\Phi_n-\Phi_n\|^2&=\int_{\W}\|\Phi_n(\w)-\Phi(\w)\|_\calH^2\,d\nu(\w)\\
  &\leq 4\left(\int_{\W}\|\Phi_n(\w)-\Psi_n(\w)\|_\calH^2\,d\nu(\w)+\int_{\W}\|\Psi_n(\w)-\Phi(\w)\|_\calH^2\,d\nu(\w)\right)\\
  &\leq 4\left(\int_{\bigcup_{j=N(n)+1}^{\infty}B_j^n}\|\Psi_n(\w)\|_\calH^2\,d\nu(\w)+\int_{\W}\frac1{n^2}\,d\nu(\w)\right)\\
  &\leq 4(\frac1{n}+\frac{\nu(\W)}{n^2}),
\end{align*}
and the result follows.
\end{proof}

As it was pointed out to us by the referee, the next proposition is a generalization of \cite[Theorem II.10 (c)]{RS80}. 

\begin{proposition}\label{prop:isom-L-2}
 Let $(\W, \nu)$ be a $\sigma$-finite measure space and $(C, \mu)$ be a measure space such that $L^2(C)$ is separable.
 Then, $L^2(\W, L^2(C))=L^2(\W\times C)$.
\end{proposition}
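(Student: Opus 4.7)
The plan is to define the canonical identification map $T:L^2(\W,L^2(C))\to L^2(\W\times C)$ by $(T\Phi)(\w,x)=\Phi(\w)(x)$, and to show it is a well-defined isometric isomorphism. The natural route goes through simple functions first, where everything is transparent, and then by density to the general case.

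First, I would reduce to the finite-measure case by writing $\W=\bigcup_{n\in\N}\W_n$ with $\nu(\W_n)<\infty$ and $\W_n\subseteq\W_{n+1}$, and checking at the end that the identifications on the pieces are consistent. On a set of finite measure, Proposition \ref{prop:simples-are-dense} gives density of simple functions $\Phi=\sum_{j=1}^k f_j\chi_{E_j}$ with $f_j\in L^2(C)$ and $E_j\subseteq\W$ measurable. For such $\Phi$, the function $(T\Phi)(\w,x)=\sum_{j=1}^k f_j(x)\chi_{E_j}(\w)$ is manifestly $(\nu\otimes\mu)$-measurable, and Tonelli gives
\begin{equation*}
\|T\Phi\|_{L^2(\W\times C)}^2=\int_\W\int_C|T\Phi(\w,x)|^2\,d\mu(x)\,d\nu(\w)=\int_\W\|\Phi(\w)\|^2_{L^2(C)}\,d\nu(\w)=\|\Phi\|^2.
\end{equation*}
Hence $T$ is an isometry on simple functions; since such functions are dense in $L^2(\W,L^2(C))$ (piecing together the finite measure exhaustion), $T$ extends uniquely to an isometry on all of $L^2(\W,L^2(C))$, and the extension is still given pointwise by $\Phi(\w)(x)$ for a representative chosen via an a.e.-convergent subsequence.

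For surjectivity, given $F\in L^2(\W\times C)$, Fubini--Tonelli gives $F(\w,\cdot)\in L^2(C)$ for $\nu$-a.e. $\w$, with $\int_\W\|F(\w,\cdot)\|^2_{L^2(C)}d\nu(\w)=\|F\|^2<\infty$. The question is whether $\w\mapsto F(\w,\cdot)$ is measurable as a vector-valued map into $L^2(C)$. To see this, approximate $F$ in $L^2(\W\times C)$ by scalar simple functions $F_n=\sum_j c_j^n\chi_{A_j^n\times B_j^n}$ (using $\sigma$-finiteness of $\nu\otimes\mu$); each $F_n$ viewed as a vector-valued map $\w\mapsto\sum_jc_j^n\chi_{A_j^n}(\w)\chi_{B_j^n}$ is a simple function into $L^2(C)$ and hence measurable. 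Passing to a subsequence with $F_{n_k}(\w,\cdot)\to F(\w,\cdot)$ in $L^2(C)$ for a.e. $\w$ (which uses separability of $L^2(C)$ to make the pointwise-a.e. argument coherent via Pettis's theorem), we get the vector-valued measurability. Thus $F(\w,\cdot)$ defines an element of $L^2(\W,L^2(C))$ whose image under $T$ is $F$.

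The main obstacle is precisely this measurability issue: a scalar jointly measurable function on $\W\times C$ need not, in general, define a strongly measurable vector-valued function into the section space. What saves us is the separability of $L^2(C)$, which by Pettis's measurability theorem reduces strong measurability to weak measurability; and weak measurability $\w\mapsto\langle F(\w,\cdot),g\rangle_{L^2(C)}$ for fixed $g\in L^2(C)$ follows from Fubini applied to $F(\w,x)\overline{g(x)}$. Everything else is bookkeeping between the finite-measure density argument in Proposition \ref{prop:simples-are-dense} and the $\sigma$-finite exhaustion.
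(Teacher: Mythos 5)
Your argument is correct and follows essentially the same route as the paper: exhaust $\W$ by sets of finite measure, use Proposition \ref{prop:simples-are-dense} to get density of vector-valued simple functions (which are jointly measurable and on which the two norms agree), and conclude by the isometry/closedness of $L^2(\W_n\times C)$ inside $L^2(\W_n, L^2(C))$. The only real difference is that the direction you establish as surjectivity of $T$ (strong measurability of $\w\mapsto F(\w,\cdot)$, via Pettis and the separability of $L^2(C)$) is the inclusion the paper dispatches with a one-line appeal to Fubini, so your write-up is, if anything, more careful on that point.
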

\begin{proof}
 Clearly, by Fubini's Theorem, $L^2(\W \times C)\subseteq L^2(\W, L^2(C))$. The  claim $L^2(\W, L^2(C))=L^2(\W \times C)$ will follow from the fact that $\W$ is $\sigma$-finite and Proposition \ref{prop:simples-are-dense}. To see this, first write  $\W=\bigcup_{n\in\N} \W_n$ where, for every $n\in \N$, $\W_n\subseteq \W$ is of finite $\nu$-measure   and $\W_n\subseteq \W_{n+1}$.

For every $n\in \N$, each simple function in $L^2(\W_n, L^2(C))$
(simple in the sense of vector valued functions) is a $(\nu\times
\mu)$-measurable function from $\W_n \times C$ to $\C$. Therefore,
if $\calD_n=\{\textrm{simple functions of } L^2(\W_n, L^2(C))\}$ we
then have that $\calD_n\subseteq L^2(\W_n \times C)\subseteq
L^2(\W_n, L^2(C))$. Now, since the norm in $L^2(\W_n, L^2(C))$
 restricted  to  $L^2(\W_n \times C)$ is the norm on
$L^2(\W_n \times C)$ and $L^2(\W_n \times C)$ is closed, we conclude
from Proposition \ref{prop:simples-are-dense} that $L^2(\W_n \times
C)= L^2(\W_n, L^2(C))$.

Finally, take $\Phi\in L^2(\W, L^2(C))$ and put $\Phi_n:=\chi_{\W_n}\Phi$. Then, for all $n\in\N$, $\Phi_n\in L^2(\W_n, L^2(C))=L^2(\W_n \times C)$ and in particular, $\Phi_n$ is  a $(\nu\times \mu)$-measurable function on the product space $\W_n\times C$. Thus,  since  $\lim_{n\to\infty}\Phi_n(\w)(x)=\Phi(\w)(x)$ for  $(\nu\times \mu)$-a.e. $(\w;x)\in\W\times C$,  $\Phi$ is  a $(\nu\times \mu)$-measurable function on the product space $\W\times C$ and therefore, $\Phi\in L^2(\W\times C)$.
\end{proof}


\begin{thebibliography}{10}

\bibitem{AG01}
A.~Aldroubi and K.~Gr{\"o}chenig.
\newblock Nonuniform sampling and reconstruction in shift-invariant spaces.
\newblock {\em SIAM Rev.}, 43(4):585--620 (electronic), 2001.

\bibitem{BHP14}
D.~Barbieri, E.~Hern{\'a}ndez, and J.~Parcet.
\newblock Riesz and frame systems generated by unitary actions of discrete groups.
\newblock {\em To appear in Appl. Comput. Harmon. Anal.}, DOI: 10.1016/j.acha.2014.09.007, 2014.

\bibitem{Bog07}
V.~I.~Bogachev.
\newblock {\em Measure theory. {V}ol. {I}, {II}}.
\newblock Springer-Verlag, Berlin, 2007.

\bibitem{Bow00}
M.~Bownik.
\newblock The structure of shift-invariant subspaces of {$L^2({\Bbb R}^n)$}.
\newblock {\em J. Funct. Anal.}, 177(2):282--309, 2000.

\bibitem{BR14}
M.~Bownik and K.~A. Ross.
\newblock The structure of translation-invariant spaces on locally compact
  abelian groups.
\newblock {\em To appear in J. Fourier Anal. Appl.}, DOI: 10.1007/s00041-015-9390-5, 2015.

\bibitem{Bre70}
J.~Brezin.
\newblock Harmonic analysis on manifolds.
\newblock {\em Trans. on Amer. Math. Soc.}, 150:611--618, 1970.

\bibitem{CP10}
C.~Cabrelli and V.~Paternostro.
\newblock Shift-invariant spaces on {LCA} groups.
\newblock {\em J. Funct. Anal.}, 258(6):2034--2059, 2010.

\bibitem{Chr03}
O.~Christensen.
\newblock {\em An introduction to frames and Riesz bases}.
\newblock Birkh\"auser, Boston, Basel, Berlin 2003.

\bibitem{Coh93}
D.~L.~Cohn.
\newblock {\em Measure theory}.
\newblock Birkh\"auser Boston, Inc., Boston, MA, 1993.
\newblock Reprint of the 1980 original.


\bibitem{CMO14}
B.~Currey, A.~Mayeli, and V.~Oussa.
\newblock Characterization of shift-invariant spaces on a class of nilpotent
  Lie groups with applications.
\newblock {\em J. Fourier Anal. Appl.}, 20(2):384--400, 2014.

\bibitem{dBDVR94a}
C.~de Boor, R.~DeVore and  A.~Ron.
\newblock The structure of finitely generated shift-invariant spaces in $L^2(\R^d)$.
\newblock {\em J. Funct. Anal.}, 119(1):37 --78, 1994.

\bibitem{dBDVR94b}
C.~de Boor, R.~DeVore and  A.~Ron.
\newblock Approximation from shift-invariant subspaces of $L^2(\R^d)$.
\newblock {\em Trans. Amer.
Math. Soc.}, 341:787 -- 806, 1994.

\bibitem{DU77}
J.~Diestel and Jr.,J.J.Uhl.
\newblock {\em Vector measures}.
\newblock With a foreword by B. J. Pettis,
              Mathematical Surveys, No. 15,
American Mathematical Society, Providence, R.I.,   xiii+322, 1977.

\bibitem{FG68}
J.~Feldman and F.~P. Greenleaf.
\newblock Existence of {B}orel transversals in groups.
\newblock {\em Pacific J. Math.}, 25:455--461, 1968.

\bibitem{Fol95}
G.~B. Folland.
\newblock {\em A course in abstract harmonic analysis}.
\newblock Studies in Advanced Mathematics. CRC Press, Boca Raton, FL, 1995.

\bibitem{Gel50}
I.M.~Gelfand.
\newblock Eigenfunction expansions for equations with periodic coefficients.
\newblock {\em Dokl. Akad. Nauk. SSR.}, 73:1117--1120, 1950.

\bibitem{Gla01}
I.~Gladkova.
\newblock The Zak transform and a new approach to waveform
design.
\newblock {\em IEEE Transactions on Aerospace and Electronic Systems.}, 37(4):1458--1464, 2001.

\bibitem{Gla04}
I.~Gladkova.
\newblock Design of frequency modulation waveforms via the Zak
transform.
\newblock {\em IEEE Transactions on Aerospace and Electronic Systems.}, 40(1):355--359, 2004.


\bibitem{Gro01}
K.~Gr{\"o}chenig.
\newblock {\em Foundations of time-frequency analysis}.
\newblock Birkh\"auser Boston, Inc., Boston, MA, 2001.

\bibitem{Hel64}
H.~Helson.
\newblock {\em Lectures on invariant subspaces}.
\newblock Academic Press, New York, 1964.

\bibitem{Hel86}
H.~Helson.
\newblock {\em The
Spectral Theorem}.
\newblock Springer-Verlag New York, Inc. New York, USA, 1986.
 
\bibitem{HSWW10}
E.~Hern{\'a}ndez, H.~{\v{S}}iki{\'c}, G.~Weiss, and E.~Wilson.
\newblock Cyclic subspaces for unitary representations of {LCA} groups;
  generalized {Z}ak transform.
\newblock {\em Colloq. Math.}, 118(1):313--332, 2010.

\bibitem{HSWW11}
E.~Hern{\'a}ndez, H.~{\v{S}}iki{\'c}, G.~Weiss, and E.~Wilson.
\newblock The {Z}ak transform(s).
\newblock {\em In Wavelets and Mutiscale Analysis.}, 151--157, Birh\"{a}user/Springer 2011.

\bibitem{HW96}
E.~Hern{\'a}ndez and G.~Weiss.
\newblock {\em A first course on wavelets}.
\newblock Studies in Advanced Mathematics. CRC Press, Boca Raton, FL, 1996.
\newblock With a foreword by Yves Meyer.

\bibitem{HR79}
E.~Hewitt and K.~A. Ross.
\newblock {\em Abstract harmonic analysis. Vol. I: Structure of topological groups, integration theory, group representations.}
\newblock  Second edition. Springer-Verlag, New York-Berlin, 1979.

\bibitem{HR70}
E.~Hewitt and K.~A. Ross.
\newblock {\em Abstract harmonic analysis. Vol. II: Structure and analysis for compact groups.
Analysis on locally compact Abelian groups.}
\newblock  Springer-Verlag, New York-Berlin, 1970.

\bibitem{Ive14} 
J. W. Iverson.
\newblock {\em Subspaces of $L^2(G)$ invariant under translations by an abelian subgroup}.
\newblock {\em J. Funct. Anal.}, 269:865 -- 913, 2015.

\bibitem{Jan82}
 A.~J.~E.~M.~Janssen.
\newblock Bargmann transform, Zak transform, and coherent states.
\newblock {\em J. Math. Phys.}, 23:720--731, 1982.

\bibitem{Jan88}
 A.~J.~E.~M.~Janssen.
\newblock The Zak transform: A signal transform for sampled
time-continuous signals.
\newblock {\em Philips Journal of Research}, 34(1):23--69, 1988.

\bibitem{Jan98}
 A.~J.~E.~M.~Janssen.
\newblock The duality condition for {W}eyl-{H}eisenberg frames.
\newblock In {\em Gabor analysis and algorithms}, Appl. Numer. Harmon. Anal., pages 33--84,
Birkh\"auser Boston, Boston, MA, 1998.

\bibitem{KR10}
R.~A. Kamyabi~Gol and R.~Raisi~Tousi.
\newblock A range function approach to shift-invariant spaces on locally
  compact abelian groups.
\newblock {\em Int. J. Wavelets Multiresolut. Inf. Process.}, 8(1):49--59,
  2010.

\bibitem{Mac52}
G.~W. Mackey.
\newblock Induced representations of locally compact groups. {I}.
\newblock {\em Ann. of Math. (2)}, 55:101--139, 1952.

\bibitem{Mal89}
S.~G. Mallat.
\newblock Multiresolution approximations and wavelet orthonormal bases of
  {$L^2({\Bbb R})$}.
\newblock {\em Trans. Amer. Math. Soc.}, 315(1):69--87, 1989.

   \bibitem{RS80}
M.~Reed and B.~Simon.
\newblock {\em Methods of modern mathematical physics. {I}}.
\newblock  Second edition. Academic Press, Inc. [Harcourt Brace Jovanovich, Publishers],
              New York, 1980.
\bibitem{Rei68}
H.~Reiter.
\newblock {\em Classical harmonic analysis and locally compact groups}.
\newblock Clarendon Press, Oxford, 1968.

  \bibitem{RS95}
A.~Ron  and Z.~Shen.
\newblock Frames and stable bases for shift-invariant subspaces of $L^2(\R^d)$.
\newblock {\em Canad. J. Math.}, 47:1051 -- 1094, 1995.

\bibitem{RS97}
A.~Ron  and  Z.~Shen.
\newblock Affine systems in $L^2(\R^d)$: the analysis of the analysis operator.
\newblock {\em J. Funct. Anal.}, 148:408 -- 447, 1997.

\bibitem{Rud62}
W.~Rudin.
\newblock {\em Fourier analysis on groups}.
\newblock Interscience Tracts in Pure and Applied Mathematics, No. 12.
  Interscience Publishers (a division of John Wiley and Sons), New York-London,
  1962.
  
\bibitem{Sal14} 
S. Saliani.
\newblock {\em Linear independence of translates implies linear independence of affine Parseval frames on LCA groups}. 
\newblock Preprint, 2014. Available at http://arxiv.org/abs/1411.1252



\bibitem{TW14}
T.~Tessera  and H.~Wang,
\newblock Uncertainty principles in finitely generated shift-invariant
              spaces with additional invariance.
\newblock {\em J. Math. Anal. Appl.}, 410(1):134--143, 2014.

\bibitem{Wei64}
A.~Weil,
\newblock Sur certains groupes d'op\'erateurs unitaires.
\newblock {\em Acta Math.}, 111:143--211, 1964.

\bibitem{Wil11}
E.~N.~Wilson,
\newblock The importance of Zak transforms for harmonic analysis.
\newblock {\em Revista de la Uni\'on Matem\'atica Argentina}, 52(2):105--113, 2011.

\bibitem{WZ77}
R.~L. Wheeden and A.~Zygmund.
\newblock {\em Measure and integral.  An introduction to real analysis}.
\newblock  Pure and Applied Mathematics, Vol. 43.
\newblock Marcel Dekker, Inc., New York-Basel, 1977.



\bibitem{Zak67}
J.~Zak,
\newblock Finite translations in solid state physics.
\newblock {\em Phys. Rev. Letter.}, 19:1385--1387, 1967.


\bibitem{Zak68}
J.~Zak,
\newblock The $k_q$-representation in the dynamics in electrons in solids.
\newblock {\em Phys. Rev.}, 168:686--747, 1968.


\end{thebibliography}
\end{document}